\def\rg{\hbox to 30pt{\rightarrowfill}}
\def\lg{\hbox to 30pt{\leftarrowfill}}
          \newtheorem{theorem}{Theorem}[section]
      \newtheorem{proposition}[theorem]{Proposition}
      \newtheorem{corollary}[theorem]{Corollary}
      \newtheorem{lemma}[theorem]{Lemma}
      \newcommand{\BB}{{\mathbb B}}
      \newcommand{\CC}{{\mathbb C}}
      \newcommand{\NN}{{\mathbb N}}
      \newcommand{\FF}{{\mathbb F}}
      \newcommand{\TT}{{\mathbb T}}
      \newcommand{\cA}{{\mathcal A}}
      \newcommand{\cC}{{\mathcal C}}
      \newcommand{\cD}{{\mathcal D}}
      \newcommand{\cE}{{\mathcal E}}
      \newcommand{\cG}{{\mathcal G}}
      \newcommand{\cH}{{\mathcal H}}
      \newcommand{\cK}{{\mathcal K}}
      \newcommand{\cL}{{\mathcal L}}
      \newcommand{\cM}{{\mathcal M}}
      \newcommand{\cN}{{\mathcal N}}
      \newcommand{\cP}{{\mathcal P}}
      \newcommand{\cR}{{\mathcal R}}
      \newcommand{\cU}{{\mathcal U}}
      \newcommand{\rank}{\hbox{\rm{rank}}\,}
      \newdimen\expt
      \def\boxit#1{\setbox0\hbox{$\displaystyle{#1}$}
            \hbox{\lower.4\expt
       \hbox{\lower3\expt\hbox{\lower\dp0
            \hbox{\vbox{\hrule height.4\expt
       \hbox{\vrule width.4\expt\hskip3\expt
            \vbox{\vskip3\expt\box0\vskip2\expt}%
       \hskip3\expt\vrule width.4\expt}\hrule height.4\expt}}}}}}
\begin{document}
       \pagestyle{myheadings}
      \markboth{ Gelu Popescu}{ Unitary invariants  on the unit ball of $B(\cH)^n$ }

      \title [ Unitary invariants for  on the unit ball of $B(\cH)^n$ ]
{ Unitary invariants on the unit ball of $B(\cH)^n$ }
        \author{Gelu Popescu}
\date{February 5, 2012}
      \thanks{Research supported in part by an NSF grant}
      \subjclass[2000]{Primary:   47A45; 47A13    Secondary: 43A65; 47A48}
      \keywords{ Unitary invariant; Row contraction; Characteristic
             function;   Poisson kernel; Automorphism; Projective representation;
         Fock space.
              }

      \address{Department of Mathematics, The University of Texas
      at San Antonio \\ San Antonio, TX 78249, USA}
      \email{\tt gelu.popescu@utsa.edu}

\bigskip

\begin{abstract}
 In this paper, we introduce a
unitary invariant
$$\Gamma:[B(\cH)^n]_1^-\to \NN_\infty\times \NN_\infty\times \NN_\infty,\qquad
  \NN_\infty:=\NN\cup \{ \infty\},
$$
 defined in terms of the characteristic
function $\Theta_T$, the noncommutative Poisson kernel $K_T$, and
the defect operator $\Delta_T$ associated with $T\in
[B(\cH)^n]_1^-$. We show that the map $\Gamma$  detects the pure row
isometries in the closed  unit ball of $B(\cH)^n$ and completely
classify them up to a unitary equivalence. We  also show that
$\Gamma$
 detects
  the pure row contractions with polynomial characteristic
functions and  completely non-coisometric row contractions, while
the pair $(\Gamma, \Theta_T)$
 is a complete unitary invariant for these   classes of
 row contractions.

The unitary invariant $\Gamma$ is extracted from  the theory of
characteristic functions and noncommutative Poisson transforms, and
from the geometric structure of row contractions with polynomial
characteristic functions which are studied in this paper.  As an
application, we characterize the  row contractions  with constant
characteristic function. In particular, we show that any completely non-coisometric
row contraction $T$ with constant characteristic function  is
homogeneous, i.e., $T$ is unitarily equivalent to $\varphi(T)$ for
any free holomorphic automorphism  $\varphi$  of the unit ball of
$B(\cH)^n$.

   Under a natural topology, we prove that the free holomorphic automorphism group
    $\text{\rm Aut}(B(\cH)^n_1)$ is  a metrizable, $\sigma$-compact,  locally compact group,
     and provide  a  concrete unitary projective representation of it in terms
      of noncommutative Poisson kernels.
\end{abstract}

      \maketitle

\bigskip

\bigskip
\bigskip





\bigskip

\section*{Introduction}

An $n$-tuple $T=(T_1,\ldots,T_n)$ of bounded linear operators
is called row contraction if  it belongs to the closed unit ball
 $$
 [B(\cH)^n]_1^-:=\{(X_1,\ldots, X_n)\in B(\cH)^n : \ X_1X_1^*+\cdots+X_nX_n^*\leq I\},
 $$
where $B(\cH)$ is the algebra of bounded linear operators on  a
 Hilbert space $\cH$.
In recent  years, there has been exciting progress in multivariable operator
theory on $[B(\cH)^n]_1^-$,
 especially in connection with dilation  theory and unitary invariants for
 $n$-tuples of operators such as characteristic function, curvature and Euler
  characteristic, entropy,  joint numerical radius and  joint $\rho$-operator
  radius
  (see \cite{Po-charact},
\cite{Po-curvature}, \cite{Po-entropy},  \cite{Po-unitary},
\cite{Po-automorphism} and the references therein).

A central problem in multivariable operator theory is the
classification, up to a unitary equivalence, of $n$-tuples of
operators.
 In this paper, we introduce a new
unitary invariant
$$\Gamma:[B(\cH)^n]_1^-\to \NN_\infty\times \NN_\infty\times \NN_\infty
$$
which is extracted from
from the geometric structure of row contractions with polynomial
characteristic functions and the theory of
  noncommutative Poisson transforms on the unit ball of $B(\cH)^n$.
  We use $\Gamma$ to detect  and classify  certain classes of $n$-tuples of  operators
  in the unit ball of $B(\cH)^n$.

In Section 1,  we  show that a row contraction $T=(T_1,\ldots,T_n)$
has polynomial characteristic function  of degree $m\in
\NN:=\{0,1,\ldots\}$ if and only if  $T_i$ admits a canonical upper
triangular representation
$$
T_i=\left[\begin{matrix}V_i&*&*\\
0&N_i&*\\
0&0&W_i
\end{matrix}\right],\qquad i=1,\ldots,n,
$$
where $(V_1,\ldots, V_n)$ is a pure isometry, $(N_1,\ldots, N_n)$ is
a nilpotent row contraction of order $m$, and $(W_1,\ldots,W_n)$ is
a coisometry. In the particular case when $n=1$ and $T$ is a completely non-unitary
(c.n.u.) contraction, we recover a recent result of Foia\c s and Sarkar \cite{FS}.
The results of Section 1 lead to the definition of the map
$$\Gamma:[B(\cH)^n]_1^-\to \NN_\infty\times \NN_\infty\times \NN_\infty,\qquad  \Gamma(T):=(p,m,q)
$$
 by  setting  $m:=\deg (\Theta_T)$, \  $q:=\dim (\ker K_T)$, and
 $$p:=\begin{cases} \dim (\cD_m\ominus \cD_{m+1})& \quad \text{ if  } m\in \NN\\
 \dim \overline{\Delta_T\cH}& \quad \text{ if }
m=\infty,
\end{cases}$$
  where
 $
 \cD_m:=\overline{\text{\rm span}}\{T_\beta \Delta_Th:\ h\in \cH, |\beta|\geq
m\}, $
 $\Theta_T$ is the characteristic function, $K_T$ is the
noncommutative Poisson kernel, and $\Delta_T$ is
the defect operator  associated with $T\in
[B(\cH)^n]_1^-$.

In Section 2, we show that the map $\Gamma$  detects the pure row
isometries in the closed  unit ball of $B(\cH)^n$ and completely
classify them up to a unitary equivalence. We  also show that
$\Gamma$
 detects
  the pure row contractions with polynomial characteristic
functions and  completely non-coisometric  (c.n.c.) row
contractions, while the pair $(\Gamma, \Theta_T)$
 is a complete unitary invariant for these   classes of
 row contractions.
As an application of the results from Section 1, we prove that
 the characteristic function $\Theta_T$ is a constant if and only if
 $T$
   admits  a canonical upper triangular representation
$$
T_i=\left[\begin{matrix}V_i&*\\
0&W_i
\end{matrix}\right],\qquad i=1,\ldots,n,
$$
where $V:=(V_1,\ldots, V_n)$ is a pure  isometry and
$W:=(W_1,\ldots,W_n)$ is a coisometry.

In Section 3, we prove that  a   c.n.c  row contraction  $T$ is
homogeneous if and only if $\Theta_T\circ \Psi^{-1}$
 coincides with the characteristic function $\Theta_T$ for any
 $\Psi$ in the group
$\text{\rm Aut}(B(\cH)^n_1)$ of   free holomorphic automorphisms of
$[B(\cH)^n]_1$. In particular, we show that any c.n.c row
contraction $T$ with constant characteristic function  is
homogeneous, i.e., $T$ is unitarily equivalent to $\varphi(T)$ for
any  $\varphi\in \text{\rm Aut}(B(\cH)^n_1)$.   Moreover, we show
that
$$
\varphi_i(T)=U_\varphi T_i U_\varphi^*,\qquad i=1,\ldots,n,
$$
  where $U_\varphi$ is a
unitary operator  satisfying  relation $ U_\varphi
U_\psi=c(\varphi, \psi) U_{\varphi\circ \psi}$ for some
  complex number $c(\varphi, \psi)\in \TT$. We remark that in the single
  variable case $(n=1)$ we find again some of the results obtain by Clark, Misra, and Bagchi
  (see \cite{CM}, \cite{BM1}).

The theory of  characteristic functions for row contractions
\cite{Po-charact}
  was used  in \cite{Po-automorphism} to determine the group
$\text{\rm Aut}(B(\cH)^n_1)$ of all free holomorphic automorphisms
of $[B(\cH)^n]_1$. We obtained a characterization
 of the unitarily implemented automorphisms  of the
 Cuntz-Toeplitz algebra $C^*(S_1,\ldots, S_n)$,  which leave invariant
 the noncommutative disc algebra $\cA_n$, in terms of noncommutative Poisson transforms.
  This result provided new insight
  into   Voiculescu's group \cite{Vo} of automorphisms   of the Cuntz-Toeplitz
 algebra and  revealed new  connections  with    noncommutative  multivariable
 operator theory.
Employing some techniques from \cite{Po-automorphism}, we prove
that, with respect to the metric
$$
d_\cE(\phi,\psi):=\|\phi -\psi\|_\infty +
\|\phi^{-1}(0)-\psi^{-1}(0)\|, \qquad \phi,\psi\in  \text{\rm Aut}(B(\cH)^n_1),
$$
  the free holomorphic automorphism group
    $\text{\rm Aut}(B(\cH)^n_1)$ is  a  $\sigma$-compact,  locally compact group,
     and  we provide  a  concrete unitary projective representation of it in terms
      of noncommutative Poisson kernels.

      The author thanks Jaydeb Sarkar for useful discussions on the
      subject of this paper.

\bigskip

\section{   Row contractions  with  polynomial  characteristic functions  }

Let $H_n$ be an $n$-dimensional complex  Hilbert space with
orthonormal
      basis
      $e_1$, $e_2$, $\dots,e_n$, where $n=1,2,\dots$.
       We consider the full Fock space  of $H_n$ defined by
      $$F^2(H_n):=\CC1\oplus \bigoplus_{k\geq 1} H_n^{\otimes k},$$
      where  $H_n^{\otimes k}$ is the (Hilbert)
      tensor product of $k$ copies of $H_n$.
      Define the left  (resp.~right) creation
      operators  $S_i$ (resp.~$R_i$), $i=1,\ldots,n$, acting on $F^2(H_n)$  by
      setting
      $$
       S_i\varphi:=e_i\otimes\varphi, \quad  \varphi\in F^2(H_n),
      $$
       (resp.~$
       R_i\varphi:=\varphi\otimes e_i, \quad  \varphi\in F^2(H_n)$).
        The noncommutative disc algebra $\cA_n$ (resp.~$\cR_n$) is the
norm closed algebra generated by the left (resp.~right) creation
operators and the identity. The   noncommutative analytic Toeplitz
algebra $F_n^\infty$ (resp.~$\cR_n^\infty$)
 is the  weakly
closed version of $\cA_n$ (resp.~$\cR_n$). These algebras were
introduced  (see  \cite{Po-von}, \cite{Po-funct}, \cite{Po-disc}) in
connection with a noncommutative von Neumann  type inequality
\cite{von}.

 Let $\FF_n^+$ be the unital free semigroup on $n$ generators
$g_1,\ldots, g_n$ and the identity $g_0$.  The length of $\alpha\in
\FF_n^+$ is defined by $|\alpha|:=0$ if $\alpha=g_0$ and
$|\alpha|:=k$ if
 $\alpha=g_{i_1}\cdots g_{i_k}$, where $i_1,\ldots, i_k\in \{1,\ldots, n\}$.
If $(X_1,\ldots, X_n)\in B(\cH)^n$, where $B(\cH)$ is the algebra of
all bounded linear operators on the Hilbert space $\cH$,    we set
$X_\alpha:= X_{i_1}\cdots X_{i_k}$  and $X_{g_0}:=I_\cH$. We denote
$e_\alpha:= e_{i_1}\otimes\cdots \otimes  e_{i_k}$ and $e_{g_0}:=1$.

  We recall   (\cite{Po-charact},
      \cite{Po-analytic})
       a few facts
       concerning multi-analytic   operators on Fock
      spaces.
         We say that
       a bounded linear
        operator
      $M$ acting from $F^2(H_n)\otimes \cK$ to $ F^2(H_n)\otimes \cK'$ is
       multi-analytic with respect to $S_1,\ldots, S_n$
      if
      \begin{equation*}
      M(S_i\otimes I_\cK)= (S_i\otimes I_{\cK'}) M, \qquad
        i=1,\dots, n.
      \end{equation*}
       We can associate with $M$ a unique formal Fourier expansion
      \begin{equation*}       M(R_1,\ldots, R_n):= \sum_{\alpha \in \FF_n^+}
      R_\alpha \otimes \theta_{(\alpha)}, \end{equation*}
where $\theta_{(\alpha)}\in B(\cK, \cK')$.
       We  know  that
        $$M =\text{\rm SOT-}\lim_{r\to 1}\sum_{k=0}^\infty
      \sum_{|\alpha|=k}
         r^{|\alpha|} R_\alpha\otimes \theta_{(\alpha)},
         $$
         where, for each $r\in [0,1)$, the series converges in the operator norm.
      Moreover, the set of  all multi-analytic operators in
      $B(F^2(H_n)\otimes \cK,
      F^2(H_n)\otimes \cK')$  coincides  with
      $R_n^\infty\bar \otimes B(\cK,\cK')$,
      the WOT-closed operator space generated by the spatial tensor
      product.
A multi-analytic operator is called inner if it is an isometry. We
remark that  similar results are valid  for  multi-analytic
operators with respect to the right creation operators $R_1,\ldots,
R_n$.

 According to \cite{Po-holomorphic}, a map $F:[B(\cH)^n]_{1}\to
  B( \cH)\bar\otimes_{min} B(\cE, \cG)$ is called
  {\it free
holomorphic function} on  $[B(\cH)^n]_{\gamma}$, $\gamma>0$,  with coefficients
in $B(\cE, \cG)$ if there exist $A_{(\alpha)}\in B(\cE, \cG)$,
$\alpha\in \FF_n^+$, such that
$$
F(X_1,\ldots, X_n)=\sum\limits_{k=0}^\infty \sum\limits_{|\alpha|=k}
X_\alpha\otimes  A_{(\alpha)},
$$
where the series converges in the operator  norm topology  for any
$(X_1,\ldots, X_n)\in [B(\cH)^n]_{\gamma}$, where
$$
 [B(\cH)^n]_\gamma:=\{(X_1,\ldots, X_n)\in B(\cH)^n : \ \|X_1X_1^*+\cdots+X_nX_n^*\|^{1/2} < \gamma \},
 $$

For simplicity, throughout this paper, $[X_1,\ldots,X_n]$ denotes
either the $n$-tuple $(X_1,\ldots,X_n)\in B(\cH)^n$ or the  operator
row matrix $[X_1\, \cdots\, X_n]$ acting from $\cH^{(n)}$, the
direct sum of $n$ copies of a Hilbert space $\cH$, to $\cH$. The
{\it characteristic function} associated with an arbitrary row
contraction $T:=[T_1,\ldots, T_n]$, \ $T_i\in B(\cH)$, was
introduced in \cite{Po-charact} (see \cite{SzFBK-book} for the
classical case $n=1$) and it was proved to be  a complete unitary
invariant for completely non-coisometric   row contractions. The
characteristic function  of $T$ is a multi-analytic operator with
respect to $S_1,\ldots, S_n$,
$$
\tilde{\Theta}_T:F^2(H_n)\otimes \cD_{T^*}\to F^2(H_n)\otimes \cD_T,
$$
with the formal Fourier representation
\begin{equation*}
\begin{split}
 \Theta_T(R_1,\ldots, R_n):= -I_{F^2(H_n)}\otimes T|_{\cD_{T^*}}+
\left(I_{F^2(H_n)}\otimes \Delta_T\right)&\left(I_{F^2(H_n)\otimes
\cK}
-\sum_{i=1}^n R_i\otimes T_i^*\right)^{-1}\\
&\left[R_1\otimes I_\cK,\ldots, R_n\otimes I_\cK \right]
\left(I_{F^2(H_n)}\otimes \Delta_{T^*}|_{\cD_{T^*}}\right),
\end{split}
\end{equation*}
where $R_1,\ldots, R_n$ are the right creation operators on the full
Fock space $F^2(H_n)$.
 Here,  we need to clarify some notations since some of them are different
 from those considered in \cite{Po-charact}.
The defect operators  associated with a row contraction
$T:=[T_1,\ldots, T_n]$ are
\begin{equation*}
 \Delta_T:=\left( I_\cH-\sum_{i=1}^n
T_iT_i^*\right)^{1/2}\in B(\cH) \quad \text{ and }\quad
\Delta_{T^*}:=(I-T^*T)^{1/2}\in B(\cH^{(n)}),
\end{equation*}
while the defect spaces are $\cD_T:=\overline{\Delta_T\cH}$ and
$\cD_{T^*}:=\overline{\Delta_{T^*}\cH^{(n)}}$.
Using the $F_n^\infty$-functional calculus  for row contractions \cite{Po-funct},
one can define
$$
\Theta_T(X_1,\ldots, X_n):=\text{\rm SOT-}\lim_{r\to 1}\Theta_T
(rX_1,\ldots, rX_n)
$$
for any c.n.c.  row contraction $(X_1,\ldots, X_n)\in
[B(\cG)^n]_1^-$, where $\cG$ is a Hilbert space. Depending on $T$,
the map $\Theta_T$ may be well-defined on a larger subset of
$B(\cG)^n$. For example, if $\|T\|<1$, then $X\mapsto \Theta_T (X)$
is  a free holomorphic function on the open ball
$[B(\cG)^n]_{\gamma}$, where $\gamma:=\frac{1}{\|T\|}$. Therefore,
the characteristic function  $\tilde \Theta_T$ generates a bounded
free holomorphic function $\Theta_T$ (also called characteristic
function) on $[B(\cG)^n]_1$ with operator-valued coefficients in
$B(\cD_{T^*}, \cD_T)$. Note also that
\begin{equation*}
\begin{split}
 \Theta_T(X_1,\ldots, X_n)= -I_{\cG}\otimes (T|_{\cD_{T^*}})+
\left(I_\cG \otimes \Delta_T\right)&\left(I_{\cG\otimes
\cK}-\sum_{i=1}^n X_i\otimes
T_i^*\right)^{-1}\\
&\left[X_1\otimes I_\cK,\ldots, X_n\otimes I_\cK \right]
\left(I_\cG\otimes \Delta_{T^*}|_{\cD_{T^*}}\right)
\end{split}
\end{equation*}
for any $(X_1,\ldots, X_n)\in [B(\cG)^n]_1$.  The characteristic
function $\tilde{\Theta}_T$ is the model boundary function of $\Theta_T$
with respect to $R_1,\ldots, R_n$ in the sense that
$$\tilde{\Theta}_T=\text{\rm SOT-}\lim_{r\to 1}\Theta_T(rR_1,\ldots,
rR_n),$$
 where  $\Theta(rR_1,\ldots, rR_n)$ is
in $\cR_n\otimes_{min} B(\cK)$ for any $r\in [0,1)$.

Let $T:=[T_1,\ldots, T_n]$  be a row contraction with $T_i\in
B(\cH)$ and consider the subspace $\cH_c\subseteq \cH$ defined by
\begin{equation} \label{hc}
 \cH_c:=\left\{h\in \cH:\
\sum_{|\alpha|=k} \|T_\alpha^*h\|^2=\|h\|^2 \text{ for any }
k=1,2,\ldots\right\}.
\end{equation}
We call $T$  a {\it completely non-coisometric} (c.n.c.)  row
contraction if $\cH_c=\{0\}$. We proved in \cite{Po-isometric} that
$\cH_c$ is a joint invariant subspace under the operators
$T_1^*,\ldots, T_n^*$, and it is  also the largest subspace in $\cH$
on which $T^*$  acts isometrically. Consequently, we have the
following triangulation with respect to the decomposition
$\cH=\cH_c\oplus \cH_{cnc}$:
$$
T_i=\left(\begin{matrix}A_i&0\\
*&B_i
\end{matrix}\right),\qquad i=1,\ldots,n,
$$
where $*$ stands for an unspecified  entry, $[A_1,\ldots, A_n]$ is a
coisometry, i.e., $A_1 A_1^*+\cdots +A_n A_n^*=I_{\cH_c}$, and
$[B_1,\ldots, B_n]$ is a c.n.c. row contraction. We say that a row
contraction $T$ is {\it pure} if
$$\lim_{k\to\infty}\sum_{\gamma\in \FF_n^+, |\gamma|=k}\|T_{\gamma}^*h\|^2=0,\qquad
h\in \cH.
$$
An $n$-tuple $N:=(N_1,\ldots, N_n)\in B(\cH)^n$ is called {\it
nilpotent} if there is    $m\in \NN$ such that   $N_\alpha=0$ for
all $\alpha\in \FF_n^+$ with $|\alpha|=m$. The order  of a nilpotent
$n$-tuple $N$ is the smallest $m\in \NN$ with the above-mentioned
property.
 Throughout this paper, we make the convention that de degree of a constant polynomial (including the zero polynomial) is zero.
\begin{theorem}
\label{structure} Let $T:=[T_1,\ldots, T_n]\in [B(\cH)^n]_1^-$ be a
 row contraction such that the characteristic  function
$\Theta_T$ is a noncommutative polynomial of degree $m\in \NN$. Then
there exist subspaces  $\cH_v$, $\cH_{nil}$, and $\cH_c$ of $\cH$  such that
$\cH=\cH_v\oplus \cH_{nil}\oplus \cH_c$ and each $T_i$ admits a representation
$$
T_i=\left[\begin{matrix}V_i&*&*\\
0&N_i&*\\
0&0&W_i
\end{matrix}\right],\qquad i=1,\ldots,n,
$$
where $[V_1,\ldots, V_n]\in [B(\cH_v)^n]_1^-$ is a pure isometry,
 $[N_1,\ldots, N_n]\in [B(\cH_{nil})^n]_1^-$ is a nilpotent row
contraction of order $\leq m$, and $[W_1,\ldots,W_n]\in [B(\cH_c)^n]_1^-$
is a coisometry. Moreover, if $m=0$, then $\cH_{nil}=\{0\}$ and
$T_i$ admits the representation
$$
T_i=\left[\begin{matrix}V_i&*\\
0&W_i
\end{matrix}\right],\qquad i=1,\ldots,n,
$$
with respect to the decomposition $\cH=\cH_v\oplus \cH_c$.
\end{theorem}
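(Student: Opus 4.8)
The plan is to exhibit the three subspaces explicitly and then read off all the asserted properties. Take $\cH_c$ to be the subspace defined in \eqref{hc}, $\cH_v:=\cD_m$ in the notation of the Introduction, and $\cH_{nil}:=\cH\ominus(\cD_m\oplus\cH_c)$. First I would record the elementary facts that make this a genuine orthogonal decomposition compatible with the claimed triangular form. Iterating the identity $\|x\|^2=\|\Delta_Tx\|^2+\sum_{i=1}^n\|T_i^*x\|^2$ along $\FF_n^+$ shows that $\ker K_T$ — which coincides with $(\cD_0)^\perp$ directly from the definition of the Poisson kernel — equals $\cH_c$; hence $\cD_0=\cH_c^\perp=\cH_{cnc}$. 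Since the $\cD_k$ decrease, $\cD_m\subseteq\cD_0$, so $\cD_m\perp\cH_c$, and from the definition of $\cH_{nil}$ also $\cH_{nil}\subseteq\cH_c^\perp=\cH_{cnc}$. Moreover each $\cD_k$ is invariant under $T_1,\dots,T_n$ with $T_i\cD_k\subseteq\cD_{k+1}$, so $\cH_v=\cD_m$ is $T$-invariant, and $\cH_c$ is $T^*$-invariant by \cite{Po-isometric}, so $\cH_{cnc}$ is $T$-invariant. It follows that $T$ is block upper triangular with respect to $\cH=\cH_v\oplus\cH_{nil}\oplus\cH_c$, the three subdiagonal blocks vanishing because $T_i\cH_v\subseteq\cH_v$, $\cH_{nil}\subseteq\cH_{cnc}$, and $T_i\cH_{cnc}\subseteq\cH_{cnc}$; the lower-right corner is the coisometry $[A_1,\dots,A_n]$ of the triangulation recalled above, so $W$ is as claimed.

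The substantive point is that $V:=[T_1|_{\cH_v},\dots,T_n|_{\cH_v}]$ is a \emph{pure} row isometry on $\cH_v=\cD_m$, and for the row-isometry property I would use the Fourier expansion of $\Theta_T$. Write $D_j\in B(\cD_{T^*},\cH)$ for the $j$-th entry of the column operator $\Delta_{T^*}|_{\cD_{T^*}}\colon\cD_{T^*}\to\cH^{(n)}$. A direct computation from the displayed formula for $\Theta_T(R_1,\dots,R_n)$ gives $\theta_{(g_{i_1}\cdots g_{i_k})}=\Delta_TT_{i_1}^*\cdots T_{i_{k-1}}^*D_{i_k}$ for $k\geq1$; hence the hypothesis $\theta_{(\alpha)}=0$ for $|\alpha|>m$ is precisely the assertion that $\Delta_TT_\gamma^*D_j=0$ for all $\gamma\in\FF_n^+$ with $|\gamma|\geq m$ and all $j$. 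On the other hand, $(\Delta_{T^*}|_{\cD_{T^*}})(\Delta_{T^*}|_{\cD_{T^*}})^*=\Delta_{T^*}^2=I_{\cH^{(n)}}-T^*T$ gives the entrywise factorization $D_iD_j^*=\delta_{ij}I-T_i^*T_j$. Combining these two facts, for all $i,j$ and all $\beta,\gamma$ with $|\beta|,|\gamma|\geq m$,
$$\Delta_TT_\beta^*\,(\delta_{ij}I-T_i^*T_j)\,T_\gamma\Delta_T=(\Delta_TT_\beta^*D_i)(\Delta_TT_\gamma^*D_j)^*=0.$$
Since $\cD_m$ is the closed linear span of the vectors $T_\beta\Delta_Th$ with $|\beta|\geq m$, this forces $\langle(\delta_{ij}I-T_i^*T_j)x,y\rangle=0$ for all $x,y\in\cD_m$ and all $i,j$, i.e.\ $\langle T_ix,T_jy\rangle=\delta_{ij}\langle x,y\rangle$; as $\cD_m$ is $T$-invariant, this says exactly that $V$ is a row isometry on $\cH_v$.

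To see $V$ is pure I would use the Wold decomposition of the row isometry $V$, which writes $\cH_v$ as the orthogonal sum of a subspace on which $V$ is a multiple of the pure shift and a reducing subspace $\cR$ on which $\sum_i(V_i|_\cR)(V_i|_\cR)^*=I_\cR$. For $x\in\cR$ one has $(V_i|_\cR)^*x=P_{\cH_v}T_i^*x$, and $\sum_i\|(V_i|_\cR)^*x\|^2=\|x\|^2$ together with $\sum_i\|T_i^*x\|^2\leq\|x\|^2$ forces $T_i^*x=P_{\cH_v}T_i^*x\in\cR$ and $\sum_i\|T_i^*x\|^2=\|x\|^2$; iterating, $\cR$ is $T^*$-invariant and $\sum_{|\alpha|=k}\|T_\alpha^*x\|^2=\|x\|^2$ for every $k$, so $\cR\subseteq\cH_c$. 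But $\cR\subseteq\cH_v=\cD_m\perp\cH_c$, whence $\cR=\{0\}$ and $V$ is pure. For the middle block, observe that every $x\in\cH_{nil}\subseteq\cH_{cnc}=\cD_0$ is a limit of finite sums of vectors $T_\beta\Delta_Th$, so $T_\alpha x\in\cD_{|\alpha|}\subseteq\cD_m$ whenever $|\alpha|\geq m$; in particular $T_\alpha\cH_{nil}\subseteq\cD_m$, and $\cD_m\perp\cH_{nil}$. Since $\cH_{nil}=(\cD_m)^\perp\ominus\cH_c$ is the orthogonal difference of two nested $T^*$-invariant subspaces, it is semi-invariant, so $N_\alpha=P_{\cH_{nil}}T_\alpha|_{\cH_{nil}}$ for every $\alpha\in\FF_n^+$; combining, $N_\alpha=0$ for $|\alpha|=m$, i.e.\ $N:=[N_1,\dots,N_n]$ is nilpotent of order $\leq m$. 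Finally, when $m=0$ one has $\cD_0=\cH_{cnc}$, so $\cH_{nil}=\{0\}$ and the $2\times2$ form results.

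The one genuinely delicate step is the row-isometry claim for $V$: this is where the polynomiality of $\Theta_T$ is actually used, via its explicit Fourier coefficients and the factorization $\delta_{ij}I-T_i^*T_j=D_iD_j^*$. The only care needed is bookkeeping of the word-reversal convention in the expansion of $\Theta_T(R_1,\dots,R_n)$; this is harmless, since one only needs the coefficients to vanish for \emph{all} words of length exceeding $m$.
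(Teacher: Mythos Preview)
Your proof is correct and follows essentially the same route as the paper's: the same three subspaces $\cH_v=\cD_m$, $\cH_{nil}=\cD_0\ominus\cD_m$, $\cH_c$, the same Wold argument for purity, and the same use of the vanishing Fourier coefficients of $\Theta_T$ to force $V$ to be a row isometry. Your packaging is slightly slicker in two places—identifying $\cH_c=(\cD_0)^\perp=\ker K_T$ at the outset via the telescoping identity (the paper instead proves $\cH\ominus\cM=\cH_c$ by two inclusions near the end), and writing the key vanishing as $(\Delta_T T_\beta^*D_i)(\Delta_T T_\gamma^*D_j)^*=0$ with $D_iD_j^*=\delta_{ij}I-T_i^*T_j$ rather than expanding $\Delta_{T^*}^2 J_i T_\beta\Delta_T$ entry by entry—but these are the same computations reorganized, not a different argument.
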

\begin{proof}

The characteristic function
  $\Theta_T: [B(\cG)^n]_1
\to  B( \cG)\bar\otimes_{min} B(\cD_{T^*}, \cD_T)$ is a bounded free holomorphic function
 given by
$$
\Theta_T(X_1,\ldots, X_n)= -I_{\cG}\otimes
(T|_{\cD_{T^*}})+\sum_{i=1}^n \sum_{k=0}^\infty \sum_{|\alpha|=k}
X_\alpha X_i  \otimes \Delta_{T}(T_{\widetilde\alpha})^*P_i
\Delta_{T^*}|_{\cD_{T^*}}
$$
for  $X=(X_1,\ldots, X_n)\in [B(\cG)^n]_1$,
where  the convergence is in the operator norm  and $P_i$ denotes
the orthogonal projection of $ \cH^{(n)}$ onto the $i$-component of
$\cH^{(n)}$.   Assume that $\Theta_T$ is a noncommutative polynomial
of degree $m\in \NN=\{0,1,\ldots\}$. Then we have
$\Delta_T(T_\beta)^*P_i\Delta_{T^*}=0$ for all $\beta\in \FF_n^+$
with $|\beta|\geq m$ and $i=1,\ldots,n$. Hence, we deduce that
\begin{equation}
\label{Ji} \Delta_{T^*}^2J_iT_\beta \Delta_T=0,\qquad |\beta|\geq m,
\ i=1,\ldots,n,
\end{equation}
where $J_i:\cH\to \cH^{(n)}$ is the injection $J_i h:= \oplus_{j=1}^n\delta_{ji}h$.
Define the subspace
$$
\cH_v:=\overline{\text{\rm span}}\{T_\beta h:\ h\in \cD_T, |\beta|\geq m\}
$$
and note that it is invariant under each operator $T_1,\ldots, T_n$.
In what follows,  we show that  the $n$-tuple
$[T_1|_{\cH_v},\ldots,T_n|_{\cH_v}]\in [B(\cH_v)^n]_1^-$ is an
isometry. Note that if $h\in \cH$, $|\beta|\geq m$, and
$i=1,\ldots,n$, then relation \eqref{Ji} implies
\begin{equation*}
\Delta_{T^*}^2J_iT_\beta \Delta_Th=
\left[\begin{matrix}I-T_1^*T_1 &-T_1^*T_2&\cdots &-T_1^*T_n\\
-T_2^* T_1&I-T_2^*T_2&\cdots &-T_2^*T_n\\
\vdots&\vdots&\vdots&\vdots\\
-T_i^*T_1&-T_i^*T_2& I-T_i^*T_i& -T_i^*T_n\\
\vdots&\vdots&\vdots&\vdots\\
-T_n^*T_1&-T_n^*T_2&\cdots &I-T_n^*T_n
\end{matrix}\right]
\left[\begin{matrix}
0\\0\\\vdots\\ T_\beta \Delta_Th\\\vdots\\0
\end{matrix}\right]
=
\left[\begin{matrix}
-T_1^*T_iT_\beta \Delta h\\
-T_2^*T_iT_\beta \Delta h\\\vdots\\ (I-T_i^*T_i)T_\beta \Delta_Th\\\vdots\\-T_n^*T_iT_\beta \Delta_Th
\end{matrix}\right]=0.
\end{equation*}
Consequently, we have
$$T_j^*T_iT_\beta \Delta_Th=0,\qquad   i,j\in \{1,\ldots,n\}, i\neq j \ \text { and } |\beta|\geq m,
$$
 and
$$(I-T_i^*T_i)T_\beta \Delta_Th=0,\qquad i\in \{1,\ldots,n\}.
$$
Hence, we deduce that $T_i(\cH_v)\perp T_j(\cH_v)$ if $i\neq j$ and
$\|T_ix\|=\|x\|$ for any $x\in \cH_v$. Therefore,  the $n$-tuple
$[T_1|_{\cH_v},\ldots,T_n|_{\cH_v}]\in [B(\cH_v)^n]_1^-$ is an
isometry. Set $V_i:=T_i|_{\cH_v}:\cH_v\to \cH_v$ for $i=1,\ldots,n$.
According to the Wold decomposition for isometries with orthogonal
ranges  (see \cite{Po-isometric}), there is a unique orthogonal
decomposition $\cH_v=\cH_s\oplus \cH_u$ such that $\cH_u$ and
$\cH_s$ are reducing subspaces under $V_1,\ldots V_n$, the $n$-tuple
$[V_1|_{\cH_s},\ldots, V_n|_{\cH_s}]$ is a pure row isometry and
$[V_1|_{\cH_u},\ldots, V_n|_{\cH_u}]$ is a Cuntz isometry, i.e.,
$\sum_{i=1}^n (V_i|_{\cH_u})(V_i|_{\cH_u})^*=I_{\cH_u}$. Moreover,
we have
$$\cH_u=\{h\in \cH_v:\ \sum_{|\alpha|=k} \|V_\alpha^*h\|^2=\|h\|^2 \
\text{ for all } \ k\in \NN\}.
$$
Note that,  since $T=[T_1,\ldots, T_n]$ is a row contraction, if
$h\in \cH_u$, then
$$
\|h\|^2=\sum_{|\alpha|=k} \|V_\alpha^*h\|^2=\sum_{|\alpha|=k}
\|P_{\cH_v} T_\alpha^*h\|^2\leq \sum_{|\alpha|=k}
\|T_\alpha^*h\|^2\leq \|h\|^2
$$
for any $k\in \NN$. Consequently, $\sum_{|\alpha|=k}
\|T_\alpha^*h\|^2= \|h\|^2$ for $k\in \NN$, which proves that $h\in
\cH_c$. Therefore, we have $\cH_u\subseteq \cH_c$, where $\cH_c$ is
given by relation \eqref{hc}.

Define the subspaces
$$
\cM:=\overline{\text{\rm span}}\{T_\alpha h: \ h\in \cD_T, \alpha\in
\FF_n^+\}
$$
and $\cH_{nil}:=\cM\ominus \cH_v$, and  let $[N_1,\ldots, N_n]\in
B(\cH_{nil})^n$ be the $n$-tuple of operators given by
$N_i:=P_{\cH_{nil}}T_i|_{\cH_{nil}}$ for $i=1,\ldots,n$. Since
$$\sum_{i=1}^n N_i N_i^*\leq
 P_{\cH_{nil}}\left(\sum_{i=1}^n T_iT_i^*\right)|_{\cH_{nil}}\leq
 I_{\cH_{nil}},
 $$ we deduce that $[N_1,\ldots, N_n]\in
 [B(\cH_{nil})^n]_1^-$. Note that if $m=0$, then $\cH_{nil}=\{0\}$.
 On the other hand, since $\cM$ and $\cH_v$ are invariant subspaces
 under each operator $T_1,\ldots, T_n$, the subspace $\cH_{nil}$ is
 semi-invariant under the same operators and, consequently,
 $N_\alpha=P_{\cH_{nil}}T_\alpha|_{\cH_{nil}}$ for all $\alpha\in
 \FF_n^+$.
Note that, due to the fact that $T_\beta\cM\subseteq \cH_v$ for all
$\beta\in \FF_n^+$ with $|\beta|\geq m$, we have  $N_\beta=0$ for
$|\beta|\geq m$. Therefore, $N$ is a nilpotent row contraction of
order $\leq m$, and
$$T_i|_{\cH_v\oplus \cH_{nil}}=
 \left[\begin{matrix}V_i&*\\
0&N_i\\
\end{matrix}\right],\qquad i=1,\ldots,n,
$$
where $V_i:=T_i|_{\cH_v}:\cH_v\to \cH_v$.

Now, let $\cH_3:=\cH\ominus \cM$ and define
$W_i:=P_{\cH_3}T_i|_{\cH_3}$ for $i=1,\ldots,n$. Note that a vector
$h\in \cH$ is in $\cH_3$ if and only if $h\perp T_\alpha \Delta_Tx$
for all $x\in \cH$ and $\alpha\in \FF_n^+$, which is equivalent to
$\Delta_T T_\alpha^* h=0$ for all $\alpha\in \FF_n^+$. Consequently,
$h\in \cH_3$ if and only if
$$
(I-T_1T_1^*-\cdots-T_nT_n^*)T_\alpha^*h=0,\qquad \alpha\in \FF_n^+.
$$
Therefore, if $h\in \cH_3$, then one can prove by induction over $k\in\NN$
that
\begin{equation*}
\begin{split}
\|h\|^2&=\sum_{i=1}^n\left<T_iT_i^*h,h\right>=\sum_{|\alpha|=2}
\left<T_\alpha T_\alpha^*h,h\right> =\cdots=\sum_{|\alpha|=k}
\left<T_\alpha T_\alpha^*h,h\right>
\end{split}
\end{equation*}
for all $k\in \NN$. This shows that
$$
\cH_3\subseteq \cH_c:=\left\{h\in \cH:\ \sum_{|\alpha|=k}
\|T_\alpha^*h\|^2=\|h\|^2 \text{ for any } k=1,2,\ldots\right\}.
$$
We prove now the reverse inclusion. Since $T_i^*\cH_c\subseteq
\cH_c$ for $i=1,\ldots,n$, for any $h\in \cH_c$ and $\beta\in
\FF_n^+$, we deduce that
$$
\sum_{|\alpha|=k} \|T_\alpha^*T_\beta^* h\|^2=\|T_\beta^*h\|^2,
\qquad    k=1,2,\ldots.
$$
In particular, we have  $\left<T_\beta (I-\sum_{i=1}^n
T_iT_i^*)T_\beta^*h,h\right>=0$, whence $\Delta_T T_\beta^*h=0$ for
all $\beta\in \FF_n^+$. Therefore, $h\in \cH_3$, which completes the
proof of the fact that $\cH_3=\cH_c$, the largest  co-invariant
subspace   under
$T_1,\ldots T_n$  such that  $\left[\begin{matrix}T_1^*|_{\cH_c}\\
\vdots\\
T_n^*|_{\cH_c}
\end{matrix}\right] $ is an isometry.  This implies  that $\sum_{i=1}^n W_iW_i^*=I_{\cH_c}$.
We have  also seen that $\cH_u\subseteq \cH_c=\cH_3:=\cH\ominus \cM$ and $\cH_u\subseteq \cH_v\subseteq \cM$. Consequently, $\cH_u=\{0\}$ and $T_i$ has the representation

$$
T_i=\left[\begin{matrix}V_i&*&*\\
0&N_i&*\\
0&0&W_i
\end{matrix}\right],\qquad i=1,\ldots,n,
$$
where $V$, $N$, and $W$ are $n$-tuples of operators with the
required properties.  If $m=0$, then
$T_i$ admits the representation
$$
T_i=\left[\begin{matrix}V_i&*\\
0&W_i
\end{matrix}\right],\qquad i=1,\ldots,n,
$$
with respect to the decomposition $\cH=\cH_v\oplus \cH_c$.
The proof is complete.
\end{proof}

\begin{theorem} \label{VNW} Let $\cH_0$, $\cH_1$, and $\cH_2$  be Hilbert spaces and let $V$, $N$,
 and $W$ be $n$-tuples of operators with the following properties:
\begin{enumerate}
\item[(i)]
 $V:=[V_1,\ldots, V_n]\in
[B(\cH_0)^n]_1^-$ is  an isometry;
\item[(ii)]  $N:=[N_1,\ldots,
N_n]\in [B(\cH_1)^n]_1^-$ is  a nilpotent row contraction of order
$m\in \NN$ with $\cH_1=\{0\}$ if $m=0$;
  \item[(iii)]  $W:=[W_1,\ldots,W_n]\in [B(\cH_2)^n]_1^-$ is  a coisometry.
\end{enumerate}
     Then the following statements hold.
     \begin{enumerate}
     \item[(a)] If $m\geq 1$, then
     the characteristic function  of  any
row contraction $[T_1,\ldots,T_n]\in [B(\cH)^n]_1^-$  of the form
$$
T_i=\left[\begin{matrix}V_i&*&*\\
0&N_i&*\\
0&0&W_i
\end{matrix}\right],\qquad i=1,\ldots,n,
$$
with respect to the decomposition  $\cH=\cH_0\oplus \cH_1\oplus \cH_2$,
 is a polynomial of
degree $\leq m$.
\item[(b)] If $m=0$, then  the characteristic function  of  any
row contraction $[T_1,\ldots,T_n]\in [B(\cH)^n]_1^-$  of the form
$$
T_i=\left[\begin{matrix}V_i&*\\
0&W_i
\end{matrix}\right],\qquad i=1,\ldots,n,
$$
with respect to the decomposition  $\cH=\cH_0\oplus \cH_2$,
 is a  polynomial of
degree zero.
\end{enumerate}
\end{theorem}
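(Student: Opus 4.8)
The plan is to show that all Fourier coefficients of $\Theta_T$ carried by words of length $>m$ vanish. Recall from the proof of Theorem~\ref{structure} that every row contraction $T$ admits the power series expansion
$$
\Theta_T(X_1,\ldots,X_n)=-I_\cG\otimes(T|_{\cD_{T^*}})+\sum_{i=1}^n\sum_{k=0}^\infty\sum_{|\alpha|=k}X_\alpha X_i\otimes\Delta_T(T_{\widetilde\alpha})^*P_i\Delta_{T^*}|_{\cD_{T^*}},
$$
where the monomial $X_\alpha X_i$ has length $|\alpha|+1$. Hence $\Theta_T$ is a polynomial of degree $\le m$ if and only if $\Delta_T(T_{\widetilde\alpha})^*P_i\Delta_{T^*}|_{\cD_{T^*}}=0$ for all $i=1,\ldots,n$ and all $\alpha\in\FF_n^+$ with $|\alpha|\ge m$; since $\alpha\mapsto\widetilde\alpha$ permutes the words of any fixed length, this is the same as $\Delta_T T_\beta^*P_i\Delta_{T^*}|_{\cD_{T^*}}=0$ for $|\beta|\ge m$, and, using $\cD_{T^*}=\overline{\ran\Delta_{T^*}}$ and passing to adjoints, it is equivalent to
$$
\Delta_{T^*}^2 J_i T_\beta\Delta_T=0,\qquad i=1,\ldots,n,\quad |\beta|\ge m,
$$
i.e.\ exactly the relations \eqref{Ji} met in the proof of Theorem~\ref{structure}, which here we must establish rather than assume. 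In case (b), where $\cH_1=\{0\}$, the same reduction shows it suffices to prove these relations for \emph{every} $\beta\in\FF_n^+$ (including $\beta=g_0$), which annihilates every Fourier coefficient of positive length and leaves only the constant term, a polynomial of degree zero.

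To produce these relations I would record three structural facts. (1) Since $[W_1,\ldots,W_n]$ is a coisometry, for $h_2\in\cH_2$ one has $T_i^*h_2=0\oplus0\oplus W_i^*h_2$, hence $\sum_{i=1}^n\|T_i^*h_2\|^2=\sum_{i=1}^n\|W_i^*h_2\|^2=\|h_2\|^2$; therefore $\cH_2\subseteq\ker\Delta_T$ and so $\cD_T=\overline{\Delta_T\cH}\subseteq\cH_0\oplus\cH_1$. (2) The subspace $\cH_0\oplus\cH_1$ is invariant under each $T_i$, which acts on it as the upper triangular operator matrix $\left[\begin{smallmatrix}V_i&B_i\\0&N_i\end{smallmatrix}\right]$ for a suitable $B_i\in B(\cH_1,\cH_0)$; multiplying these out gives $T_\beta|_{\cH_0\oplus\cH_1}=\left[\begin{smallmatrix}V_\beta&Y_\beta\\0&N_\beta\end{smallmatrix}\right]$ for some $Y_\beta\in B(\cH_1,\cH_0)$, and since $[N_1,\ldots,N_n]$ is nilpotent of order $m$ we have $N_\beta=0$ for all $\beta$ with $|\beta|\ge m$. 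Thus $T_\beta(\cH_0\oplus\cH_1)\subseteq\cH_0$ whenever $|\beta|\ge m$, and together with (1) this yields the key point: $T_\beta\Delta_Th\in\cH_0$ for all $h\in\cH$ and all $\beta$ with $|\beta|\ge m$ (in case (b), for every $\beta$, since then $\cH_1=\{0\}$, $\cH_0$ is invariant and $\cD_T\subseteq\cH_0$). (3) $T_i|_{\cH_0}=V_i$ is an isometry, because the hypothesis that $[V_1,\ldots,V_n]$ is an isometry forces $V_i^*V_i=I_{\cH_0}$.

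With these in hand the proof concludes quickly. Fix $\beta$ with $|\beta|\ge m$ and $h\in\cH$, set $z:=T_\beta\Delta_Th\in\cH_0$, and regard $T=[T_1,\ldots,T_n]$ as an operator $\cH^{(n)}\to\cH$. Since $z\in\cH_0$ we have $T_iz=V_iz$, whence $\|TJ_iz\|=\|T_iz\|=\|V_iz\|=\|z\|=\|J_iz\|$; as $\Delta_{T^*}^2=I_{\cH^{(n)}}-T^*T\ge0$, this gives $\|\Delta_{T^*}J_iz\|^2=\langle(I-T^*T)J_iz,J_iz\rangle=\|J_iz\|^2-\|TJ_iz\|^2=0$, so $\Delta_{T^*}^2 J_iz=0$. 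Hence $\Delta_{T^*}^2 J_iT_\beta\Delta_T=0$ for all $i$ and all $|\beta|\ge m$ (all $\beta$ in case (b)), and the reduction of the first paragraph gives that $\Theta_T$ is a polynomial of degree $\le m$ (of degree zero in case (b)). I expect the only point needing care to be this last step: one should not try to verify $\Delta_{T^*}^2 J_iT_\beta\Delta_T=0$ coordinatewise, which would require the extra fact — itself a consequence of $\sum_iT_iT_i^*\le I$ — that $\overline{\ran B_i}$ and $\overline{\ran C_i}$ are orthogonal to $\ran V$; it is much cleaner to note that $T$ preserves the norm of $J_iz$ because $T_i$ acts isometrically on $\cH_0$. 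Everything else is the block bookkeeping above, into which the hypotheses on $W$, $N$ and $V$ enter respectively via $\cD_T\subseteq\cH_0\oplus\cH_1$, via $T_\beta\Delta_Th\in\cH_0$, and via the isometry of $T_i|_{\cH_0}$.
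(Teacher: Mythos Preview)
Your argument is correct and reaches the same target as the paper---the vanishing of $\Delta_{T^*}^2 J_i T_\beta \Delta_T$ for $|\beta|\ge m$---but by a genuinely different and more economical route. The paper proceeds by explicitly working out the block structure of the \emph{square roots} $\Delta_T$ and $\Delta_{T^*}$: from $V_i^*V_j=\delta_{ij}I$ it gets the $(1,1)$-block of each $n\times n$ entry of $\Delta_{T^*}^2$ to be zero, and then uses positivity of $\Delta_{T^*}$ to force an entire row and column of zeros in $\Delta_{T^*}$ itself; an analogous positivity argument handles $\Delta_T$. It then multiplies out $\Delta_T T_\beta^* P_i \Delta_{T^*}$ as a product of $3\times 3$ block matrices.

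You bypass the square-root analysis entirely. Your three structural facts---$\Delta_T\cH\subseteq\cH_0\oplus\cH_1$ from the coisometry, $T_\beta(\cH_0\oplus\cH_1)\subseteq\cH_0$ from nilpotency, and $T_i|_{\cH_0}=V_i$ isometric---funnel everything to a single norm identity: for $z=T_\beta\Delta_T h\in\cH_0$ one has $\|TJ_iz\|=\|V_iz\|=\|z\|=\|J_iz\|$, whence $\|\Delta_{T^*}J_iz\|^2=0$. This replaces the paper's block-matrix bookkeeping on $\Delta_{T^*}$ by a one-line positivity argument on $\Delta_{T^*}^2$, and it makes transparent exactly where each hypothesis on $V$, $N$, $W$ enters. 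The paper's approach has the compensating virtue of exhibiting the full block pattern of the defect operators, which can be useful elsewhere, but for the statement at hand your argument is cleaner.
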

\begin{proof} First, we consider the case when $m\geq 1$.
Since $V_i^*V_j=\delta_{ij}I$ for $i,j\in \{1,\ldots,n\}$, we have
$$
T_i^*T_j=
\left[\begin{matrix}\delta_{ij}I&*&*\\
*&*&*\\
*&*&*
\end{matrix}\right],\qquad i=1,\ldots,n,
$$
where $*$ stands for an unspecified entry. Consequently, we deduce
that $ \Delta_{T^*}^2=[\delta_{ij}I_\cH-T_i^*T_j]_{n\times n}=[{\bf
K}_{ij}]_{n\times n}$, where each operator entry ${\bf K}_{ij}\in
B(\cH)$ has the
form ${\bf K}_{ij}=[K_{ij}^{(pq)}]_{3\times 3}=\left[\begin{matrix}0&*&*\\
*&*&*\\
*&*&*
\end{matrix}\right]$ with
respect to the decomposition  $\cH=\cH_0\oplus \cH_1\oplus\cH_2$.
Let $\Delta_{T^*}$ have the matrix representation
$\Delta_{T^*}=[{\bf D}_{ij}]_{n\times n}$, where each entry ${\bf
D}_{ij}$  has the form $[D_{ij}^{(pq)}]_{3\times 3}$, $p,q\in
\{1,2,3\}$, with respect to the decomposition $\cH=\cH_0\oplus
\cH_1\oplus\cH_2$. Since $\Delta_{T^*}$ is a positive operator, we
must have ${\bf D}_{ii}\geq 0$ and ${\bf D}_{ji}={\bf D}_{ij}^*$ for
all $i,j\in \{1,\ldots, n\}$. This implies $D_{ii}^{(pp)}\geq 0$ for
all $i\in \{1,\ldots,n\}$ and $p\in\{1,2,3\}$, and
$D_{ji}^{(qp)}=(D_{ij}^{pq)})^*$ for all $i,j\in \{1,\ldots,n\}$ and
$p,q\in\{1,2,3\}$. Since $K_{ii}^{(11)}=0$  and
$$
K_{ii}^{(11)}=\sum_{q=1}^3
\sum_{j=1}^nD_{ij}^{(1q)}(D_{ij}^{(1q)})^*\qquad  \text{ for } \
i\in \{1,\ldots,n\},
$$
we deduce that $D_{ij}^{(1q)}=0$ for all $i,j\in \{1,\ldots,n\}$ and
$q\in \{1,2,3\}$. Therefore $\Delta_{T^*}$ has the operator matrix
representation
$$
\Delta_{T^*}=
\left[\begin{matrix}
\left[\begin{matrix}0&0&0\\
0&*&*\\
0&*&*
\end{matrix}\right] &\cdots &
\left[\begin{matrix}0&0&0\\
0&*&*\\
0&*&*
\end{matrix}\right]\\
\vdots&\vdots&\vdots\\
\left[\begin{matrix}0&0&0\\
0&*&*\\
0&*&*
\end{matrix}\right] &\cdots &
\left[\begin{matrix}0&0&0\\
0&*&*\\
0&*&*
\end{matrix}\right]
\end{matrix}\right].
$$
Now, note that
$$
\Delta_T^2=I-\sum_{i=1}^n T_iT_i^*=\left[\begin{matrix}*&*&*\\
*&*&*\\
*&*&0
\end{matrix}\right].
$$
Setting $\Delta_T=[\Lambda_{pq}]_{3\times 3}$ and taking into
account that $\Delta_T\geq 0$, we deduce that $\Lambda_{pp}\geq 0$
and $\Lambda_{qp}=\Lambda_{pq}^*$ for $p,q\in \{1,2,3\}$. Since
$$
\Delta_T^2= \left[\begin{matrix}*&*&*\\
*&*&*\\
*&*&\Lambda_{13}^*\Lambda_{13}+\Lambda_{23}^*\Lambda_{23}+\Lambda_{33}^2
\end{matrix}\right],
$$
we must have
$\Lambda_{13}^*\Lambda_{13}+\Lambda_{23}^*\Lambda_{23}+\Lambda_{33}^2=0$,
which implies $\Lambda_{13}=\Lambda_{23}=\Lambda_{33}=0$. Therefore,
$\Delta_T$ has the form
$$
\Delta_T=\left[\begin{matrix}*&*&0\\
*&*&0\\
0&0&0
\end{matrix}\right]
$$
 with
respect to the decomposition  $\cH=\cH_0\oplus \cH_1\oplus\cH_2$.
Since
$$
T_\beta=\left[\begin{matrix}V_\beta&*&*\\
0&0&*\\
0&0&W_\beta
\end{matrix}\right]
$$
for all  $\beta\in \FF_n^+$ with $|\beta|\geq m\geq 1$, we deduce that
\begin{equation*}
\begin{split}
\Delta_TT_\beta^*P_i\Delta_{T^*} (\oplus_{i=1}^n h_i)&= \sum_{i=1}^n
\left[\begin{matrix}*&*&0\\
*&*&0\\
0&0&0
\end{matrix}\right]\left[\begin{matrix}V_\beta^*&0&0\\
*&0&0\\
*&*&W_\beta^*
\end{matrix}\right]\left[\begin{matrix}0&0&0\\
0&*&*\\
0&*&*
\end{matrix}\right]h_i \\
&=\sum_{i=1}^n \left[\begin{matrix}*&0&0\\
*&0&0\\
0&0&0
\end{matrix}\right]
\left[\begin{matrix}0&0&0\\
0&*&*\\
0&*&*
\end{matrix}\right]h_i=0
\end{split}
\end{equation*}
for any $\oplus_{i=1}^n h_i\in \cH^{(n)}$. Hence,
$\Delta_TT_\beta^*P_i\Delta_{T^*}=0$ for all $\beta\in \FF_n^+$ with
$|\beta|\geq m\geq 1$, which shows that the characteristic function
$\Theta_T$ is a polynomial of degree $\leq m$.

Now, we consider the case when $m=0$. Similar considerations as above reveal that $\Delta_{T^*}$ and $\Delta_T$ have the forms
$$
\Delta_{T^*}=
\left(\begin{matrix}
\left[\begin{matrix}0&0\\
0&*
\end{matrix}\right] &\cdots &
\left[\begin{matrix}0&0\\
0&*
\end{matrix}\right]\\
\vdots&\vdots&\vdots\\
\left[\begin{matrix}0&0\\
0&*
\end{matrix}\right] &\cdots &
\left[\begin{matrix}0&0\\
0&*
\end{matrix}\right]
\end{matrix}\right)
$$
and $
\Delta_T=\left[\begin{matrix}*&0\\
0&0
\end{matrix}\right]
$
 with
respect to the decomposition  $\cH=\cH_0\oplus\cH_2$.
Since
$
T_\beta=\left[\begin{matrix}V_\beta&*\\
0&W_\beta
\end{matrix}\right]$
for all  $\beta\in \FF_n^+$, we have
\begin{equation*}
\begin{split}
\Delta_TT_\beta^*P_i\Delta_{T^*} (\oplus_{i=1}^n h_i)&= \sum_{i=1}^n
\left[\begin{matrix}*&0\\
0&0
\end{matrix}\right]\left[\begin{matrix}V_\beta^*&0\\
*&W_\beta^*
\end{matrix}\right]\left[\begin{matrix}0&0\\
0&*
\end{matrix}\right]h_i \\
&=\sum_{i=1}^n \left[\begin{matrix}*&0\\
0&0
\end{matrix}\right]
\left[\begin{matrix}0&0\\
0&*
\end{matrix}\right]h_i=0
\end{split}
\end{equation*}
for any $\oplus_{i=1}^n h_i\in \cH^{(n)}$ and $\beta\in \FF_n^+$. Hence, we deduce that the characteristic function $\Theta_T$ is a constant, i.e., $\Theta_T=\Theta_T(0)$.
The proof is
complete.
\end{proof}

Combining Theorem \ref{structure} and Theorem \ref{VNW}, we obtain
the following characterization for   row contractions with
polynomial characteristic functions.

\begin{theorem}\label{main-structure} Let $T:=[T_1,\ldots, T_n]\in [B(\cH)^n]_1^-$ be a
 row contraction. Then the characteristic function $\Theta_T$
is a noncommutative polynomial of degree $m\in \NN$ if and only if
there exist subspaces
 $\cH_0$, $\cH_{1}$, and $\cH_2$ of $\cH$  such that
$\cH=\cH_0\oplus \cH_{1}\oplus \cH_2$ and each $T_i$ admits a
representation
$$
T_i=\left[\begin{matrix}V_i&*&*\\
0&N_i&*\\
0&0&W_i
\end{matrix}\right],\qquad i=1,\ldots,n,
$$
where $V:=[V_1,\ldots, V_n]\in [B(\cH_0)^n]_1^-$ is a pure isometry,
$N:=[N_1,\ldots, N_n]\in [B(\cH_{1})^n]_1^-$ is a nilpotent row
contraction of order $m$, and $W:=[W_1,\ldots,W_n]\in
[B(\cH_2)^n]_1^-$ is a coisometry. Moreover, the degree of
$\Theta_T$ is the smallest possible order of $N$  in the
representation of $T$.
\end{theorem}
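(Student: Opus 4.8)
The plan is to deduce Theorem~\ref{main-structure} by combining the two previous theorems, with attention to the refinements that go beyond a mere restatement: that $N$ has order \emph{exactly} $m$ (not just $\le m$), that $V$ is \emph{pure}, and the minimality claim for $\deg(\Theta_T)$.

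\smallskip
For the forward direction, suppose $\Theta_T$ is a noncommutative polynomial of degree $m\in\NN$. If $m\ge 1$, Theorem~\ref{structure} produces the desired decomposition $\cH=\cH_v\oplus\cH_{nil}\oplus\cH_c$ with the $3\times 3$ upper-triangular form, with $V=[V_1,\dots,V_n]$ a pure isometry on $\cH_v$, $N=[N_1,\dots,N_n]$ a nilpotent row contraction of order $\le m$ on $\cH_{nil}$, and $W=[W_1,\dots,W_n]$ a coisometry on $\cH_c$; relabel $\cH_0:=\cH_v$, $\cH_1:=\cH_{nil}$, $\cH_2:=\cH_c$. It remains to argue the order of $N$ is exactly $m$. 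Suppose for contradiction that $N$ has order $m'\le m-1$. Then $T$ has a representation of the form in Theorem~\ref{VNW}(a) with nilpotency order $m'$ (when $m'\ge 1$) or of the form in Theorem~\ref{VNW}(b) with the middle block absorbed (when $m'=0$), and that theorem forces $\Theta_T$ to be a polynomial of degree $\le m'<m$, contradicting $\deg(\Theta_T)=m$. If $m=0$, Theorem~\ref{structure} gives the $2\times 2$ form $\cH=\cH_v\oplus\cH_c$, i.e.\ the stated decomposition with $\cH_1=\{0\}$ and $N$ the (empty) nilpotent row contraction of order $0$, in accordance with the convention. For the converse, given a decomposition of the stated form with $N$ of order $m$: if $m\ge 1$, Theorem~\ref{VNW}(a) shows $\deg(\Theta_T)\le m$; combined with the minimality we are about to establish (or directly: if $\deg(\Theta_T)=:m''<m$, applying the already-proven forward direction to $T$ would yield a representation with nilpotency order $m''<m$, and then the uniqueness/minimality statement contradicts order$(N)=m$), we get $\deg(\Theta_T)=m$. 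If $m=0$, Theorem~\ref{VNW}(b) gives $\deg(\Theta_T)=0$ directly.

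\smallskip
The minimality assertion—``the degree of $\Theta_T$ is the smallest possible order of $N$ in the representation of $T$''—is handled as follows. On one hand, Theorem~\ref{structure} exhibits \emph{one} representation in which $N$ has order $\le m=\deg(\Theta_T)$, so the smallest attainable order is $\le\deg(\Theta_T)$. On the other hand, if $T$ admits \emph{any} representation of the given upper-triangular form with $N$ of order $m_0$, then Theorem~\ref{VNW} forces $\deg(\Theta_T)\le m_0$; hence every attainable order is $\ge\deg(\Theta_T)$. Combining, the minimum attainable order equals $\deg(\Theta_T)$, and in particular the representation from Theorem~\ref{structure} realizes this minimum, so in it $N$ has order exactly $m$—which also closes the gap left above.

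\smallskip
The main obstacle is bookkeeping rather than conceptual: one must carefully track the edge case $m=0$ through both directions (where $\cH_1=\{0\}$ and ``nilpotent of order $0$'' is a convention), make sure that Theorem~\ref{VNW}'s hypothesis that $V$ be an isometry (not necessarily pure) is what is available on the converse side while Theorem~\ref{structure}'s output gives a \emph{pure} isometry on the forward side—so the two representations are not literally identical but the degree conclusions match—and verify that ``order of $N$ equal to $m$'' in the converse hypothesis is genuinely used (via the contradiction argument) rather than merely ``order $\le m$,'' since Theorem~\ref{VNW} alone only yields degree $\le m$. Once these matchings are in place, the proof is a short assembly of the two cited results.
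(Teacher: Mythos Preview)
Your approach is exactly the paper's---its entire proof is the sentence ``Combining Theorem~\ref{structure} and Theorem~\ref{VNW}''---and your forward direction and your minimality paragraph are correct. But your converse paragraph contains a real gap. You attempt to show that a decomposition with $N$ of order exactly $m$ forces $\deg(\Theta_T)=m$, arguing that if $\deg(\Theta_T)=m''<m$ then the forward direction yields another representation with nilpotency $m''$, and ``the uniqueness/minimality statement contradicts order$(N)=m$.'' There is no uniqueness here, and the minimality statement does not give one: a single $T$ may admit triangular representations with \emph{different} nilpotency orders, so having one of order $m''$ and another of order $m>m''$ is no contradiction at all. Concretely, take $T=[S_1,\dots,S_n]$ and, for any $m\ge1$, set $\cH_0:=F^2(H_n)\ominus\cP_{m-1}$, $\cH_1:=\cP_{m-1}$, $\cH_2:=\{0\}$; then $V_i:=S_i|_{\cH_0}$ defines a pure row isometry, $N_i:=P_{\cP_{m-1}}S_i|_{\cP_{m-1}}$ is nilpotent of order exactly $m$, and $W$ is vacuously a coisometry---yet $\deg(\Theta_S)=0\neq m$. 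So the literal implication ``order $m\Rightarrow\deg=m$'' is false, and no argument can rescue it.

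The fix is to read the biconditional as ``$\Theta_T$ is a polynomial $\Leftrightarrow$ some decomposition of this shape exists,'' with the \emph{Moreover} clause carrying the quantitative content $\deg(\Theta_T)=\min\{\text{order}(N)\}$. Your minimality paragraph already proves precisely this: Theorem~\ref{VNW} gives $\deg(\Theta_T)\le m_0$ for every attainable order $m_0$ (so in particular $\Theta_T$ is a polynomial whenever any such representation exists), and Theorem~\ref{structure} produces one representation with order $\le\deg(\Theta_T)$, whence the minimum attainable order equals $\deg(\Theta_T)$ and the canonical representation realizes it. Delete the circular converse argument and let the minimality paragraph do the work; then the assembly is complete and matches the paper.
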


In general, a row contraction has many representations in  upper
triangular form. The next result shows that, in a certain sense, the
representation provided  by Theorem \ref{structure} is unique.

\begin{proposition} Let $T:=[T_1,\ldots, T_n]\in [B(\cH)^n]_1^-$ be a
 row contraction such that the characteristic  function
$\Theta_T$ is a noncommutative polynomial of degree $m\in \NN$.
 Let $T:=[T_1,\ldots, T_n]$ have a representation
$$
T_i=\left[\begin{matrix}V_i'&*&*\\
0&N_i'&*\\
0&0&W_i'
\end{matrix}\right],\qquad i=1,\ldots,n,
$$
with respect to a decomposition $\cH=\cH_0'\oplus \cH_1'\oplus
\cH_2'$, where $[V_1',\ldots, V_n']\in [B(\cH_0')^n]_1^-$ is an
isometry,
 $[N_1',\ldots, N_n']\in [B(\cH_{1}')^n]_1^-$ is a nilpotent row
contraction of order $m$, and $[W_1',\ldots,W_n']\in
[B(\cH_2')^n]_1^-$ is a coisometry.

Then the  upper triangular representation of $T$ given by Theorem
\ref{structure} has the  following properties: $\cH_v\subseteq
\cH_0'$, $\cH_c\supseteq \cH_2'$. Moreover,
\begin{equation*}\begin{split}
\cH_v&=\overline{\text{\rm span}}\{T_\beta h:\ h\in \cD_T,
|\beta|\geq m\},\\
 \cH_c&=\{h\in \cH: \ \sum_{|\alpha|=k} \|T_\alpha^*
h\|^2=\|h\|^2
\text{ for all } k\in \NN\}, \quad \text{ and }\\
\cH_{nil}&=\cH\ominus (\cH_v\oplus \cH_c).
\end{split}
\end{equation*}

\end{proposition}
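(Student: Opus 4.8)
The plan is to show the three subspaces $\cH_v$, $\cH_c$, $\cH_{nil}$ built in the proof of Theorem \ref{structure} are characterized intrinsically (so that any upper triangular representation is constrained accordingly). I would first establish the formula for $\cH_c$. The argument in the proof of Theorem \ref{structure} already shows $\cH_3 = \cH\ominus\cM = \cH_c$, where $\cH_c$ is defined by \eqref{hc}; that identity is purely internal to $T$ and does not reference any representation, so the stated formula for $\cH_c$ is in hand. Now given \emph{any} representation of the indicated form with respect to $\cH = \cH_0'\oplus\cH_1'\oplus\cH_2'$, the block $[W_1',\ldots,W_n']$ is a coisometry on $\cH_2'$, and since $\cH_2'$ is co-invariant (the matrix is upper triangular, so $\cH_2'$ is invariant under each $T_i^*$), the row $[T_1^*|_{\cH_2'},\ldots,T_n^*|_{\cH_2'}]$ equals $[W_1^{\prime *},\ldots,W_n^{\prime *}]$ and is an isometry. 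But $\cH_c$ was shown in Theorem \ref{structure} to be the \emph{largest} co-invariant subspace on which $[T_1^*,\ldots,T_n^*]$ acts isometrically; hence $\cH_2'\subseteq\cH_c$, which is the claimed inclusion $\cH_c\supseteq\cH_2'$.

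Next I would treat $\cH_v$. By construction $\cH_v = \overline{\operatorname{span}}\{T_\beta h : h\in\cD_T,\ |\beta|\ge m\}$, again an intrinsic formula, giving the displayed equality. For the inclusion $\cH_v\subseteq\cH_0'$: with respect to the primed decomposition, write $\Delta_T = [\Lambda_{pq}']_{3\times 3}$. Running the same positivity argument as in the proof of Theorem \ref{VNW} (using that $[W_i']$ is a coisometry, so the $(3,3)$ block of $I - \sum T_iT_i^*$ vanishes) forces the third block-row and block-column of $\Delta_T$ to vanish; thus $\cD_T = \overline{\Delta_T\cH}\subseteq\cH_0'\oplus\cH_1'$. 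Similarly, since $[V_i']$ is an isometry, the positivity argument on $\Delta_{T^*}$ shows (as in Theorem \ref{VNW}) that $\Delta_T$ also kills $\cH_1'$ in the appropriate sense once one incorporates the nilpotency of $N'$; more directly, for $|\beta|\ge m$ one has $N'_\beta = 0$, so in the upper triangular form $T_\beta$ maps $\cH_0'\oplus\cH_1'$ into $\cH_0'$ (the $(1,1)$ entry is $V'_\beta$, the $(2,*)$ entries below it feeding $\cH_1'$ are zero). Since $\cD_T\subseteq\cH_0'\oplus\cH_1'$ and $\cH_0'$ is invariant, $T_\beta\cD_T\subseteq\cH_0'$ for $|\beta|\ge m$, whence $\cH_v\subseteq\cH_0'$.

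Finally, $\cH_{nil} = \cH\ominus(\cH_v\oplus\cH_c)$ holds by the very definition in Theorem \ref{structure} together with the identities $\cH_c = \cH\ominus\cM$ and $\cH_{nil} = \cM\ominus\cH_v$ (so $\cH_v\oplus\cH_{nil} = \cM$ and its orthocomplement is $\cH_c$); I would just record this. The main obstacle is the step showing $\cH_v\subseteq\cH_0'$: one must carefully combine the two positivity computations of Theorem \ref{VNW} (one forcing $\cD_T\perp\cH_2'$ from the coisometric block, one controlling the $\cH_1'$-component) with the nilpotency $N'_\beta = 0$ for $|\beta|\ge m$ to conclude that $T_\beta\cD_T\subseteq\cH_0'$; this requires tracking block entries through the product $T_\beta$ rather than invoking a single clean invariance statement, since $\cD_T$ itself need not lie in $\cH_0'$, only in $\cH_0'\oplus\cH_1'$, and the nilpotency is exactly what pushes it the rest of the way after applying $T_\beta$.
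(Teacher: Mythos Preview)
Your proposal is correct and follows essentially the same route as the paper: the paper likewise (i) invokes the positivity computation from Theorem~\ref{VNW} to show $\Delta_T$ has vanishing third block-row and block-column with respect to $\cH_0'\oplus\cH_1'\oplus\cH_2'$, then (ii) multiplies by $T_\beta$ (with $N'_\beta=0$ for $|\beta|\ge m$) to conclude $T_\beta\Delta_T h\in\cH_0'$, and (iii) gets $\cH_2'\subseteq\cH_c$ from the maximality of $\cH_c$, exactly as you do. Your brief detour about $\Delta_{T^*}$ ``killing $\cH_1'$'' is unnecessary (and not quite the right formulation), but since you immediately supply the ``more directly'' argument via $N'_\beta=0$, nothing is missing.
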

\begin{proof} As  in the proof of Theorem \ref{VNW},  we deduce that $\Delta_T$ has the form
$$
\Delta_T=\left[\begin{matrix}*&*&0\\
*&*&0\\
0&0&0
\end{matrix}\right]
$$
 with
respect to the decomposition  $\cH=\cH_0'\oplus \cH_1'\oplus\cH_2'$
and
$$
T_\alpha \Delta_T=
\left[\begin{matrix}V_\beta'&*&*\\
0&0&*\\
0&0&W_\beta'
\end{matrix}\right]\left[\begin{matrix}*&*&0\\
*&*&0\\
0&0&0
\end{matrix}\right]=\left[\begin{matrix}*&*&0\\
0&0&0\\
0&0&0
\end{matrix}\right]
$$
for any $\alpha\in \FF_n^+$ with $|\alpha|\geq m$. Consequently, if
$h=h_0\oplus h_1\oplus h_1$, where $h_j\in \cH_j'$ for
$j=0,1,2$, then $T_\alpha\Delta h=\left[\begin{matrix}* \\
0 \\
0
\end{matrix}\right]\in \cH_0'$ for
$|\alpha|\geq m$. Hence $\cH_v\subset \cH_0'$. Note that the
inclusion $\cH_2'\subseteq \cH_c$ is true due to the fact that
$\cH_c$ is the largest invariant subspace   under
$T_1^*,\ldots T_n^*$  such that  $\left[\begin{matrix}T_1^*|_{\cH_c}\\
\vdots\\
T_n^*|_{\cH_c}
\end{matrix}\right] $ is an isometry.
The last part of the proposition follows from the proof of Theorem
\ref{structure}.
\end{proof}

In what follows, we call the  upper triangular representation of $T$
given by Theorem \ref{structure}   canonical.

We recall that a row contraction $T:=[T_1,\ldots, T_n]\in
[B(\cH)^n]_1^-$ is called  {\it completely non-unitary}  (c.n.u.) if
there is no  nonzero subspace    $\cM\subseteq \cH$  reducing under
$T_1,\ldots,T_n$ such that $[T_1|_\cM,\ldots, T_n|_\cM]$ is a
unitary operator from $\cM^{(m)}$ to $\cM$.

Using Theorem \ref{structure}, one can easily deduce the following
\begin{corollary}
\label{structure2} Let $T:=[T_1,\ldots, T_n]\in [B(\cH)^n]_1^-$ be a
c.n.u. row contraction.  Then the characteristic function $\Theta_T$
is a noncommutative polynomial of degree $m\in \NN$ if and only if
there exist subspaces
 $\cH_0$, $\cH_1$, and $\cH_2$ of $\cH$  such that
$\cH=\cH_0\oplus \cH_1\oplus \cH_2$ and each $T_i$ admits a
representation
$$
T_i=\left[\begin{matrix}V_i&*&*\\
0&N_i&*\\
0&0&C_i
\end{matrix}\right],\qquad i=1,\ldots,n,
$$
where $V:=[V_1,\ldots, V_n]\in [B(\cH_0)^n]_1^-$ is a  pure row
isometry, $N:=[N_1,\ldots, N_n]\in [B(\cH_1)^n]_1^-$ is a nilpotent
row contraction of order $m$, and $C:=[C_1,\ldots,C_n]\in
[B(\cH_2)^n]_1^-$ is a c.n.u. coisometry.
\end{corollary}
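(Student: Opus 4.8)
The plan is to deduce both implications from the structure theorems already in hand, the only new ingredient being that the coisometric block produced in Theorem~\ref{structure} is forced to be completely non-unitary once $T$ is.

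For the forward implication, suppose $T$ is c.n.u.\ and $\Theta_T$ is a noncommutative polynomial of degree $m$. First I would apply Theorem~\ref{structure} (invoking Theorem~\ref{main-structure} to see that the nilpotent block can, in the canonical decomposition, be taken of order exactly $m$) to obtain $\cH=\cH_v\oplus\cH_{nil}\oplus\cH_c$ and the upper triangular form of $T$ in which $[V_1,\dots,V_n]$ on $\cH_v$ is a pure isometry, $[N_1,\dots,N_n]$ on $\cH_{nil}$ is nilpotent of order $m$ (with $\cH_{nil}=\{0\}$ if $m=0$), and $W_i:=P_{\cH_c}T_i|_{\cH_c}$ defines a coisometry $[W_1,\dots,W_n]$ on $\cH_c$; since $\cH_c$ is invariant under $T_1^*,\dots,T_n^*$ we also have $W_i^*=T_i^*|_{\cH_c}$. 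Relabelling $\cH_0=\cH_v$, $\cH_1=\cH_{nil}$, $\cH_2=\cH_c$, $C_i=W_i$, the whole forward direction comes down to showing that $[W_1,\dots,W_n]$ is a \emph{c.n.u.} coisometry.

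The heart of the matter is the following claim: if $\cM\subseteq\cH_c$ reduces $W_1,\dots,W_n$ and $[W_1|_\cM,\dots,W_n|_\cM]$ is a unitary operator from $\cM^{(n)}$ onto $\cM$, then $\cM$ reduces $T_1,\dots,T_n$. The inclusion $T_i^*\cM=W_i^*\cM\subseteq\cM$ is clear. For $T_i\cM\subseteq\cM$, the unitarity of the row $[W_i|_\cM]$ gives $(W_i|_\cM)^*(W_i|_\cM)=I_\cM$, hence $\|W_ih\|=\|h\|$ for every $h\in\cM$; since $\|W_ih\|=\|P_{\cH_c}T_ih\|\le\|T_ih\|\le\|h\|$ (each $T_i$ is a contraction) both inequalities are equalities, so $\|P_{\cH_c}T_ih\|=\|T_ih\|$, forcing $T_ih\in\cH_c$ and therefore $T_ih=W_ih\in\cM$. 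Thus $\cM$ reduces $T$ and $[T_i|_\cM]=[W_i|_\cM]$ is unitary; complete non-unitarity of $T$ then forces $\cM=\{0\}$, so $[W_1,\dots,W_n]$ has no nonzero reducing subspace on which it is unitary, i.e.\ it is c.n.u.

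For the converse, if $T$ admits the displayed three-block form with $V$ a pure isometry, $N$ nilpotent of order $m$, and $C$ a c.n.u.\ coisometry, then in particular $C$ is a coisometry, so Theorem~\ref{main-structure} (or directly Theorem~\ref{VNW}) shows that $\Theta_T$ is a noncommutative polynomial of degree $m$. The main obstacle is precisely the claim in the forward direction: one has to upgrade a reducing subspace of the \emph{compressed} coisometry $[W_1,\dots,W_n]$ to a reducing subspace of the original $T$, and the point that makes this possible is that unitarity on $\cM$ propagates the contraction inequalities to equalities and thus keeps $T_i\cM$ inside $\cH_c$; once that is known, the corollary is a formal consequence of Theorems~\ref{structure}, \ref{main-structure} and \ref{VNW}.
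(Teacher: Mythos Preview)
Your argument is correct and is precisely the kind of verification the paper has in mind when it says the corollary follows easily from Theorem~\ref{structure}: apply the canonical triangulation, then observe that the coisometric block $W=[W_1,\ldots,W_n]$ on $\cH_c$ must itself be c.n.u.\ because any reducing subspace $\cM\subseteq\cH_c$ on which $W$ is unitary is forced (via $\|W_ih\|=\|h\|=\|T_ih\|\geq\|P_{\cH_c}T_ih\|=\|W_ih\|$) to be reducing for $T$ with $T|_\cM$ unitary, contradicting the c.n.u.\ hypothesis. The converse is the direct citation of Theorem~\ref{VNW}/Theorem~\ref{main-structure} you give.
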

We remark that there is  a canonical upper triangular representation
for c.n.u. row contractions,   namely, the one provided by Theorem
\ref{structure}.

\bigskip

\section{Unitary invariants on the unit ball of $B(\cH)^n$}

In general, a row contraction has many representations in  upper
triangular form.  The next result gives another  reason why we will
focus on the canonical upper triangular representations of row contractions  with
polynomial characteristic functions.

\begin{proposition} Let $T:=[T_1,\ldots, T_n]\in [B(\cH)^n]_1^-$
 and $T':=[T_1',\ldots, T_n']\in [B(\cH')^n]_1^-$ be
  row contractions with  polynomial characteristic functions, and let
  $$
T_i=\left[\begin{matrix}V_i&*&*\\
0&N_i&*\\
0&0&W_i
\end{matrix}\right]\quad \text{ and } \quad T_i'=\left[\begin{matrix}V_i'&*&*\\
0&N_i'&*\\
0&0&W_i'
\end{matrix}\right]
$$
be their   canonical representations on $\cH=\cH_v\oplus
\cH_{nil}\oplus \cH_c$ and $\cH'=\cH_v'\oplus \cH_{nil}'\oplus
\cH_c'$, respectively. If $U:\cH\to \cH'$ is a unitary operator such
that $UT_i=T_i'U$ for all $i=1,\ldots,n$, then
$$U(\cH_v)=\cH_v', \quad
U(\cH_{nil})=\cH_{nil}', \quad   U(\cH_c)=\cH_c', $$
 and the diagonal entries of $T$ and $T'$ are unitarily equivalent,
 i.e.,
$$(U|_{\cH_v})V_i=V_i'(U|_{\cH_v}),\quad
(U|_{\cH_{nil}})N_i=N_i'(U|_{\cH_{nil}}),\quad
(U|_{\cH_c})W_i=W_i'(U|_{\cH_c})
$$
for all $i=1,\ldots,n$.
Moreover, if $T:=[T_1,\ldots, T_n]$ has a representation
$$
T_i=\left[\begin{matrix}A_i&*&*\\
0&B_i&*\\
0&0&C_i
\end{matrix}\right],\qquad i=1,\ldots,n,
$$
with respect to a decomposition $\cH=\cH_0\oplus \cH_1\oplus \cH_2$,
where $[A_1,\ldots, A_n]\in [B(\cH_0)^n]_1^-$ is a pure isometry,
 $[B_1,\ldots, B_n]\in [B(\cH_{1})^n]_1^-$ is a nilpotent row
contraction of order $m\in \NN$, and $[C_1,\ldots,C_n]\in
[B(\cH_2)^n]_1^-$ is a coisometry, then the diagonal entries of \
$T$ are not, in general, unitarily equivalent  with  those
corresponding  to  the canonical representation of $T$.
\end{proposition}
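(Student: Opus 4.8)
The plan is to deduce the first assertion from the intrinsic description of the subspaces $\cH_v$, $\cH_{nil}$, $\cH_c$ established in the previous proposition, and to dispose of the ``moreover'' part by exhibiting a single explicit example. I would begin by recording what the intertwining relation $UT_i=T_i'U$, $i=1,\ldots,n$, transports. Since $U$ implements a unitary equivalence of the row contractions $T$ and $T'$, their characteristic functions coincide (see \cite{Po-charact}); in particular $\deg\Theta_T=\deg\Theta_{T'}=:m$, so both canonical representations are associated with the same integer $m$. From $U\big(\sum_iT_iT_i^*\big)U^*=\sum_iT_i'(T_i')^*$ and uniqueness of the positive square root one gets $U\Delta_TU^*=\Delta_{T'}$, hence $U(\cD_T)=\cD_{T'}$. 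Also $UT_\beta^*=(T_\beta')^*U$ for every $\beta\in\FF_n^+$, so $\|T_\alpha^*h\|=\|(T_\alpha')^*Uh\|$ and $\|h\|=\|Uh\|$ for all $h\in\cH$ and $\alpha\in\FF_n^+$.

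With these facts in hand the transport of the three subspaces is immediate from the intrinsic formulas of the previous proposition. Because $U$ is a linear homeomorphism, it carries $\overline{\text{\rm span}}\{T_\beta h:h\in\cD_T,\ |\beta|\geq m\}$ onto $\overline{\text{\rm span}}\{T_\beta'h':h'\in\cD_{T'},\ |\beta|\geq m\}$, i.e.\ $U(\cH_v)=\cH_v'$; the membership condition $\sum_{|\alpha|=k}\|T_\alpha^*h\|^2=\|h\|^2$ is invariant under $h\mapsto Uh$, whence $U(\cH_c)=\cH_c'$; and since unitaries respect orthogonal complements, $U(\cH_{nil})=U\big(\cH\ominus(\cH_v\oplus\cH_c)\big)=\cH'\ominus(\cH_v'\oplus\cH_c')=\cH_{nil}'$. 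The unitary equivalence of the diagonal blocks then follows formally: $\cH_v$ is $T$-invariant, while $\cH_c$ and $\cH_{nil}$ behave well under the relevant compressions, so $UP_{\cH_\bullet}=P_{\cH_\bullet'}U$ for $\bullet\in\{v,nil,c\}$, and restricting (resp.\ compressing) $UT_i=T_i'U$ gives $(U|_{\cH_v})V_i=V_i'(U|_{\cH_v})$, $(U|_{\cH_{nil}})N_i=N_i'(U|_{\cH_{nil}})$, and $(U|_{\cH_c})W_i=W_i'(U|_{\cH_c})$, each $U|_{\cH_\bullet}$ being a unitary onto the corresponding primed subspace.

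For the last assertion I would display a non-canonical representation whose diagonal is genuinely different. Take $T:=[S_1,\ldots,S_n]$, the left creation operators on $F^2(H_n)$; this is a pure row isometry with $\Delta_{T^*}=0$, so $\Theta_T=0$ has degree $0$ and the canonical representation of $T$ is $T$ itself, with $\cH_v=F^2(H_n)$ and $\cH_{nil}=\cH_c=\{0\}$. On the other hand, with respect to the decomposition $F^2(H_n)=\big(F^2(H_n)\ominus\CC1\big)\oplus\CC1$, the subspace $F^2(H_n)\ominus\CC1$ is invariant under $S_1,\ldots,S_n$, the restriction $[S_1,\ldots,S_n]|_{F^2(H_n)\ominus\CC1}$ is again a pure row isometry, and the compression of $[S_1,\ldots,S_n]$ to the one-dimensional space $\CC1$ is $[0,\ldots,0]$, a nilpotent row contraction of order $1$. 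Thus $T$ also admits a representation of the allowed form in which the nilpotent block is nonzero and acts on a one-dimensional space, and is therefore not unitarily equivalent, as a triple of diagonal blocks, to the canonical one, whose nilpotent block acts on $\{0\}$. I expect no real obstacle in the first part beyond the bookkeeping just described; the only mild point is to confirm that $\deg\Theta_T=\deg\Theta_{T'}$ and that $U$ carries $\cD_T$, $\Delta_T$, and $\cH_c$ onto their primed counterparts, after which everything reduces to the intrinsic characterization already proved.
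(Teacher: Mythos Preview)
Your argument for the first part is essentially the paper's: both invoke the intrinsic descriptions of $\cH_v$, $\cH_c$, $\cH_{nil}$ from Section~1 together with $U\Delta_T=\Delta_{T'}U$ to transport the canonical subspaces, and then read off the unitary equivalence of the diagonal blocks from $UP_{\cH_\bullet}=P_{\cH_\bullet'}U$.

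For the ``moreover'' part you give a different and simpler counterexample. The paper takes
\[
T_i=S_i\oplus \bigl(P_{\cP_{m-1}}S_i|_{\cP_{m-1}}\bigr)\oplus C_i
\]
on $F^2(H_n)\oplus\cP_{m-1}\oplus\cN$ for a fixed $m\geq 1$ and a coisometry $C$, computes the canonical decomposition explicitly, and observes that both the pure-isometry blocks (multiplicity $1$ versus $n^m$, for $n\geq 2$) and the nilpotent blocks fail to match. Your example uses only $T=[S_1,\ldots,S_n]$ with the splitting $F^2(H_n)=(F^2(H_n)\ominus\CC1)\oplus\CC1\oplus\{0\}$: the canonical nilpotent block acts on $\{0\}$ while the non-canonical one acts on $\CC1$, so they cannot be unitarily equivalent. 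Your construction is more economical and already works for every $n\geq 1$; the paper's example is heavier but exhibits the discrepancy simultaneously in two diagonal blocks and for an arbitrary nilpotent order $m$.
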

\begin{proof} According to Section 1,
we have
\begin{equation*}\begin{split}
\cH_v&=\overline{\text{\rm span}}\{T_\beta h:\ h\in \cD_T,
|\beta|\geq m\},\\
 \cH_c&=\{h\in \cH: \ \sum_{|\alpha|=k} \|T_\alpha^*
h\|^2=\|h\|^2
\text{ for all } k\in \NN\},\\
\cH_{nil}&=\cH\ominus (\cH_v\oplus \cH_c),
\end{split}
\end{equation*}
and similar formulas hold for $\cH_v', \cH_c'$ and  $\cH_{nil}'$, respectively. If
$U:\cH\to \cH'$ is a unitary operator such that $UT_i=T_i'U$ for
$i=1,\ldots,n$, then   $U\Delta_T=\Delta_{T'}U$ and
$U(\cH_v)=\cH_v'$, $ U(\cH_{nil})=\cH_{nil}'$, and
$U(\cH_c)=\cH_c'$. Now,  it is easy to see that the diagonal entries
of $T$ and $T'$ are unitarily equivalent.

To prove the last part of the proposition,  let  $\cN$ be a
separable Hilbert space and let $C_i\in B(\cN)$ be such that
$C=[C_1,\ldots,C_n]$ is a coisometry. Fix $m\geq 1$ and denote by
$\cP_{m-1}$ the subspace of all polynomials of degree $\leq m-1$ in
the full Fock space $F^2(H_n)$, i.e. $\cP_{m-1}:=\text{\rm
span}\{e_\alpha: \ \alpha\in \FF_n^+, |\alpha|\leq m-1\}$. Let
$T:=[T_1,\ldots,T_n]$ be defined by
$$
T_i=\left[\begin{matrix}S_i&0&0\\
0&P_{\cP_{m-1}}S_i|_{\cP_{m-1}}&0\\
0&0&C_i
\end{matrix}\right],\qquad i=1,\ldots,n,
$$
with respect to the decomposition $\cH:=F^2(H_n)\oplus
\cP_{m-1}\oplus \cN$, where $S_1,\ldots, S_n$ are the left creation
operators on $F^2(H_n)$. According to Theorem \ref{structure}, the
canonical  decomposition of $T_i$ is
$$
T_i=\left[\begin{matrix}V_i&*&*\\
0&N_i&*\\
0&0&C_i
\end{matrix}\right],\qquad i=1,\ldots,n,
$$
with respect to the decomposition $\cH=\cH_v\oplus \cH_{nil}\oplus
\cH_{c}$, where
$$
\cH_v:=[F^2(H_n)\ominus\cP_{m-1}]\oplus 0\oplus 0,\quad
\cH_{nil}:=\cP_{m-1}\oplus \cP_{m-1}\oplus 0,\quad \text{ and }\quad
\cH_c:=0\oplus 0 \oplus \cN,
$$
 the operators $V_i\in B(F^2(H_n)\ominus\cP_{m-1})$, $N_i\in
B(\cP_{m-1}\oplus \cP_{m-1})$, and $C_i\in B(\cL)$ are defined by
$$
V_i:=S_i|_{F^2(H_n)\ominus\cP_{m-1}},\quad N_i:=\left[\begin{matrix}P_{\cP_{m-1}}S_i|_{\cP_{m-1}}&0\\
0&P_{\cP_{m-1}}S_i|_{\cP_{m-1}}
\end{matrix}\right],\quad \text{ and }\quad W_i:=C_i
$$
for any $i=1,\ldots,n$. We remark that the pure isometries
$[S_1,\ldots,S_n]$ and $[V_1,\ldots,V_n]$ are not unitarily
equivalent, when $n\geq 2$,  since they have  the multiplicity $1$
and $n^m$, respectively. Note also that the nilpotent row
contractions $[P_{\cP_{m-1}}S_1|_{\cP_{m-1}}, \ldots,
P_{\cP_{m-1}}S_n|_{\cP_{m-1}}]$ and $[N_1, \ldots, N_n]$ are not
unitarily equivalent, in spite of having the same order $m$. The
proof is complete.
\end{proof}

We need to recall from \cite{Po-poisson} that the noncommutative
Poisson kernel associated with  a row contraction $T:=[T_1,\ldots, T_n]\in  [B(\cH)^n]_1^-$ is the operator
$
K_{T} :\cH\to F^2(H_n)\otimes \overline{\Delta_{T}\cH}
$
defined by

\begin{equation*}
K_{T}h:= \sum_{k=0}^\infty \sum_{|\alpha|=k} e_\alpha\otimes
\Delta_{T} T_\alpha^*h,\quad h\in \cH.
\end{equation*}
The operator $K_{rT}$ is an  isometry if $0<r<1$, and
$$
K_T^*K_T=I-
\text{\rm SOT-}\lim_{k\to\infty} \sum_{|\alpha|=k} T_\alpha T_\alpha^*.
$$
 The connection between the characteristic function and the Poisson kernel of a row contraction is given by the formula
$ I-\Theta_T\Theta_T^*=K_TK_T^*$ (see \cite{Po-varieties}).

 Let $\NN_\infty:=\NN\cup \{\infty\}$ and
define the map
$$\Gamma:[B(\cH)^n]_1^-\to \NN_\infty\times \NN_\infty\times
\NN_\infty, \qquad   \Gamma(T):=(p,m,q),
$$
by setting $m:=\deg (\Theta_T)$, \  $q:=\dim (\ker K_T)$, and
 $$p:=\begin{cases} \dim (\cD_m\ominus \cD_{m+1})& \quad \text{ if  } m\in \NN\\
 \dim \overline{\Delta_T\cH}& \quad \text{ if }
m=\infty,
\end{cases}$$
  where
 $
 \cD_m:=\overline{\text{\rm span}}\{T_\beta \Delta_Th:\ h\in \cH, |\beta|\geq
m\}, $
 $\Theta_T$ is the characteristic function, $K_T$ is the
noncommutative Poisson kernel, and $\Delta_T$ is the defect operator
associated with $T\in [B(\cH)^n]_1^-$. One can easily show that the
map $\Gamma$ is a unitary invariant for row contractions, i.e., if
$T \in [B(\cH)^n]_1^-$
 and $T' \in [B(\cH')^n]_1^-$ are unitarily
 equivalent, then $\Gamma(T)=\Gamma(T')$.

The next result shows that the map $\Gamma$    detects the pure row
isometries in the closed  unit ball of $B(\cH)^n$ and completely
classify them up to a unitary equivalence.

\begin{theorem} \label{invariant1} Let $T:=[T_1,\ldots, T_n]\in [B(\cH)^n]_1^-$ be a
  row contraction. Then the following statements hold:
  \begin{enumerate}
  \item[(i)] $T$ is a pure isometry if and only if \
   $\Gamma(T)\in \NN_\infty \times\{0\}\times\{0\}$.
   \item[(ii)]  If \ $T,T'\in
  [B(\cH)^n]_1^-$ and $\Gamma(T)=\Gamma(T')=(p,0,0)$ for some $p\in
  \NN_\infty$, then $T$ is unitarily equivalent to $T'$ and $p=\rank \Delta_T=\rank \Delta_{T'}$.
  \end{enumerate}
\end{theorem}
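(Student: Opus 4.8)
The statement has two parts. For (i), I would characterize when the degree $m$ of $\Theta_T$ is $0$ together with $\ker K_T=\{0\}$. The condition $q=\dim(\ker K_T)=0$ means $K_T^*K_T=I$, equivalently $\text{SOT-}\lim_{k\to\infty}\sum_{|\alpha|=k}T_\alpha T_\alpha^*=0$, i.e. $T$ is pure. The condition $m=0$ means $\Theta_T$ is constant, so by Theorem \ref{main-structure} (or Theorem \ref{structure}) with $\cH_1=\{0\}$, $T$ admits the canonical upper triangular representation $T_i=\left[\begin{matrix}V_i&*\\0&W_i\end{matrix}\right]$ on $\cH=\cH_v\oplus\cH_c$, with $[V_1,\dots,V_n]$ a pure isometry and $[W_1,\dots,W_n]$ a coisometry. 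Now I would use purity of $T$ to force $\cH_c=\{0\}$: a vector $h\in\cH_c$ satisfies $\sum_{|\alpha|=k}\|T_\alpha^*h\|^2=\|h\|^2$ for all $k$, which contradicts $\text{SOT-}\lim_k\sum_{|\alpha|=k}T_\alpha T_\alpha^*=0$ unless $h=0$. Hence $\cH=\cH_v$ and $T=V$ is a pure isometry. Conversely, if $T$ is a pure row isometry then $\Delta_T=0$, so $\Theta_T=-I\otimes T|_{\cD_{T^*}}$ acts on the zero space $\cD_T=\{0\}$ — more precisely $\Theta_T$ is the zero operator (a polynomial of degree $0$ by the paper's convention) — so $m=0$; and purity gives $q=0$; $p$ can be any element of $\NN_\infty$ since it records the multiplicity $\dim(\cD_0\ominus\cD_1)=\dim\overline{\Delta_T\cH}$... wait, for an isometry $\Delta_T=0$, so I must check how $p$ is actually defined in this degenerate case. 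Here $\cD_m=\overline{\text{span}}\{T_\beta\Delta_T h\}=\{0\}$ for all $m$, so $p=\dim(\cD_0\ominus\cD_1)=0$ would be forced, which seems to contradict "completely classify." Let me reconsider: the intended reading must be that for a pure isometry the multiplicity is recovered differently, or the claim in (ii) that $p=\rank\Delta_T$ shows $p=0$ for an isometry — but then a multiplicity-$1$ and multiplicity-$n$ shift would have the same $\Gamma$, contradicting (ii)'s classification claim. So I suspect $p$ for an isometry is meant to capture the wandering subspace dimension. I would look more carefully: perhaps the statement implicitly uses that for a \emph{pure isometry} one recovers the multiplicity, and the correct invariant defect to use is $I-\sum T_iT_i^*$ restricted appropriately.

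**Resolving the definition of $p$ and proving (ii).** For a pure row isometry $T=V$ on $\cH_v$, the Wold decomposition gives $\cH_v\cong F^2(H_n)\otimes\cE$ for a wandering subspace $\cE=\cH_v\ominus\sum_i V_i\cH_v$, and $[V_1,\dots,V_n]$ is unitarily equivalent to $[S_1\otimes I_\cE,\dots,S_n\otimes I_\cE]$; the multiplicity $\dim\cE$ is a complete unitary invariant. So part (ii) is essentially the statement that $p$ equals $\dim\cE=\rank\Delta_{T^*}$ (not $\Delta_T$!) since for an isometry $\Delta_{T^*}^2=I-T^*T$ projects onto $\cD_{T^*}$ and $\dim\cD_{T^*}=\dim\cE$ by the Wold theorem. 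Actually the paper writes $p=\rank\Delta_T$ in the statement — but for an isometry $\Delta_T=0$, so this literally says $p=0$. Given the tension, the safest reading is that the paper's $\Gamma$, when $m=0$, is designed so that $p$ is computed on $\cH_c$... no. I think the resolution is: when $T$ is pure and $m=0$ we have $\cH=\cH_v$ and $\cH_c=\{0\}$, but the defect operator and $p$ must be read through the \emph{formula for $p$ when $m\in\NN$}, namely $p=\dim(\cD_0\ominus\cD_1)$ with $\cD_0=\overline{\text{span}}\{T_\beta\Delta_Th:|\beta|\geq 0\}$. Hmm, this is $\{0\}$. I will therefore present the proof assuming the intended invariant is the Wold multiplicity and flag that $p$ should be interpreted as $\rank\Delta_{T^*}$ (which the statement asserts equals $\rank\Delta_T$ — true in the sense that both should be read as the defect controlling the model, and an erratum-level clarification is that for a pure isometry $p=\dim\cD_{T^*}$). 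The honest writeup: use $I-\Theta_T\Theta_T^*=K_TK_T^*$ from the excerpt; when $m=0$ and $q=0$, $K_T$ is an isometry with range a backward-shift-coinvariant subspace, and $\Theta_T$ being a constant unitary isometry-defect forces the Beurling-Lax-Popescu structure $\cH\cong F^2(H_n)\otimes\cD$.

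**Execution steps, in order.** (1) Show $q=0\iff T$ pure, via $K_T^*K_T=I-\text{SOT-}\lim_k\sum_{|\alpha|=k}T_\alpha T_\alpha^*$ and the fact (from the excerpt) that $K_{rT}$ is isometric for $r<1$, plus positivity to get $\ker K_T=\{h:\text{SOT-}\lim\sum T_\alpha T_\alpha^* h=h\}$... more carefully, $\ker K_T=\ker K_T^*K_T$. (2) Show $m=0\iff\Theta_T$ constant, invoke Theorem \ref{structure} to split $\cH=\cH_v\oplus\cH_c$. (3) Combine: $T$ pure forces $\cH_c=\{0\}$ since $\cH_c\ni h$ implies $h\notin\ker K_T$ unless $h=0$ and purity says $\ker K_T=\cH$... no wait, purity says $\cH_c=\{0\}$ directly from the definition \eqref{hc} and the definition of pure. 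So $\cH=\cH_v$, $T=V$ is a pure isometry. (4) Conversely a pure isometry has $\Delta_T=0$ so $\Theta_T=0$ (degree $0$), and is pure so $q=0$: this gives $\Gamma(T)=(p,0,0)$. (5) For (ii): given $\Gamma(T)=\Gamma(T')=(p,0,0)$, by (i) both are pure isometries; by the Wold decomposition \cite{Po-isometric} each is a multiple $[S_i\otimes I]$ with multiplicity equal to the wandering dimension; identify this multiplicity with $p$ by unwinding the definition of $p$ in the $m=0$ case — here I will note $\cD_0=\overline{\text{span}}\{T_\beta\Delta_T h\}$ and since for the \emph{unique smallest-order} canonical form the relevant defect is the one coming from the model, $p=\rank\Delta_{T^*}=\dim$(wandering subspace); the statement's "$p=\rank\Delta_T$" I would reconcile by observing that in the pure-isometry situation the paper intends $\Delta_T$ to mean the model defect $\Delta_{T^*}$, or simply cite that $\rank\Delta_{T^*}$ is the multiplicity and two pure isometries with equal multiplicity are unitarily equivalent. \textbf{The main obstacle} is precisely pinning down what $p$ computes when $T$ is an isometry (so $\Delta_T=0$ and the naive formula gives $p=0$), reconciling it with the classification claim; I expect the intended definition routes through the wandering subspace / Fourier representation defect, and the clean path is to prove everything via $I-\Theta_T\Theta_T^*=K_TK_T^*$ and the Wold–Beurling structure rather than the literal formula. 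The rest — purity $\iff q=0$, constant $\iff m=0$, and the upper-triangular splitting collapsing to a single block — is routine given Theorems \ref{structure} and \ref{main-structure}.
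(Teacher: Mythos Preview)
Your difficulties stem from a single misidentification: you have swapped $\Delta_T$ and $\Delta_{T^*}$. In the paper's convention $\Delta_T=(I_\cH-\sum_i T_iT_i^*)^{1/2}$ and $\Delta_{T^*}=(I_{\cH^{(n)}}-T^*T)^{1/2}$. For a row \emph{isometry} $T$ one has $T^*T=I$, hence $\Delta_{T^*}=0$, but $\Delta_T$ is \emph{not} zero unless $T$ is also coisometric. Concretely, for $T=[S_1\otimes I_\cK,\ldots,S_n\otimes I_\cK]$ one computes $\sum_i T_iT_i^*=I-P_\CC\otimes I_\cK$, so $\Delta_T=P_\CC\otimes I_\cK$ and $\cD_T=1\otimes\cK$. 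Then $\cD_m=\overline{\text{span}}\{T_\beta\Delta_T h:|\beta|\geq m\}$ is the span of $e_\alpha\otimes\cK$ over $|\alpha|\geq m$, whence $\cD_0\ominus\cD_1=1\otimes\cK$ and $p=\dim\cK=\rank\Delta_T$, exactly the Wold multiplicity. There is no tension and no erratum needed: the literal formula for $p$ gives the wandering-subspace dimension, and the statement $p=\rank\Delta_T$ is correct as written.

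A second, smaller error: the condition $q=\dim(\ker K_T)=0$ does \emph{not} say $K_T^*K_T=I$ or that $T$ is pure. Since $K_T^*K_T=I-Q$ with $Q:=\text{SOT-}\lim_k\sum_{|\alpha|=k}T_\alpha T_\alpha^*$ a positive contraction, $\ker K_T=\ker(I-Q)$ is the eigenspace of $Q$ for the eigenvalue $1$, and this equals $\cH_c$. So $q=0$ is exactly the c.n.c.\ condition $\cH_c=\{0\}$, not purity. For the backward direction of (i) you should argue: $q=0$ gives $\cH_c=\{0\}$; $m=0$ together with Theorem~\ref{structure} gives $\cH_{nil}=\{0\}$; hence $\cH=\cH_v$ and $T=V$, which is a pure isometry because $V$ is one by construction. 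Once you fix the defect-operator confusion, the overall architecture of your proof coincides with the paper's, and part (ii) follows immediately from the Wold decomposition since $p$ now genuinely records the multiplicity.
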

\begin{proof} First, we assume that  $T$ is a pure isometry. According to the Wold decomposition for isometries with orthogonal subspaces \cite{Po-isometric}, $T$ is unitarily equivalent to
$(S_1\otimes
  I_\cK,\ldots, S_n\otimes I_\cK)$ for some Hilbert space $\cK$. Therefore, without loss of generality
    we can  assume that $T=[S_1\otimes
  I_\cK,\ldots, S_n\otimes I_\cK]$. In this case, we have  $\Delta_T=P_\CC\otimes I_\cK$ and
  $\Delta_{T^*}=0$.  Consequently, we deduce that $\cD_T=1\otimes
  \cK$, $\cD_{T^*}=\{0\}$, and $\Theta_T=0\in B(\{0\}, \cK)$. Hence, $\deg (\Theta_T)=0$. Note that
  $$
   \left[\overline{\text{\rm span}}\{T_\beta \Delta_Th:\ h\in \cH, |\beta|\geq
m\}\ominus \overline{\text{\rm span}}\{T_\beta \Delta_Th:\ h\in \cH,
|\beta|\geq m+1\}\right]=1\otimes \cK
$$
and  $p=\dim \cK=\rank \Delta_T$. On the other hand, since
$$ \ker K_T=\{h\in \cH: \ \sum_{|\alpha|=k} \|T_\alpha^*
h\|^2=\|h\|^2 \text{ for all } k\in \NN\}=\cK_c$$ and
$T=[S_1\otimes
  I_\cK,\ldots, S_n\otimes I_\cK]$ is a completely non-coisometric row contraction,
   we have $\ker K_T=\cH_c=\{0\}$ and, therefore, $\dim \ker K_T=0$. Summing up, we deduce that
   $\Gamma(T)\in \NN_\infty \times\{0\}\times\{0\}$.

Conversely, assume that $T$ is a row contraction with
  $\Gamma(T)\in \NN_\infty \times\{0\}\times\{0\}$. Then  we have $\ker K_T=\cH_c=\{0\}$.
  According to Theorem \ref{structure},
$T_i$ admits the representation
$$
T_i=\left[\begin{matrix}V_i&*\\
0&N_i
\end{matrix}\right],\qquad i=1,\ldots,n,
$$
with respect to the decomposition $\cH=\cH_v\oplus \cH_{nil}$. On the other hand,
  since $\deg(\Theta_T)=0$, we must
   have  $\cH_{nil}=\{0\}$ and $T_i=V_i$ for  $i=1,\ldots,n$.
   Therefore, $T=[V_1,\ldots,V_n]$ is a pure isometry on $\cH$ and, using   the Wold decomposition
    for isometries with orthogonal subspaces, we deduce that
 $$
  p=\dim \left[\overline{\text{\rm span}}\{T_\beta h:\ h\in \cD_T, \beta\in \FF_n^+
\}\ominus \overline{\text{\rm span}}\{T_\beta h:\ h\in \cD_T,
|\beta|\geq 1\}\right]
$$
is the dimension of the wandering subspace for $T=[V_1,\ldots,V_n]$. Hence, $T=[V_1,\ldots,V_n]$ is
 unitarily equivalent to
$[S_1\otimes
  I_\cK,\ldots, S_n\otimes I_\cK]$ for some Hilbert space $\cK$ with $\dim \cK=p$,
  where $S_1,\ldots, S_n$  are the left creation operators on the full Fock space
  $F^2(H_n)$. Therefore, part (i) holds.

   To prove part (ii), assume that \ $T,T'\in
  [B(\cH)^n]_1^-$ and $\Gamma(T)=\Gamma(T')=(p,0,0)$ for some $p\in
  \NN_\infty$. Due to the first part of the proof, we deduce that $T$
  and $T'$ are pure row contractions with the property that the
  dimensions of their wandering subspaces are equal to $p=\rank
  \Delta_T=\rank \Delta_{T'}$. Consequently, using the Wold
  decomposition,
   we conclude that the pure row isometries $T$
  and $T'$ are unitarily equivalent. The proof is complete.
\end{proof}

We remark that, due to Theorem \ref{invariant1} and the  model
theory for row contraction \cite{Po-charact}, if  $q=0$ and $m=0$ or
$q=0$ and  $m=\infty$, then $p$ represents the multiplicity of the
$n$-tuple $(S_1,\ldots,S_n)$ of left creation operators in the
operator model of $T=(T_1,\ldots,T_n)$.

\begin{corollary} Let $T:=(T_1,\ldots, T_n)\in [B(\cH)^n]_1^-$ be a
  row contraction and $S_1,\ldots, S_n$  be the left creation operators on the full Fock space $F^2(H_n)$.
 Then $T$ is  unitarily equivalent to $(S_1\otimes
  I_\cK,\ldots, S_n\otimes I_\cK)$ for some Hilbert space $\cK$  if and only
  if
  $$\Gamma(T)=(\dim \cK, 0, 0).
  $$
   In this case, $\rank \Delta_T=\dim \cK$.
\end{corollary}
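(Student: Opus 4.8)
The plan is to deduce the corollary directly from Theorem \ref{invariant1}. The statement says $T$ is unitarily equivalent to $(S_1\otimes I_\cK,\ldots,S_n\otimes I_\cK)$ if and only if $\Gamma(T)=(\dim\cK,0,0)$, together with the claim $\rank\Delta_T=\dim\cK$ in that case. So essentially this is the special case of Theorem \ref{invariant1} in which one member of the pair is itself a creation-operator tuple, plus a cardinality bookkeeping remark.

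First I would prove the forward implication. Assume $T$ is unitarily equivalent to $(S_1\otimes I_\cK,\ldots,S_n\otimes I_\cK)$. Since $\Gamma$ is a unitary invariant, it suffices to compute $\Gamma$ on the model tuple $T'=[S_1\otimes I_\cK,\ldots,S_n\otimes I_\cK]$. But this computation was already carried out inside the proof of Theorem \ref{invariant1}: there one checks $\Delta_{T'}=P_\CC\otimes I_\cK$, $\Delta_{T'^*}=0$, $\cD_{T'}=1\otimes\cK$, $\cD_{T'^*}=\{0\}$, hence $\Theta_{T'}=0$ with $\deg\Theta_{T'}=0$, that $\cD_0\ominus\cD_1 = 1\otimes\cK$ so $p=\dim\cK=\rank\Delta_{T'}$, and that $\ker K_{T'}=\cH_c=\{0\}$ because the model tuple is c.n.c., giving $q=0$. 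Therefore $\Gamma(T)=\Gamma(T')=(\dim\cK,0,0)$ and $\rank\Delta_T=\rank\Delta_{T'}=\dim\cK$. I would simply cite this portion of the preceding proof rather than redo it.

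For the converse, suppose $\Gamma(T)=(\dim\cK,0,0)$. By Theorem \ref{invariant1}(i), since $\Gamma(T)\in\NN_\infty\times\{0\}\times\{0\}$, the tuple $T$ is a pure row isometry, and by the Wold decomposition for isometries with orthogonal ranges \cite{Po-isometric} it is unitarily equivalent to $[S_1\otimes I_{\cK'},\ldots,S_n\otimes I_{\cK'}]$ for some Hilbert space $\cK'$, where $\dim\cK'$ is the dimension of the wandering subspace and, by the first part just proved, equals the first component of $\Gamma(T)$, namely $\dim\cK$. Since Hilbert spaces of equal (finite or infinite) dimension are unitarily isomorphic, $\cK'\cong\cK$, so $T$ is unitarily equivalent to $[S_1\otimes I_{\cK},\ldots,S_n\otimes I_{\cK}]$, and $\rank\Delta_T=\dim\cK'=\dim\cK$.

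The only mild subtlety — hardly an obstacle — is the bookkeeping that the first coordinate of $\Gamma$ of the model tuple genuinely is the dimension of the wandering subspace (equivalently $\rank\Delta_T$), so that "$\Gamma(T)=(\dim\cK,0,0)$" pins down $\cK$ up to unitary isomorphism rather than merely producing some pure isometry; but this is exactly what the computation of $p$ in the proof of Theorem \ref{invariant1} supplies, in both the $p\in\NN$ and $p=\infty$ cases. Hence the corollary follows with essentially no new argument.
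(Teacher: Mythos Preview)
Your proposal is correct and takes essentially the same approach as the paper: the paper states the corollary with no proof, so it is intended to follow immediately from Theorem~\ref{invariant1} and the Wold decomposition, exactly as you outline. One could shorten your converse slightly by invoking Theorem~\ref{invariant1}(ii) directly with $T'=[S_1\otimes I_\cK,\ldots,S_n\otimes I_\cK]$, but this is a stylistic choice, not a different argument.
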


Let $\Phi:[B(\cH)^n]_1\to B(\cH)\bar\otimes B(\cK_1, \cK_2)$ and
$\Phi':[B(\cH)^n]_1\to B(\cH)\bar\otimes B(\cK_1', \cK_2')$ be two
free holomorphic functions. We say that $\Phi$ and $\Phi'$ coincide
if there are two unitary operators $\tau_j\in B(\cK_j,\cK_j')$,
$j=1,2$, such that
$$
\Phi'(X)(I_\cH\otimes \tau_1)=(I_\cH\otimes \tau_2)\Phi(X),\qquad
X\in [B(\cH)^n]_1.
$$

Now, we can prove the following classification result.

\begin{theorem}  Let $T:=[T_1,\ldots, T_n]\in [B(\cH)^n]_1^-$ be a
  row contraction.
  Then  the following statements hold:
  \begin{enumerate}
  \item[(i)] $T$ is   a pure row contraction  with  polynomial characteristic function
  if and only if
  $$\Gamma(T)\in \NN_\infty\times \NN\times\{0\}.
  $$
   In this case, $T_i$ has
  the canonical
  form $
T_i=\left[\begin{matrix}V_i&* \\
0&N_i
\end{matrix}\right]$,
where $V:=[V_1,\ldots, V_n]\in [B(\cH_v)^n]_1^-$ is a pure isometry
and  $N:=[N_1,\ldots, N_n]\in [B(\cH_{nil})^n]_1^-$ is a nilpotent
row contraction.
\item[(ii)] the map $T\mapsto  (\Gamma(T), \Theta_T)$  detects the pure
row contractions with polynomial characteristic functions  and
completely classify them.

\end{enumerate}
\end{theorem}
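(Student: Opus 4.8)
The plan is to prove both parts by leveraging the structure theorems of Section 1 together with the already-established Theorem \ref{invariant1} and the model-theoretic properties of the Poisson kernel $K_T$ and the characteristic function $\Theta_T$.

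For part (i), I would argue as follows. Suppose first that $T$ is a pure row contraction with polynomial characteristic function of degree $m\in\NN$. By Theorem \ref{main-structure}, $T$ has a canonical upper triangular representation with diagonal blocks $V$ (a pure isometry on $\cH_v$), $N$ (a nilpotent row contraction of order $m$ on $\cH_{nil}$), and $W$ (a coisometry on $\cH_c$). Since $T$ is pure, the coisometric corner must vanish: indeed $\cH_c=\ker K_T$ (as identified in Section 1, $\cH_c=\{h:\sum_{|\alpha|=k}\|T_\alpha^*h\|^2=\|h\|^2\ \forall k\}$), and purity forces $K_T$ to be injective, so $\cH_c=\{0\}$ and $q=\dim\ker K_T=0$. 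Thus $T_i=\left[\begin{smallmatrix}V_i&*\\0&N_i\end{smallmatrix}\right]$ as claimed, $m=\deg(\Theta_T)\in\NN$, and $p=\dim(\cD_m\ominus\cD_{m+1})\in\NN_\infty$, so $\Gamma(T)\in\NN_\infty\times\NN\times\{0\}$. Conversely, if $\Gamma(T)=(p,m,q)$ with $m\in\NN$ and $q=0$, then $\Theta_T$ is a polynomial (degree $m$), and $\ker K_T=\cH_c=\{0\}$. From $K_T^*K_T=I-\text{SOT-}\lim_{k}\sum_{|\alpha|=k}T_\alpha T_\alpha^*$, injectivity of $K_T$ plus the triangular structure forces $T$ to be pure: the canonical representation of Theorem \ref{structure} has no coisometric corner ($\cH_c=\{0\}$), so $T_i=\left[\begin{smallmatrix}V_i&*\\0&N_i\end{smallmatrix}\right]$ with $V$ a pure isometry and $N$ nilpotent, and one checks directly that such a row contraction is pure since $\sum_{|\alpha|=k}\|T_\alpha^*h\|^2\to 0$ (the nilpotent corner kills high powers, the pure-isometric corner contributes a term tending to $0$ by purity of $V$). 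This establishes the equivalence and the stated canonical form.

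For part (ii), the claim is that $T\mapsto(\Gamma(T),\Theta_T)$ is a complete unitary invariant for the class of pure row contractions with polynomial characteristic function. One direction is immediate: if $T$ and $T'$ are unitarily equivalent then $\Gamma(T)=\Gamma(T')$ (as noted in the text, $\Gamma$ is a unitary invariant) and $\Theta_T$ coincides with $\Theta_{T'}$ (this is the fundamental property of the characteristic function from \cite{Po-charact}). For the converse, suppose $T$ and $T'$ are both pure row contractions with polynomial characteristic functions, $\Gamma(T)=\Gamma(T')$, and $\Theta_T$ coincides with $\Theta_{T'}$. By part (i), both are completely non-coisometric (since $\ker K_T=\cH_c=\{0\}$ means c.n.c.). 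But $\Theta_T$ is a \emph{complete} unitary invariant for c.n.c. row contractions by the main result of \cite{Po-charact}, so coincidence of $\Theta_T$ and $\Theta_{T'}$ already gives that $T$ and $T'$ are unitarily equivalent. (Here $\Gamma$ is recorded in the pair mainly to specify the class — it tells us we are in the polynomial, pure regime — and, as in Theorem \ref{invariant1}(ii), to pin down multiplicities in degenerate cases; the coincidence of characteristic functions does the real work.) Finally, for "detection": a row contraction $T$ lies in this class precisely when $\Gamma(T)\in\NN_\infty\times\NN\times\{0\}$, which is exactly part (i), so the pair $(\Gamma,\Theta_T)$ indeed detects the class.

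The main obstacle I anticipate is the careful verification in the converse direction of part (i) that $\Gamma(T)\in\NN_\infty\times\NN\times\{0\}$ genuinely forces purity — one must rule out any hidden Cuntz (unitary) part. This is handled by the identification $\ker K_T=\cH_c$ together with the Wold-type decomposition used in the proof of Theorem \ref{structure}: there it is shown that the unitary/Cuntz summand $\cH_u$ of the isometric corner satisfies $\cH_u\subseteq\cH_c$, so $q=0$ kills it, and then the remaining triangular form with pure-isometric and nilpotent corners is manifestly pure. The rest is bookkeeping with the already-available structure and model theorems.
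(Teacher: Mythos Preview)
Your approach matches the paper's closely: both directions of (i) use Theorem~\ref{structure}/\ref{main-structure} to obtain the triangular form, identify $\ker K_T=\cH_c$, and for (ii) invoke the complete unitary invariance of $\Theta_T$ for c.n.c.\ row contractions from \cite{Po-charact}.

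The one place where your proposal is too quick is the claim that a row contraction of the form $T_i=\left[\begin{smallmatrix}V_i&*\\0&N_i\end{smallmatrix}\right]$ with $V$ a pure isometry and $N$ nilpotent is ``manifestly pure.'' The nilpotent corner and the pure-isometric corner are fine, but the off-diagonal entries $X_{(\alpha)}$ contribute a term $\sum_{|\alpha|=k}\|X_{(\alpha)}^*x\|^2$ to $\sum_{|\alpha|=k}\|T_\alpha^*(x\oplus y)\|^2$ that does not obviously vanish. The paper handles this by observing that $[X_{(\alpha)}:|\alpha|=m]$ is a row contraction, then writing $T_{\alpha_1\cdots\alpha_k}=\left[\begin{smallmatrix}V_{\alpha_1\cdots\alpha_k}&V_{\alpha_1\cdots\alpha_{k-1}}X_{(\alpha_k)}\\0&0\end{smallmatrix}\right]$ for $|\alpha_j|=m$, which gives $\sum_{|\gamma|=mk}\|T_\gamma^*(x\oplus y)\|^2\leq \sum_{|\gamma|=mk}\|V_\gamma^*x\|^2+\sum_{|\gamma|=m(k-1)}\|V_\gamma^*x\|^2\to 0$. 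Your ``main obstacle'' paragraph conflates this issue with ruling out the Cuntz part of $V$ (already done inside Theorem~\ref{structure}); the actual work is controlling these cross terms, and that computation should be supplied.
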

\begin{proof}
Assume that $T$ is a pure row contraction with polynomial
characteristic function. Let
 $$
T_i=\left[\begin{matrix}V_i&*&*\\
0&N_i&*\\
0&0&W_i
\end{matrix}\right]
$$
be the  canonical upper triangular representation on
$\cH=\cH_v\oplus \cH_{nil}\oplus \cH_c$, provided by Theorem
\ref{structure}.  If $h\in \cH_c$, then $T_\alpha^*(0\oplus 0\oplus
\oplus h)=0\oplus 0\oplus   W_\alpha^* h$. Consequently, we have
$$
\|h\|^2=\sum_{|\alpha|=k} \|W_\alpha^* h\|^2=\sum_{|\alpha|=k}
\|T_\alpha^*(0\oplus 0  \oplus h)\|^2, \qquad k\in\NN.
$$
Since $T$ is a pure row contraction, we deduce that $h=0$, which
shows that  $\cH_c=\{0\}$.  Therefore, $\Gamma(T)\in
\NN_\infty\times \NN\times\{0\}$ and $T_i$ has the form $
T_i=\left[\begin{matrix}V_i&* \\
0&N_i
\end{matrix}\right]
$ with respect to the decomposition $\cH=\cH_v\oplus \cH_{nil}$,
where $V:=[V_1,\ldots, V_n]\in [B(\cH_v)^n]_1^-$ is a pure isometry
and  $N:=[N_1,\ldots, N_n]\in [B(\cH_{nil})^n]_1^-$ is a nilpotent
row contraction.

 Conversely, assume that $T$ is a row contraction with  $\Gamma(T)\in \NN_\infty\times \NN\times\{0\}$.
   Hence, $\dim(\ker K_T)=0$ and
 $\cH_c=\{0\}$. According to Theorem  \ref{structure},
 $T_i$ has the form $
T_i=\left[\begin{matrix}V_i& * \\
0&N_i
\end{matrix}\right]$. Assuming that  $N$ is a nilpotent $n$-tuple of  order
$m$, we deduce that there exist operators $X_{(\alpha)}\in
B(\cH_{nil}, \cH_v)$ such that
 \begin{equation}\label{MA}
T_\alpha=\left[\begin{matrix}V_\alpha& X_{(\alpha)} \\
0&0
\end{matrix}\right]
\end{equation}
for all $\alpha\in \FF_n^+$ with $|\alpha|=m$. Since $[T_1,\ldots,
T_n]$ is a row contraction, so is  the row operator $[T_\alpha:\
|\alpha|=k]$ for any $k\geq 1$. In particular, we have
$$
\sum_{|\alpha|=m} \|T_\alpha^*(x\oplus 0 )\|^2=\sum_{|\alpha|=m}
\|V_\alpha^*x \|^2 + \sum_{|\alpha|=m} \|X_{(\alpha)}^*x\|^2\leq
\|x\|^2
$$
for any, $x\in \cH_v$.  Consequently,  the row operator
$[X_{(\alpha)}:\ |\alpha|=m ]$ is a contraction.
Let $\alpha_1,\ldots, \alpha_k\in \FF_n^+$ be such that
$|\alpha_1|=\cdots =|\alpha_k|=m$, and note that, due to relation
\eqref{MA},
$$
T_{\alpha_1}\cdots
T_{\alpha_k}=\left[\begin{matrix}V_{\alpha_1}\cdots V_{\alpha_k}&
V_{\alpha_1}\cdots V_{\alpha_{k-1}}X_{(\alpha_k)}
 \\
0&0
\end{matrix}\right].
$$
 Since $[X_{(\alpha)}:\
|\alpha|=m ]$ is a contraction, we have
\begin{equation*}
\begin{split}
\sum_{\alpha_1,\ldots, \alpha_k\in \FF_n^+\atop
|\alpha_1|=\cdots=|\alpha_k|=m}\|T_{\alpha_1\cdots
\alpha_k}^*(x\oplus y)\|^2 &= \sum_{\alpha_1,\ldots, \alpha_k\in
\FF_n^+\atop|\alpha_1|=\cdots=|\alpha_k|=m}\|V_{\alpha_1\cdots
\alpha_k}^*x\|^2 +\sum_{\alpha_1,\ldots, \alpha_k\in
\FF_n^+\atop|\alpha_1|=\cdots=|\alpha_k|=m}\|X_{\alpha_k}^*V_{\alpha_1\cdots
\alpha_{k-1}}^*x\|^2\\
&\leq \sum_{\gamma\in \FF_n^+,|\gamma|=mk}\|V_{\gamma}^*x\|^2
+\sum_{\gamma\in \FF_n^+, |\gamma|=m(k-1)}\|V_{\gamma}^*x\|^2
\end{split}
\end{equation*}
for any $x\oplus y\in \cH_v\oplus \cH_{nil}$. Taking into account
that $[V_1,\ldots, V_n]$ is a pure isometry, we have
$$\lim_{k\to\infty}\sum_{\gamma\in \FF_n^+, |\gamma|=k}\|V_{\gamma}^*x\|^2=0,\qquad
x\in \cH_v.
$$
Hence, and using the inequalities above, we conclude that
\begin{equation}
\label{Li} \lim_{k\to\infty} \sum_{\gamma\in \FF_n^+,|\gamma|=
mk}\|T_\gamma^*(x\oplus y)\|^2 =0,\qquad x\oplus y\in \cH_v\oplus
\cH_{nil}.
\end{equation}
If $q\geq \NN$, then $q=mk_q+p_q$ for unique $k_q\in \NN$ and
$p_q\in \{0,1,\ldots, m-1\}$. Using the fact that $[T_\gamma: \
|\gamma|=p_q]$ is a row contraction, we have
\begin{equation*}
\begin{split}
\sum_{|\alpha|= q}\|T_\gamma^*(x\oplus y)\|^2 &=
 \sum_{|\gamma|=p_q, |\sigma|=mk_q}\|T_\gamma^* T_\sigma^*(x\oplus
 y)\|^2\\
 &\leq  \sum_{  |\sigma|=mk_q}\|  T_\sigma^*(x\oplus
 y)\|^2.
\end{split}
\end{equation*}
Hence, and using \eqref{Li}, we deduce that $\lim_{p\to\infty}
\sum_{|\alpha|= q}\|T_\gamma^*(x\oplus y)\|^2 =0$, which proves that
$[T_1,\ldots, T_n]$ is a pure row contraction. The proof of part (i)
complete.

To prove part (ii), let $T,T'\in
  [B(\cH)^n]_1^-$ be row contractions. Using the result from part (i) and Theorem 5.4 from \cite{Po-charact}, we
   deduce  that $T$ and
  $T'$ are unitarily equivalent pure row contractions with
  polynomial characteristic functions if and only if $\Gamma(T)$ and
  $\Gamma(T')$ are in $ \NN_\infty\times \NN\times\{0\}$, and the
  characteristic functions $\Theta_T$ and $\Theta_{T'}$ coincide.
  This completes the proof.
\end{proof}

Using Theorem \ref{main-structure}, we can easily deduce the
following
\begin{proposition}  Let $T:=[T_1,\ldots, T_n]\in [B(\cH)^n]_1^-$ be a
  row contraction with polynomial characteristic function. Then the
  following statements hold.
  \begin{enumerate}
\item[(i)] $T_i$ has the form
  $
T_i=\left[\begin{matrix}N_i&* \\
0&W_i
\end{matrix}\right]$  if and only if  \ $\Gamma(T)\in \{0\}\times \NN\times \NN_\infty$.
\item[(ii)] $T_i$ has the form $[N_i]$ if and only if  \ $\Gamma(T)\in \{0\}\times \NN\times \{0\}$.
\item[(iii)] $T_i$ has the form $[W_i]$ if and only if  \ $\Gamma(T)\in \{0\}\times \{0\}\times \NN_\infty$.
 \end{enumerate}
\end{proposition}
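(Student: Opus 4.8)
The plan is to reduce everything to the canonical upper triangular representation from Theorem~\ref{structure} together with the already–established interpretations of the three coordinates of $\Gamma(T)$. Recall from Section~1 that for a row contraction $T$ with polynomial characteristic function of degree $m$ we have the canonical decomposition $\cH=\cH_v\oplus\cH_{nil}\oplus\cH_c$ with
$T_i=\left[\begin{smallmatrix}V_i&*&*\\0&N_i&*\\0&0&W_i\end{smallmatrix}\right]$,
where $V$ is a pure isometry, $N$ is nilpotent of order $m$, and $W$ is a coisometry; moreover $\ker K_T=\cH_c$ (from the proof of Theorem~\ref{structure}, $\cH_c=\cH\ominus\cM$ is exactly the set where $\sum_{|\alpha|=k}\|T_\alpha^*h\|^2=\|h\|^2$ for all $k$, which is $\ker K_T$ by the Poisson kernel formula $K_T^*K_T=I-\text{SOT-}\lim_k\sum_{|\alpha|=k}T_\alpha T_\alpha^*$). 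Also, for $m\in\NN$, $p=\dim(\cD_m\ominus\cD_{m+1})$ where $\cD_m=\overline{\text{span}}\{T_\beta\Delta_Th:|\beta|\geq m\}$; since $\cH_v=\overline{\text{span}}\{T_\beta h: h\in\cD_T,|\beta|\geq m\}$ and $T$ restricted to $\cH_v$ is the pure isometry $V$, the Wold decomposition identifies $p$ with the multiplicity (dimension of the wandering subspace) of $V$. Thus in all three parts the condition "$p=0$'' is equivalent to "$V$ acts on the zero space, i.e.\ $\cH_v=\{0\}$'', the condition "$q=0$'' is equivalent to "$\cH_c=\{0\}$'', and "$q\in\NN_\infty$'' imposes no restriction.

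For part (i), if $T_i=\left[\begin{smallmatrix}N_i&*\\0&W_i\end{smallmatrix}\right]$ with $N$ nilpotent of order $m$ and $W$ a coisometry, then by Theorem~\ref{main-structure} (with $\cH_0=\{0\}$) $\Theta_T$ is a polynomial of degree $\leq m$, so $m=\deg\Theta_T\in\NN$; moreover $\cH_v=\{0\}$ gives $p=0$, so $\Gamma(T)\in\{0\}\times\NN\times\NN_\infty$. Conversely, if $\Gamma(T)\in\{0\}\times\NN\times\NN_\infty$ then $m\in\NN$ so $\Theta_T$ is a polynomial and Theorem~\ref{structure} applies; $p=0$ forces $\cH_v=\{0\}$, collapsing the canonical form to $T_i=\left[\begin{smallmatrix}N_i&*\\0&W_i\end{smallmatrix}\right]$ on $\cH=\cH_{nil}\oplus\cH_c$, as desired. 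Part (ii) is the further specialization where additionally $q=0$, i.e.\ $\cH_c=\{0\}$, leaving $T_i=[N_i]$; conversely $\Gamma(T)\in\{0\}\times\NN\times\{0\}$ gives $m\in\NN$, $\cH_v=\{0\}$, $\cH_c=\{0\}$, hence $T=N$ nilpotent. Part (iii) is the specialization where additionally $m=0$: by the convention on degrees and the $m=0$ clause of Theorem~\ref{structure}, $m=0$ forces $\cH_{nil}=\{0\}$, so combined with $p=0$ (hence $\cH_v=\{0\}$) we get $T_i=[W_i]$ a coisometry; conversely, if $T$ is a coisometry then $\Delta_T=0$, so $\cD_T=\{0\}$ and $\Theta_T$ is the zero operator in $B(\{0\},\cD_T)$ of degree $0$, giving $m=0$ and $p=0$, while $q=\dim\ker K_T$ is unrestricted, so $\Gamma(T)\in\{0\}\times\{0\}\times\NN_\infty$.

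I do not anticipate a genuine obstacle here: each statement is a bookkeeping exercise over the three coordinates of $\Gamma$, and all the structural input—the canonical triangular form, the identification $\ker K_T=\cH_c$, the Wold-decomposition reading of $p$, and the degree bookkeeping in Theorems~\ref{structure}, \ref{VNW}, and \ref{main-structure}—is already available. The one point that needs a line of care is the "only if'' direction of (iii): one must note that having $T_i=[W_i]$ a coisometry forces $\Theta_T\equiv 0$ (so $m=0$ under the zero-polynomial convention) and $\cD_T=\{0\}$ (so $p=0$), rather than reading $m=0$ off of Theorem~\ref{main-structure} directly, which would require the $\cH_0=\cH_1=\{0\}$ degenerate case. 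Everything else is a direct appeal to the cited results.
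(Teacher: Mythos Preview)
Your proposal is correct and follows exactly the route the paper intends: the paper offers no proof beyond the remark ``Using Theorem~\ref{main-structure}, we can easily deduce the following,'' and your argument simply spells out that deduction by reading off the three coordinates of $\Gamma(T)$ from the canonical triangular representation (identifying $p$ with the multiplicity of $V$ via Wold, $q$ with $\dim\cH_c$ via $\ker K_T=\cH_c$, and $m$ with $\deg\Theta_T$). The only place you add something the paper does not make explicit is the identification $\cD_m\ominus\cD_{m+1}=$ wandering subspace of $V$, which is exactly what is needed to justify ``$p=0\iff\cH_v=\{0\}$''; this is the right observation and your treatment of the $m=0$ case in~(iii) is also handled correctly.
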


\begin{corollary} \label{cnc-ga}Let $T:=[T_1,\ldots, T_n]\in [B(\cH)^n]_1^-$ be a
 row contraction. Then  $T$ is c.n.c. if and only if $\Gamma(T)\in \NN_\infty\times \NN_\infty\times
 \{0\}$. In this case, the characteristic function $\Theta_T$
is a noncommutative polynomial of degree $m\in \NN$ if and only if
there exist subspaces
 $\cH_v$ and  $\cH_{nil}$ of $\cH$  such that
$\cH=\cH_v\oplus \cH_{nil}$ and each $T_i$ admits a representation
$$
T_i=\left[\begin{matrix}V_i&*\\
0&N_i
\end{matrix}\right],\qquad i=1,\ldots,n,
$$
where $V:=[V_1,\ldots, V_n]\in [B(\cH_v)^n]_1^-$ is a pure row
isometry and $N=[N_1,\ldots, N_n]\in [B(\cH_{nil})^n]_1^-$ is a
nilpotent row contraction of order $m$. Moreover, the degree of
$\Theta_T$ is the smallest  possible order of $N$ in the
representation of $T$.
\end{corollary}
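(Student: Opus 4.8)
The plan is to prove Corollary \ref{cnc-ga} by assembling facts already established in the excerpt, with the main work being the identification $\ker K_T = \cH_c$ and the translation of the c.n.c.\ condition into the language of $\Gamma$.

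First I would establish the equivalence ``$T$ is c.n.c.\ $\iff \Gamma(T)\in \NN_\infty\times\NN_\infty\times\{0\}$.'' Recall that $T$ is c.n.c.\ means $\cH_c=\{0\}$, where $\cH_c$ is the subspace in \eqref{hc}. On the other hand, from the Poisson kernel formula
$$
K_T^*K_T = I - \text{\rm SOT-}\lim_{k\to\infty}\sum_{|\alpha|=k} T_\alpha T_\alpha^*,
$$
a vector $h$ lies in $\ker K_T$ exactly when $\langle K_T^*K_T h,h\rangle = 0$, i.e.\ when $\lim_{k\to\infty}\sum_{|\alpha|=k}\|T_\alpha^*h\|^2 = \|h\|^2$; since the partial sums $\sum_{|\alpha|=k}\|T_\alpha^*h\|^2$ are non-increasing in $k$ and bounded above by $\|h\|^2$, this limit equals $\|h\|^2$ iff every term equals $\|h\|^2$, which is precisely the defining condition of $\cH_c$. (This identification $\ker K_T = \cH_c$ is essentially recorded in the proof of Theorem \ref{invariant1}.) Hence $q := \dim(\ker K_T) = \dim \cH_c$, so $q = 0 \iff \cH_c = \{0\} \iff T$ is c.n.c. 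Since $m = \deg(\Theta_T) \in \NN_\infty$ and $p \in \NN_\infty$ are unconstrained for a c.n.c.\ row contraction, the first claimed equivalence follows.

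Next, for the ``In this case'' part, I would simply invoke Theorem \ref{main-structure} together with the information that $T$ is now c.n.c. By Theorem \ref{main-structure}, $\Theta_T$ is a polynomial of degree $m\in\NN$ iff $\cH = \cH_0\oplus\cH_1\oplus\cH_2$ with $T_i$ of the $3\times 3$ upper triangular form, $V$ a pure isometry, $N$ nilpotent of order $m$, and $W$ a coisometry, and the degree is the minimal such order of $N$. But the coisometric corner lives on $\cH_c$: indeed, in the canonical representation of Theorem \ref{structure} the third summand is exactly $\cH_c$ (as shown in that proof, $\cH_3 = \cH_c$), so when $T$ is c.n.c.\ we get $\cH_c = \{0\}$ and the representation collapses to the $2\times 2$ form $\cH = \cH_v\oplus\cH_{nil}$ with $T_i = \left[\begin{matrix}V_i & *\\ 0 & N_i\end{matrix}\right]$, $V$ a pure row isometry on $\cH_v$ and $N$ nilpotent of order $m$ on $\cH_{nil}$. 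The converse direction (that such a $2\times 2$ representation forces $\Theta_T$ polynomial of degree $\le m$) is the $\cH_2 = \{0\}$ case of Theorem \ref{VNW}(a), and the minimality of $m$ as the order of $N$ again comes from Theorem \ref{main-structure}.

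The only genuinely delicate point I anticipate is being careful that the decomposition $\cH_v\oplus\cH_{nil}$ produced by specializing Theorem \ref{structure} to the c.n.c.\ case really does have $\cH_{nil}$ of dimension governed by the degree and $\cH_v$ carrying a \emph{pure} (not merely Cuntz-free) isometry — but Theorem \ref{structure} already builds this in, since in that proof $\cH_u \subseteq \cH_c = \{0\}$, forcing the isometric part to be pure. So the proof is essentially a bookkeeping argument: identify $\ker K_T$ with $\cH_c$, read off that $q=0$ is the c.n.c.\ condition, and then quote Theorems \ref{structure}, \ref{VNW}, and \ref{main-structure} with $\cH_c = \{0\}$.
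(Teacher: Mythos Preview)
Your proposal is correct and follows essentially the same approach as the paper, which simply notes that c.n.c.\ means $\cH_c=\{0\}$ and then applies Theorem \ref{main-structure}. You have filled in the details the paper leaves implicit---in particular the identification $\ker K_T=\cH_c$ justifying the first biconditional, and the observation that $\cH_c=\{0\}$ collapses the $3\times 3$ canonical form to the $2\times 2$ one---but the overall logic is the same.
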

\begin{proof}
Since $T$ is c.n.c. row contraction, we must have $\cH_c=\{0\}$.
Applying Theorem \ref{main-structure}, the result follows.
\end{proof}

We remark that the map $T\mapsto  (\Gamma(T), \Theta_T)$  detects the c.n.c.
row contractions  and
completely classify them. Indeed, Corollary \ref{cnc-ga} above  and Theorem 5.4 from \cite{Po-charact}, imply that
    $T$ and
  $T'$ are unitarily equivalent c.n.c. row contractions  if and only if $\Gamma(T)$ and
  $\Gamma(T')$ are in $ \NN_\infty\times \NN_\infty\times\{0\}$ and the
  characteristic functions $\Theta_T$ and $\Theta_{T'}$ coincide.

The next result is a characterization of row contractions with
constant characteristic function.
\begin{theorem} \label{charact-const} Let $T:=[T_1,\ldots, T_n]\in [B(\cH)^n]_1^-$ be a
 row contraction.  Then the following statements are equivalent:
 \begin{enumerate}
\item[(i)] the characteristic function $\Theta_T$ is a constant,   i.e,
$\Theta_T=\Theta_T(0)$;
\item[(ii)] $\Gamma(T)\in \NN_\infty\times \{0\}\times \NN_\infty$;
\item[(iii)] $T$
   admits  the canonical representation
$$
T_i=\left[\begin{matrix}V_i&*\\
0&W_i
\end{matrix}\right],\qquad i=1,\ldots,n,
$$
where $V:=[V_1,\ldots, V_n]\in [B(\cH_v)^n]_1^-$ is a pure  isometry
and $W:=[W_1,\ldots,W_n]\in [B(\cH_c)^n]_1^-$ is a coisometry.
\end{enumerate}
If, in addition, $T$ is  c.n.u, then  $\Theta_T$ is constant if and only if $T$ has the representation above where
 $V$ is a pure isometry and $W$ is a c.n.u. coisometry.
\end{theorem}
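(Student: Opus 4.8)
The plan is to deduce everything from the structure results of Section~1; the only real work is to reconcile the phrase ``constant characteristic function'' with the value $m=0$ appearing in the definition of $\Gamma$. First I would observe that (i) and (ii) are essentially the same statement: by the convention fixed just before Theorem~\ref{structure}, every constant polynomial (the zero polynomial included) has degree $0$, so $\Theta_T=\Theta_T(0)$ holds exactly when $\Theta_T$ is a noncommutative polynomial with $m:=\deg(\Theta_T)=0$; since the middle coordinate of $\Gamma(T)$ is $\deg(\Theta_T)$, this is literally the condition $\Gamma(T)\in\NN_\infty\times\{0\}\times\NN_\infty$, the coordinates $p$ and $q$ being unconstrained.

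Next, for (ii)$\Rightarrow$(iii) I would invoke Theorem~\ref{structure} in the case $m=0$: its final assertion produces the decomposition $\cH=\cH_v\oplus\cH_c$ (with $\cH_{nil}=\{0\}$) and the canonical upper triangular representation of each $T_i$ with diagonal blocks $V_i$ and $W_i$, where $V=[V_1,\ldots,V_n]$ is a pure isometry on $\cH_v$ and $W=[W_1,\ldots,W_n]$ is a coisometry on $\cH_c$, which is exactly~(iii). For the reverse implication (iii)$\Rightarrow$(i) I would apply Theorem~\ref{VNW}(b): a pure isometry is in particular an isometry, so the hypotheses on $V$ (isometry), $N$ (acting on $\{0\}$, the case $m=0$), and $W$ (coisometry) are satisfied, and the conclusion of part~(b) is precisely that $\Theta_T$ is a polynomial of degree zero, i.e.\ $\Theta_T=\Theta_T(0)$. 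This closes the cycle (i)$\Leftrightarrow$(ii)$\Rightarrow$(iii)$\Rightarrow$(i).

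Finally, for the c.n.u.\ addendum I would use Corollary~\ref{structure2} in place of Theorem~\ref{structure}: if $T$ is c.n.u.\ and $\Theta_T$ is constant, then $\deg(\Theta_T)=0$, and the corollary --- noting that a nilpotent $n$-tuple of order $0$ necessarily acts on $\{0\}$, since its degree-zero term is the identity and is forced to vanish --- yields the two-block upper triangular form of each $T_i$ with diagonal entries $V_i$ and $C_i$, where $V$ is a pure row isometry and $C$ is a c.n.u.\ coisometry; conversely, if $T$ has such a form, then $W$ is in particular a coisometry and Theorem~\ref{VNW}(b) again forces $\Theta_T$ to be constant. I do not anticipate a genuine obstacle here: the theorem is a repackaging of Theorems~\ref{structure} and~\ref{VNW} together with Corollary~\ref{structure2}, and the only points demanding attention are the degree-convention bookkeeping that identifies ``constant'' with ``$m=0$'', and the switch to Corollary~\ref{structure2} in the c.n.u.\ part so that the coisometric summand inherits complete non-unitarity.
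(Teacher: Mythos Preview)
Your proposal is correct and matches the paper's own proof, which simply cites Theorem~\ref{main-structure}, Corollary~\ref{structure2}, and the definition of~$\Gamma$; you have merely unpacked Theorem~\ref{main-structure} into its constituents Theorem~\ref{structure} and Theorem~\ref{VNW}(b), and made explicit the degree-convention bookkeeping that the paper leaves implicit.
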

\begin{proof} Using  Theorem \ref{main-structure}, Corollary
\ref{structure2}, and the definition of the map $\Gamma$, the result
follows.
\end{proof}

\bigskip

\section{ The automorphism group
$\text{\rm Aut}(B(\cH)^n_1)$ and unitary projective representation}

The theory of noncommutative characteristic functions for row
contractions \cite{Po-charact} was used  in \cite{Po-automorphism}
to determine the group $\text{\rm Aut}(B(\cH)^n_1)$  of all  free
holomorphic automorphisms of the noncommutative ball $[B(\cH)^n]_1$.
We showed that any $\Psi\in \text{\rm Aut}(B(\cH)^n_1)$ has the form
$$
\Psi=\Phi_U \circ \Psi_\lambda,
$$
where $\Phi_U$ is an automorphism implemented by a unitary operator
$U$ on $\CC^n$, i.e.,
\begin{equation*}
 \Phi_U(X_1,\ldots,
X_n):=[X_1,\ldots, X_n]U , \qquad (X_1,\ldots, X_n)\in [B(\cH)^n]_1,
\end{equation*}
and $\Psi_\lambda$ is an involutive free holomorphic automorphism
associated with $\lambda:=\Psi^{-1} (0)\in \BB_n$. The  automorphism
$\Psi_\lambda:[B(\cH)^n]_1\to [B(\cH)^n]_1$  is   given by
\begin{equation*}
 \Psi_\lambda(X_1,\ldots, X_n):={
\lambda}-\Delta_{ \lambda}\left(I_\cH-\sum_{i=1}^n \bar{{
\lambda}}_i X_i\right)^{-1} [X_1,\ldots, X_n]
\Delta_{{\lambda}^*},\qquad (X_1,\ldots, X_n)\in [B(\cH)^n]_1,
\end{equation*}
where $\Delta_\lambda$ and $\Delta_{\lambda^*}$ are the defect
operators associated with  the row contraction
$\lambda:=[\lambda_1,\ldots, \lambda_n]$.
   Note that, when
$\lambda=0$, we have $\Psi_0(X)=-X$. We recall that if $\lambda \in
\BB_n\backslash \{0\}$ and   $\gamma:=\frac{1}{\|\lambda\|_2}$, then
$\Psi_\lambda$ is a free holomorphic function on $[B(\cH)^n]_\gamma$
which has the following properties:
\begin{enumerate}
\item[(i)]
$\Psi_\lambda (0)=\lambda$ and $\Psi_\lambda(\lambda)=0$;

\item[(ii)] $\Psi_\lambda$ is an involution, i.e., $\Psi_\lambda(\Psi_\lambda(X))=X$
for any $X\in [B(\cH)^n]_\gamma$;
\item[(iii)] $\Psi_\lambda$ is a free holomorphic automorphism of the
noncommutative unit ball $[B(\cH)^n]_1$;
\item[(iv)] $\Psi_\lambda$ is a homeomorphism of $[B(\cH)^n]_1^-$
onto $[B(\cH)^n]_1^-$.
\end{enumerate}

We say that a row contraction $T=(T_1,\ldots, T_n)\in
[B(\cH)^n]_1^-$ is homogeneous if  $T$ is unitarily equivalent to
$\varphi(T)$ for any $\varphi=(\varphi_1,\ldots, \varphi_n)\in
\text{\rm Aut}(B(\cH)^n_1)$.

\begin{theorem} \label{homo} Let $T:=[T_1,\ldots, T_n]\in [B(\cH)^n]_1^-$ be a
 completely non-coisometric row contraction. Then $T$ is homogeneous if and only if $\Theta_T\circ \Psi^{-1}$
 coincides with the characteristic function $\Theta_T$ for any $\Psi\in \text{\rm Aut}(B(\cH)^n_1)$.
\end{theorem}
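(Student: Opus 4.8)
The plan is to use the fact that for completely non-coisometric (c.n.c.) row contractions, the characteristic function is a \emph{complete} unitary invariant (Theorem~5.4 of \cite{Po-charact}). The key auxiliary observation is that if $T$ is c.n.c., then so is $\varphi(T)$ for every $\varphi\in \text{\rm Aut}(B(\cH)^n_1)$: indeed $\varphi = \Phi_U\circ \Psi_\lambda$, and since $\Psi_\lambda$ is a homeomorphism of $[B(\cH)^n]_1^-$ onto itself (property (iv)) and $\Phi_U$ is merely a unitary rotation of the coordinates, neither operation creates a nonzero subspace on which the adjoints act isometrically. More precisely, I would check that $\cH_c$ (as in \eqref{hc}) is preserved: the subspace on which $T^*$ acts isometrically is a joint reducing subspace, and on such a reducing subspace the $F_n^\infty$-functional calculus shows $\varphi(T)$ restricts to $\varphi$ of a Cuntz-type tuple, which is again coisometric, forcing that subspace into the coisometric part of $\varphi(T)$ — and symmetrically, applying $\varphi^{-1}$, the coisometric part of $\varphi(T)$ sits inside $\cH_c(T)=\{0\}$. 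Hence $\varphi(T)$ is c.n.c.

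Granting that, the argument is essentially a restatement of what "complete unitary invariant" means. First I would invoke the transformation law for characteristic functions under free holomorphic automorphisms established in \cite{Po-automorphism}: for a c.n.c.\ row contraction $T$ and $\Psi\in \text{\rm Aut}(B(\cH)^n_1)$, one has that $\Theta_{\Psi(T)}$ coincides with $\Theta_T\circ \Psi^{-1}$ (up to the unitary identifications of the defect spaces built into the notion of "coincide"). This is the technical heart that I would cite rather than reprove. Then:

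\textbf{($\Leftarrow$)} Suppose $\Theta_T\circ\Psi^{-1}$ coincides with $\Theta_T$ for every $\Psi\in\text{\rm Aut}(B(\cH)^n_1)$. Given $\varphi\in\text{\rm Aut}(B(\cH)^n_1)$, set $\Psi=\varphi^{-1}$. By the transformation law, $\Theta_{\varphi(T)}$ coincides with $\Theta_T\circ\varphi^{-1} = \Theta_T\circ\Psi^{-1}$, which by hypothesis coincides with $\Theta_T$. Since $\varphi(T)$ and $T$ are both c.n.c.\ and have coinciding characteristic functions, Theorem~5.4 of \cite{Po-charact} gives that $T$ is unitarily equivalent to $\varphi(T)$. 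As $\varphi$ was arbitrary, $T$ is homogeneous.

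\textbf{($\Rightarrow$)} Conversely, if $T$ is homogeneous, then for each $\Psi\in\text{\rm Aut}(B(\cH)^n_1)$ the tuple $\Psi(T)$ is unitarily equivalent to $T$. Characteristic functions of unitarily equivalent row contractions coincide (this is immediate from the defining formula, since $U$ intertwines the $T_i$ and $T_i^*$ and carries $\cD_T,\cD_{T^*}$ onto $\cD_{\Psi(T)},\cD_{\Psi(T)^*}$), so $\Theta_{\Psi(T)}$ coincides with $\Theta_T$; combining with the transformation law $\Theta_{\Psi(T)}=\Theta_T\circ\Psi^{-1}$ (coincidence), we conclude $\Theta_T\circ\Psi^{-1}$ coincides with $\Theta_T$.

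The main obstacle is making the transformation law for $\Theta_{\Psi(T)}$ versus $\Theta_T\circ\Psi^{-1}$ precise and citable for \emph{all} of $\text{\rm Aut}(B(\cH)^n_1)$, not just for $\|T\|<1$ where $\Theta_T$ is genuinely free holomorphic on a neighborhood of the closed ball: one must pass from the holomorphic identity on $[B(\cH)^n]_\gamma$ (which holds when $\Psi_\lambda$ is holomorphic there) to the boundary via the SOT-limit $r\to 1$ that defines $\Theta_T$ on c.n.c.\ contractions, which is exactly why c.n.c.\ (and the fact that automorphisms preserve it) is the right hypothesis. I expect this to be handled by the machinery of \cite{Po-automorphism} together with the $F_n^\infty$-functional calculus, and would structure the proof to isolate that one citation.
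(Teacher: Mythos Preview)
Your proposal is correct and follows essentially the same route as the paper: cite the transformation law from \cite{Po-automorphism} to identify $\Theta_{\Psi(T)}$ with $\Theta_T\circ\Psi^{-1}$ (up to unitaries on the defect spaces), note that both $T$ and $\Psi(T)$ are c.n.c., and then invoke Theorem~5.4 of \cite{Po-charact} to pass between unitary equivalence and coincidence of characteristic functions. The paper's proof is terser---it simply asserts that $T$ and $\Psi(T)$ are c.n.c.\ and that the transformation law holds---whereas you spell out the c.n.c.\ preservation argument and both implications separately, but the logical skeleton is identical.
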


\begin{proof}    Let $\Psi :=\Phi_U \circ
\Psi_\lambda$ be a  free holomorphic automorphism of $[B(\cH)^n]_1$,
where $U$ is a unitary operator on $\CC^n$ and $\lambda\in \BB_n$.
According to \cite{Po-automorphism},
    the   characteristic function has
the property that
$$
\Theta_{\Psi(T)}(X)=-(I_\cG\otimes \Omega^*)(\Theta_T\circ
\Psi^{-1})(X) (I_\cG \otimes \Omega_* {\bf U}), \qquad X\in
[B(\cG)^n]_1,
$$
 where $\Omega$ and $\Omega_*$ are the
unitary operators. Therefore,
 $\Theta_T\circ \Psi^{-1}$
coincides with the characteristic function $\Theta_{\Psi(T)}$ for
any
 $\Psi\in \text{\rm Aut}(B(\cH)^n_1)$.    Since $T$ and $\Psi(T)$
  are c.n.c. row contractions, we can apply Theorem 5.4 from \cite{Po-charact},
  to deduce that  $T$ is homogeneous if and only if $\Theta_T$ coincides with
  $\Theta_{\Psi(T)}$. Consequently,
 $T$ is homogeneous if and only if $\Theta_T$ coincides with $\Theta_T\circ \Psi^{-1}$
 for any $\Psi\in \text{\rm Aut}(B(\cH)^n_1)$.
The proof is complete.
\end{proof}

\begin{lemma} \label{prod} Let $\Phi_k, \Phi, \Gamma_p$, and  $\Gamma$ be
in the automorphism group $\text{\rm Aut}(B(\cH)^n_1)$, where $k,p\in \NN$.  If $\Phi_k\to
\Phi$ and $\Gamma_p\to \Gamma$ uniformly on $[B(\cH)^n]_1^-$, then
$\Phi_k\circ \Gamma_p\to \Phi\circ \Gamma$ uniformly on
$[B(\cH)^n]_1^-$, as $k,p\to\infty$.
\end{lemma}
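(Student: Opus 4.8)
The plan is to reduce the convergence of the composition $\Phi_k\circ\Gamma_p$ to two facts: the uniform convergence of $\Phi_k$ and of $\Gamma_p$, plus an equicontinuity statement for the family $\{\Phi_k\}$ on the closed ball. Write $\Phi_k\circ\Gamma_p-\Phi\circ\Gamma = (\Phi_k\circ\Gamma_p-\Phi\circ\Gamma_p)+(\Phi\circ\Gamma_p-\Phi\circ\Gamma)$. The first term is bounded in norm by $\sup_k\|\Phi_k-\Phi\|_\infty$ (evaluated on the image of $\Gamma_p$, which lies in $[B(\cH)^n]_1^-$), so it tends to $0$ as $k\to\infty$ uniformly in $p$. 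For the second term I would use that $\Phi$, being a free holomorphic automorphism, has the form $\Phi=\Phi_U\circ\Psi_\mu$ with $\mu=\Phi^{-1}(0)\in\BB_n$; since $\Phi_U$ is linear (a right multiplication by a unitary on $\CC^n$) it is trivially Lipschitz on the ball, and $\Psi_\mu$ is a homeomorphism of $[B(\cH)^n]_1^-$ given by an explicit formula that is norm-continuous — indeed uniformly continuous on the compact-in-the-relevant-sense closed ball, or more concretely one can estimate the modulus of continuity directly from the resolvent expression for $\Psi_\mu$. Hence $\|\Phi(\Gamma_p(X))-\Phi(\Gamma(X))\|\le\omega_\Phi(\|\Gamma_p(X)-\Gamma(X)\|)\le\omega_\Phi(\|\Gamma_p-\Gamma\|_\infty)\to 0$, where $\omega_\Phi$ is the modulus of continuity of $\Phi$ on $[B(\cH)^n]_1^-$.

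The key steps in order: (1) Record that for each fixed automorphism $\Psi=\Phi_V\circ\Psi_\nu$ the map is norm-uniformly continuous on $[B(\cH)^n]_1^-$; the only content here is to control $\Psi_\nu$, and since $\|\nu\|_2<1$ the operator $(I_\cH-\sum\bar\nu_iX_i)^{-1}$ has norm bounded by $(1-\|\nu\|_2)^{-1}$ uniformly over $X\in[B(\cH)^n]_1^-$, so a standard resolvent-difference estimate gives a Lipschitz bound. (2) Split the difference as above. (3) Estimate the first piece by $\|\Phi_k-\Phi\|_\infty\to 0$, using that $\Gamma_p([B(\cH)^n]_1^-)\subseteq[B(\cH)^n]_1^-$ by property (iv) of $\Psi_\lambda$ (and invariance under $\Phi_U$). (4) Estimate the second piece by the uniform continuity of the \emph{single} automorphism $\Phi$ applied to the bound $\|\Gamma_p-\Gamma\|_\infty\to 0$. (5) Combine: given $\varepsilon>0$, first pick $p$ large so the second piece is $<\varepsilon/2$ for all further purposes, then $k$ large so the first piece is $<\varepsilon/2$; since both bounds are uniform in $X$, this yields $\|\Phi_k\circ\Gamma_p-\Phi\circ\Gamma\|_\infty<\varepsilon$ for $k,p$ large.

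The main obstacle I anticipate is step (1)–(4) interplay regarding \emph{which} map needs uniform continuity: it is essential that we only invoke continuity of the fixed limit automorphism $\Phi$, not a uniform modulus of continuity across the whole sequence $\{\Phi_k\}$ — otherwise we would need equicontinuity of $\{\Phi_k\}$, which is less immediate. The cross term is handled purely by $\sup_X\|\Phi_k(Y)-\Phi(Y)\|\le\|\Phi_k-\Phi\|_\infty$ with no continuity of $\Phi_k$ needed, so the argument goes through cleanly. A secondary technical point is to make the resolvent estimate for $\Psi_\nu$ genuinely uniform over the closed ball; this is where one uses that the limit parameter $\nu=\Phi^{-1}(0)$ satisfies $\|\nu\|_2<1$ strictly, so the geometric-series bound for $(I-\sum\bar\nu_iX_i)^{-1}$ is uniform. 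Everything else is bookkeeping with the triangle inequality.
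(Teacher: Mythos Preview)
Your proof is correct and follows essentially the same route as the paper: the same triangle-inequality splitting $(\Phi_k\circ\Gamma_p-\Phi\circ\Gamma_p)+(\Phi\circ\Gamma_p-\Phi\circ\Gamma)$, the first piece bounded by $\|\Phi_k-\Phi\|_\infty$ using $\Gamma_p([B(\cH)^n]_1^-)\subseteq[B(\cH)^n]_1^-$, and the second handled via uniform continuity of the single limit automorphism $\Phi$. The paper simply asserts that $\Phi$ is uniformly continuous on $[B(\cH)^n]_1^-$, whereas you supply the explicit Lipschitz estimate for $\Psi_\mu$ via the resolvent bound $\|(I-\sum\bar\mu_iX_i)^{-1}\|\le(1-\|\mu\|_2)^{-1}$; this is a welcome elaboration but not a different argument.
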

\begin{proof}
Since $\Phi\in \text{\rm Aut}(B(\cH)^n_1)$, it is uniformly continuous on
$[B(\cH)^n]_1^-$. Hence, for any $\epsilon>0$, there is $\delta>0$ such that
$\|\Phi(Y)-\Phi(Z)\|<\frac{\epsilon}{2}$ for any $Y,Z\in [B(\cH)^n]_1^-$ with $\|Y-Z\|<\delta$. Taking into account that $\Gamma_p\to \Gamma$ uniformly on $[B(\cH)^n]_1^-$, we find $N\in \NN$ such that $\|\Gamma_p-\Gamma\|_\infty<\delta$ for any $p\geq N$.
Hence, we have
$$ \|\Phi(\Gamma_p(X))-\Phi(\Gamma(X))\|<\frac{\epsilon}{2}$$
for any $X\in [B(\cH)^n]_1^-$ and $p\geq N$.
Consequently, we have
\begin{equation*}
\begin{split}
\|(\Phi_k\circ \Gamma_p)(X)-(\Phi\circ \Gamma)(X)\|&\leq
\|(\Phi_k-\Phi)(\Gamma_p(X))\|+\|\Phi(\Gamma_p(X))-\Phi(\Gamma(X))\| \\
&\leq \|\Phi_k-\Phi\|_\infty+ \frac{\epsilon}{2}
\end{split}
\end{equation*}
for any $X\in [B(\cH)^n]_1^-$, $k\in\NN$, and $p\geq N$. Since $\|\Phi_k-\Phi\|_\infty\to 0$ as $k\to \infty$, there is $M\in \NN$ such that $\|\Phi_k-\Phi\|_\infty<\frac{\epsilon}{2}$ for any $k\geq M$. Combining these inequalities, we deduce that
$\|\Phi_k\circ \Gamma_p-\Phi\circ \Gamma\|_\infty <\epsilon$ for any $p\geq N$ and
$k\geq M$, which completes the proof.
\end{proof}

Let $\phi,\psi\in  \text{\rm Aut}(B(\cH)^n_1)$ and define
$$
d_\cE(\phi,\psi):=\|\phi -\psi\|_\infty +
\|\phi^{-1}(0)-\psi^{-1}(0)\|.
$$
One can easily check that $d_\cE$ is a metric on $\text{\rm Aut}(B(\cH)^n_1)$.

\begin{lemma} \label{conv-equi} Let $\Phi_k=\Phi_{U^{(k)}}\circ \Psi_{\lambda^{(k)}}$, $k\in \NN$,
and $\Phi=\Phi_U\circ \Psi_\lambda$ be free holomorphic automorphisms of the
 noncommutative ball $[B(\cH)^n]_1$, where $U^{(k)}, U\in \cU(\CC^n)$ and $\lambda^{(k)},\lambda\in \BB_n$.
Then the following statements are equivalent:
\begin{enumerate}
\item[(i)] $\Phi_k\to \Phi$ in the metric $d_\cE$;
\item[(ii)] $U^{(k)}\to U$ in $B(\CC^n)$ and $\lambda^{(k)}\to \lambda$
in the Euclidean norm of $\BB_n$;
    \item[(iii)] $\Phi_{U^{(k)}}\to \Phi_{U}$ and $\Psi_{\lambda^{(k)}}\to \Psi_\lambda$
     uniformly on  $[B(\cH)^n]_1^-$.
\end{enumerate}
\end{lemma}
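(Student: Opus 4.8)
The plan is to prove the cycle (i) $\Rightarrow$ (ii) $\Rightarrow$ (iii) $\Rightarrow$ (i), after isolating two elementary facts. Throughout I will use that $\Phi_U$ is linear and $\Psi_\lambda$ is an involution with $\Psi_\lambda(0)=\lambda$: hence $\Phi^{-1}=\Psi_\lambda\circ\Phi_{U^{-1}}$ and $\Phi^{-1}(0)=\Psi_\lambda(0)=\lambda$, and likewise $\Phi_k^{-1}(0)=\lambda^{(k)}$, so that
\[
d_\cE(\Phi_k,\Phi)=\|\Phi_k-\Phi\|_\infty+\|\lambda^{(k)}-\lambda\|;
\]
and that composing $\Phi_k=\Phi_{U^{(k)}}\circ\Psi_{\lambda^{(k)}}$ on the right with the involution $\Psi_{\lambda^{(k)}}$ yields $\Phi_{U^{(k)}}=\Phi_k\circ\Psi_{\lambda^{(k)}}$.

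The first elementary fact is that $\Phi_{U^{(k)}}\to\Phi_U$ uniformly on $[B(\cH)^n]_1^-$ if and only if $U^{(k)}\to U$ in $B(\CC^n)$. Indeed $\Phi_{U^{(k)}}(X)-\Phi_U(X)=[X_1,\ldots,X_n]\bigl((U^{(k)}-U)\otimes I_\cH\bigr)$, so $\|\Phi_{U^{(k)}}-\Phi_U\|_\infty\le\|U^{(k)}-U\|$ since every $X\in[B(\cH)^n]_1^-$ has $\|[X_1,\ldots,X_n]\|\le1$, while evaluating on the scalar $n$-tuples $X=(\zeta_1 I_\cH,\ldots,\zeta_n I_\cH)$ with $\|\zeta\|_2\le1$ gives the reverse inequality; thus in fact $\|\Phi_{U^{(k)}}-\Phi_U\|_\infty=\|U^{(k)}-U\|$. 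The second elementary fact, the technical heart of the lemma, is that $\lambda^{(k)}\to\lambda$ in $\BB_n$ implies $\Psi_{\lambda^{(k)}}\to\Psi_\lambda$ uniformly on $[B(\cH)^n]_1^-$. Fix $r<1$ so that $\lambda$ and all but finitely many $\lambda^{(k)}$ lie in $K:=\{z\in\CC^n:\|z\|_2\le r\}\subset\BB_n$. For $X\in[B(\cH)^n]_1^-$ and $\mu\in K$ one has $\|\sum_i\bar\mu_i X_i\|\le\|\mu\|_2\le r<1$, hence $(I_\cH-\sum_i\bar\mu_i X_i)^{-1}$ exists with norm $\le(1-r)^{-1}$, and the resolvent identity gives, uniformly over $X\in[B(\cH)^n]_1^-$ and $\mu,\nu\in K$,
\[
\Bigl\|\bigl(I_\cH-\textstyle\sum_i\bar\mu_i X_i\bigr)^{-1}-\bigl(I_\cH-\textstyle\sum_i\bar\nu_i X_i\bigr)^{-1}\Bigr\|\le(1-r)^{-2}\|\mu-\nu\|_2 .
\]
Since the scalar $\Delta_\mu=(1-\|\mu\|_2^2)^{1/2}$ and the matrix $\Delta_{\mu^*}=(I-\mu^*\mu)^{1/2}$ depend continuously on $\mu\in K$, and all the factors in the defining formula for $\Psi_\mu$ are uniformly bounded on $K\times[B(\cH)^n]_1^-$, multiplying the estimates out shows $\sup_{X\in[B(\cH)^n]_1^-}\|\Psi_{\lambda^{(k)}}(X)-\Psi_\lambda(X)\|\to0$.

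Given these two facts the cycle closes quickly. (ii) $\Rightarrow$ (iii) is immediate. For (iii) $\Rightarrow$ (i): by Lemma \ref{prod} applied along the diagonal, $\Phi_k=\Phi_{U^{(k)}}\circ\Psi_{\lambda^{(k)}}\to\Phi_U\circ\Psi_\lambda=\Phi$ uniformly, so $\|\Phi_k-\Phi\|_\infty\to0$; and evaluating $\Psi_{\lambda^{(k)}}\to\Psi_\lambda$ at $X=0$ gives $\lambda^{(k)}=\Psi_{\lambda^{(k)}}(0)\to\Psi_\lambda(0)=\lambda$; hence $d_\cE(\Phi_k,\Phi)\to0$. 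For (i) $\Rightarrow$ (ii): $\|\lambda^{(k)}-\lambda\|=\|\Phi_k^{-1}(0)-\Phi^{-1}(0)\|\le d_\cE(\Phi_k,\Phi)\to0$, so $\lambda^{(k)}\to\lambda$ and hence, by the second fact, $\Psi_{\lambda^{(k)}}\to\Psi_\lambda$ uniformly; since also $\|\Phi_k-\Phi\|_\infty\le d_\cE(\Phi_k,\Phi)\to0$, Lemma \ref{prod} gives $\Phi_{U^{(k)}}=\Phi_k\circ\Psi_{\lambda^{(k)}}\to\Phi\circ\Psi_\lambda=\Phi_U$ uniformly, and the first fact then yields $U^{(k)}\to U$.

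The main obstacle is the second elementary fact: one must check that $\mu\mapsto\Psi_\mu$ is continuous for the topology of uniform convergence on the whole closed ball, which requires the resolvent estimates above to be made uniform in $X\in[B(\cH)^n]_1^-$ over compact subsets of $\BB_n$, together with the routine continuity of the defect factors $\Delta_\mu,\Delta_{\mu^*}$. Everything else is soft, using only Lemma \ref{prod} and the identities $\Phi^{-1}(0)=\lambda$ and $\Phi_{U^{(k)}}=\Phi_k\circ\Psi_{\lambda^{(k)}}$.
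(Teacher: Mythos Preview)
Your proof is correct and follows the same overall architecture as the paper's: isolate the equivalences $U^{(k)}\to U \Leftrightarrow \Phi_{U^{(k)}}\to\Phi_U$ uniformly and $\lambda^{(k)}\to\lambda \Rightarrow \Psi_{\lambda^{(k)}}\to\Psi_\lambda$ uniformly, then close the implications among (i)--(iii) using Lemma~\ref{prod} and the identity $\Phi_{U^{(k)}}=\Phi_k\circ\Psi_{\lambda^{(k)}}$.

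The one genuine technical difference is in your ``second elementary fact.'' The paper proves $\Psi_{\lambda^{(k)}}\to\Psi_\lambda$ uniformly by passing to the boundary function $\widehat{\Psi}_\lambda$ on the Fock space with respect to the right creation operators $R_1,\ldots,R_n$, showing $\widehat{\Psi}_{\lambda^{(k)}}\to\widehat{\Psi}_\lambda$ in norm (using $\|\sum_i\bar\lambda_iR_i\|=\|\lambda\|_2$), and then invoking the noncommutative von Neumann inequality to push this down to every $X\in[B(\cH)^n]_1^-$. You instead work directly with the defining formula for $\Psi_\mu(X)$, bounding the resolvent $(I-\sum_i\bar\mu_iX_i)^{-1}$ uniformly over $X$ in the closed ball and $\mu$ in a compact subset of $\BB_n$, and combining with continuity of $\Delta_\mu,\Delta_{\mu^*}$. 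Your route is more elementary and self-contained---it avoids the Fock-space machinery entirely---while the paper's route is conceptually cleaner once that machinery is in place and illustrates how the model operator $\widehat{\Psi}_\lambda$ controls all evaluations at once. Both yield the same conclusion with the same strength.
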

\begin{proof}
First, we prove that (ii) is equivalent to (iii).
Assume that $U^{(k)}=[u_{ij}^{(k)}]_{n\times n}$, $k\in \NN$, and $U=[u_{ij}]_{n\times n}$ are unitary matrices with scalar entries, and  $\Phi_{U^{(k)}}\to \Phi_{U}$ uniformly on  $[B(\cH)^n]_1^-$, as $k\to\infty$.
For each $j=1,\ldots,n$, denote ${\bf I}_j:=[0,\ldots,I,\ldots, 0]$, where the identity is on the $j$-position.  Since
$\|\Phi_{U^{(k)}}({\bf I}_i)-\Phi_U({\bf I}_i)\|=\left(\sum_{j=1}^n |u_{ij}^{(k)}- u_{ij}|^2\right)^{1/2}$, it is clear that, for each $i,j\in \{1,\ldots,n\}$, $u_{ij}^{(k)}\to u_{ij}$ as $k\to\infty$. Hence, $U^{(k)}\to U$ in $B(\CC^n)$.
Conversely, assume that the latter condition holds. Since
$\|\Phi_{U^{(k)}}(X)-\Phi_U(X)\|\leq \|X\|\|U^{(k)}-U\|$ for any $X=[X_1,\ldots,X_n]\in [B(\cH)^n]_1^-$, we deduce that  $\Phi_{U^{(k)}}\to \Phi_{U}$ uniformly on
$[B(\cH)^n]_1^-$.

Now we prove that $\lambda^{(k)}\to \lambda$
in the Euclidean norm of $\BB_n$ if and only if $\Psi_{\lambda^{(k)}}\to \Psi_\lambda$
     uniformly on  $[B(\cH)^n]_1^-$. Since $\Psi_{\lambda^{(k)}}(0)=\lambda^{(k)}$ and $\Psi_\lambda(0)=\lambda$,  one implication is clear. To prove the converse, assume that $\lambda^{(k)}\to \lambda$
in the Euclidean norm of $\BB_n$.
Since the right creation operators $R_1,\ldots,R_n$ are isometries with orthogonal ranges, we have
$$
\left\|\sum_{i=1}^n \overline{\lambda}_i R_i\right\|=
\left\|\left( \sum_{i=1}^n {\lambda}_i R_i^*\right) \left(\sum_{i=1}^n \overline{\lambda}_i R_i\right)\right\|^{1/2}=\left(\sum_{i=1}^n |\lambda_i|^2\right)^{1/2}<1.
$$
Consequently, $\left(\sum_{i=1}^n \overline{\lambda}^{(k)}_i R_i\right)^{-1}$ converges to $\left(\sum_{i=1}^n \overline{\lambda}_i R_i\right)^{-1}$, as $k\to \infty$, in the operator norm.
Taking into account that
$$\widehat{\Psi}_\lambda=\lambda -\Delta_\lambda \left(I-\sum_{i=1}^n \overline{\lambda}_i R_i\right)^{-1}[R_1,\ldots,R_n]\Delta_{\lambda^*}
$$
and a similar relation holds for $\widehat{\Psi}_{\lambda^{(k)}}$, we deduce that
$\widehat{\Psi}_{\lambda^{(k)}}\to \widehat{\Psi}_\lambda$ in the operator norm.
Due to the noncommutative von Neumann inequality \cite{Po-von}, we have
$\|\Psi_{\lambda^{(k)}}(X)-\Psi_\lambda(X)\|\leq \|\widehat{\Psi}_{\lambda^{(k)}} - \widehat{\Psi}_\lambda\|
$
for any   $X=[X_1,\ldots,X_n]\in [B(\cH)^n]_1^-$. Hence,  $\Psi_{\lambda^{(k)}}\to \Psi_\lambda$
     uniformly on  $[B(\cH)^n]_1^-$, which proves our assertion. Therefore, (ii) is equivalent to (iii).

     Now, we prove that (i)$\implies$(ii). Assume that $d_\cE(\Phi_k,\Phi)\to 0$ as $k\to\infty$. Hence, $\Phi_k\to \Phi$ uniformly on $[B(\cH)^n]_1^-$ and
     $\lambda^{(k)}=\Phi_k^{-1}(0)\to \lambda=\Phi^{-1}(0)$ in $\BB_n$. Consequently, as proved above, we have that $\Psi_{\lambda^{(k)}}\to \Psi_\lambda$
     uniformly on  $[B(\cH)^n]_1^-$. Using Lemma \ref{prod} and the fact that
     $\Phi_k=\Phi_{U^{(k)}}\circ \Psi_{\lambda^{(k)}}$, $k\in \NN$,
and $\Phi=\Phi_U\circ \Psi_\lambda$, we deduce that
$$
\Phi_{U^{(k)}}=\Phi_k\circ \Psi_{\lambda^{(k)}}\to \Phi\circ \Psi_\lambda=\Phi_U
$$
uniformly on $[B(\cH)^n]_1^-$. Hence, $U^{(k)}\to U$ in $B(\CC^n)$ and, therefore, (ii) holds.

 It remains to prove that (ii)$\implies$(i).  Assume that (ii) holds. As proved above,
 $\Phi_{U^{(k)}}\to \Phi_{U}$ and $\Psi_{\lambda^{(k)}}\to \Psi_\lambda$
     uniformly on  $[B(\cH)^n]_1^-$. By Lemma \ref{prod}, we deduce that
     $$ \Phi_k=\Phi_{U^{(k)}} \circ \Psi_{\lambda^{(k)}}\to \Phi=\Phi_U\circ \Psi_\lambda
     $$
uniformly on  $[B(\cH)^n]_1^-$.  On the other hand, we have
$\Phi_k^{-1}(0)=\lambda^{(k)}\to \lambda=\Phi^{-1}(0)$ in $\BB_n$. Now, one can easily see that $d_\cE(\Phi_k,\Phi)\to 0$ as $k\to\infty$. The proof is complete.
\end{proof}

After these preliminaries, we can prove the following

\begin{theorem} \label{Aut}
The free holomorphic automorphism group \
 $\text{\rm Aut}(B(\cH)^n_1)$ is a   $\sigma$-compact, locally
compact topological group with respect to the topology induced by the metric $d_\cE$.
\end{theorem}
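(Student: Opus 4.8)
The plan is to transport the problem from the abstract group to a concrete parameter space. By \cite{Po-automorphism}, every $\Psi\in\text{\rm Aut}(B(\cH)^n_1)$ factors uniquely as $\Psi=\Phi_U\circ\Psi_\lambda$ with $U\in\cU(\CC^n)$ and $\lambda=\Psi^{-1}(0)\in\BB_n$, so the assignment $\Psi\mapsto(U,\lambda)$ is a bijection
$$
\text{\rm Aut}(B(\cH)^n_1)\longleftrightarrow \cU(\CC^n)\times\BB_n.
$$
The first step is to observe that, by Lemma \ref{conv-equi}, a sequence $\Phi_k$ converges to $\Phi$ in the metric $d_\cE$ if and only if the corresponding parameters $(U^{(k)},\lambda^{(k)})$ converge to $(U,\lambda)$ in $B(\CC^n)\times\BB_n$. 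Since $d_\cE$ is a metric (as noted just before Lemma \ref{conv-equi}), this shows the above bijection is in fact a homeomorphism onto $\cU(\CC^n)\times\BB_n$, equipped with the product of the operator-norm topology and the Euclidean topology.

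From here the topological conclusions are immediate. The unitary group $\cU(\CC^n)$ is compact, and the open ball $\BB_n$ is $\sigma$-compact (it is the increasing union $\bigcup_{j\ge 1}\{\lambda:\|\lambda\|_2\le 1-1/j\}$ of compact balls) and locally compact. Hence the product $\cU(\CC^n)\times\BB_n$ is $\sigma$-compact and locally compact, and therefore so is $\text{\rm Aut}(B(\cH)^n_1)$. It also follows that the space is metrizable, with $d_\cE$ as a compatible metric, and a candidate exhaustion by compact sets is
$$
\cK_j:=\{\Phi_U\circ\Psi_\lambda:\ U\in\cU(\CC^n),\ \|\lambda\|_2\le 1-\tfrac1j\},\qquad j\ge 1.
$$

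It remains to check that $\text{\rm Aut}(B(\cH)^n_1)$ is a \emph{topological} group, i.e., that multiplication $(\Phi,\Gamma)\mapsto\Phi\circ\Gamma$ and inversion $\Phi\mapsto\Phi^{-1}$ are continuous in $d_\cE$. Continuity of multiplication in the $\|\cdot\|_\infty$ component is exactly Lemma \ref{prod}; one then needs continuity of $(\Phi,\Gamma)\mapsto(\Phi\circ\Gamma)^{-1}(0)=\Gamma^{-1}(\Phi^{-1}(0))$, which I would derive by combining $\Phi_k^{-1}(0)\to\Phi^{-1}(0)$ in $\BB_n$ (part of $d_\cE$-convergence) with the fact that $\Gamma_p\to\Gamma$ uniformly on $[B(\cH)^n]_1^-$ forces $\Gamma_p^{-1}\to\Gamma^{-1}$ uniformly there (apply Lemma \ref{conv-equi} to the inverse automorphisms, whose parameters are $(U^{-1},-U\lambda)$ or similar, which depend continuously on $(U,\lambda)$). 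For inversion, one checks directly that $\Phi_k\to\Phi$ in $d_\cE$ implies the parameters of $\Phi_k^{-1}$ converge to those of $\Phi^{-1}$, again via Lemma \ref{conv-equi}. I expect the main obstacle to be bookkeeping the non-commutative composition law $\Phi_U\circ\Psi_\lambda$ under inversion and products — writing $(\Phi_U\circ\Psi_\lambda)^{-1}$ and $(\Phi_{U_1}\circ\Psi_{\lambda_1})\circ(\Phi_{U_2}\circ\Psi_{\lambda_2})$ back in the normal form $\Phi_{U'}\circ\Psi_{\lambda'}$ and verifying that $(U_1,\lambda_1,U_2,\lambda_2)\mapsto(U',\lambda')$ is continuous — but since the ingredients ($\Phi_U$ composition, the explicit formula for $\Psi_\lambda$, and the involution property $\Psi_\lambda\circ\Psi_\lambda=\mathrm{id}$) are all available from \cite{Po-automorphism} and the continuity of each building block follows from Lemma \ref{conv-equi} and Lemma \ref{prod}, this is a finite, if slightly tedious, verification rather than a genuine difficulty.
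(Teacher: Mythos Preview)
Your approach is essentially the same as the paper's: establish the homeomorphism with $\cU(\CC^n)\times\BB_n$ via Lemma \ref{conv-equi}, read off $\sigma$-compactness and local compactness from the parameter space, and verify continuity of multiplication and inversion using Lemmas \ref{prod} and \ref{conv-equi}. The paper does these in the opposite order (group operations first, then the homeomorphism), but that is cosmetic.

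One small point worth tightening: for continuity of multiplication in the second coordinate of $d_\cE$, you propose to show $\Gamma_p^{-1}\to\Gamma^{-1}$ uniformly and then evaluate at $\Phi_k^{-1}(0)$. This works, but it tacitly uses continuity of inversion inside the proof of continuity of multiplication, so you should prove inversion first. The paper sidesteps this by computing $(\Phi_k\circ\Gamma_p)^{-1}(0)$ directly as $\Psi_{\mu^{(p)}}\bigl(\lambda^{(k)}(W^{(p)})^*\bigr)$ and showing convergence of this expression, which is slightly cleaner. Also, your guessed parameters ``$(U^{-1},-U\lambda)$'' for $\Phi^{-1}$ are not quite right: since $\Psi_\lambda$ is an involution, $\Phi^{-1}=\Psi_\lambda\circ\Phi_{U^*}$, and putting this back in normal form gives $\Phi^{-1}(0)=\Phi(0)=\lambda U$ (no minus sign). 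You correctly flagged this as the bookkeeping step; the paper carries it out exactly this way.
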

\begin{proof} First, we prove that the map
$$
\text{\rm Aut}(B(\cH)^n_1\times \text{\rm Aut}(B(\cH)^n_1\ni
(\Phi,\Gamma)\mapsto \Phi\circ\Gamma\in \text{\rm Aut}(B(\cH)^n_1 $$
is continuous when $\text{\rm Aut}(B(\cH)^n_1$ has the topology
induced by the metric $d_\cE$. For $k,p\in \NN$,  let

\begin{equation*}
\begin{split}
\Phi_k&=\Phi_{U^{(k)}}\circ \Psi_{\lambda^{(k)}},\quad
\Gamma_p=\Phi_{W^{(p)}}\circ \Psi_{\mu^{(p)}},\\
\Phi&=\Phi_U\circ \Psi_\lambda,\qquad \quad \ \,  \Gamma=\Phi_W\circ
\Psi_\mu,
\end{split}
\end{equation*}
be free holomorphic automorphisms of  $[B(\cH)^n]_1$, in
standard decomposition. Then $U^{(k)}, W^{(p)}, U,W$ are unitary
operators on $\CC^n$ and $\lambda^{(k)}, \mu^{(p)}, \lambda,\mu$ are
in $\BB_n$ satisfying relations
$$
\lambda^{(k)}=\Phi^{-1}_k(0),\ \mu^{(p)}=\Gamma_p^{-1}(0), \
\lambda=\Phi^{-1}(0),  \text{ and } \mu=\Gamma^{-1}(0).
$$
Since $\Phi_k\circ \Gamma_p\in \text{\rm Aut}(B(\cH)^n_1$, it has
the standard representation
\begin{equation}
 \label{phik}
 \Phi_k\circ
\Gamma_p=\Phi_{\Omega^{(kp)}}\circ \Psi_{z^{(kp)}}
\end{equation}
for some unitary operator $\Omega^{(kp)}\in \cU(\CC^n)$ and
$z^{(kp)}\in \BB_n$. Note that
$$
z^{(kp)}=(\Phi_k\circ \Gamma_p)^{-1}(0)= (\Psi_{\mu^{(p)}}^{-1}\circ
\Phi_{W^{(p)}}^{-1}\circ
\Phi_k^{-1})(0)=\Psi_{\mu^{(p)}}\left(\lambda^{(k)}
{W^{(p)}}^*\right).
$$
Similarly, since $\Phi\circ \Gamma \in \text{\rm Aut}(B(\cH)^n_1$,
we have $\Phi\circ \Gamma=\Phi_\Omega \circ \Psi_z$ for some $\Omega\in
\cU(\CC^n)$ and $z=\Psi_\mu(\lambda W^*)\in \BB_n$. Assume that
$d_\cE(\Phi_k, \Phi)\to 0$ as $k\to\infty$ and
$d_\cE(\Gamma_p,\Gamma)\to 0$ as $p\to \infty$.  According to Lemma
\ref{conv-equi}, $\lambda^{(k)}\to \lambda$ in $\BB_n$ and
$W^{(p)}\to W$ in $B(\CC^n)$.  Hence, $\lambda^{(k)} {W^{(p)}}^*\to
\lambda W^*$ in $B(\CC^n)$. Applying again Lemma \ref{conv-equi}, we
deduce that $\Psi_{\mu^{(p)}}\to \Psi_\mu$ uniformly  on
$[B(\cH)^n]_1^-$. Consequently,

$$
z^{(kp)}=\Psi_{\mu^{(p)}}\left(\lambda^{(k)} {W^{(p)}}^*\right)\to
z=\Psi_\mu(\lambda W^*)\in \BB_n
$$
as  $k,p\to \infty$. This implies that $\Psi_{z^{(kp)}}\to \Psi_z$
uniformly on $[B(\cH)^n]_1^-$. On the other hand, since $\Phi_k\to
\Phi$ and $\Gamma_p\to \Gamma$ uniformly on $[B(\cH)^n]_1^-$, Lemma
\ref{prod} shows that $\Phi_k\circ \Gamma_p\to \Phi\circ \Gamma$
uniformly on $[B(\cH)^n]_1^-$ as $k,p\to \infty$. Now, by  relation \eqref{phik} and Lemma
\ref{prod}, we deduce that
$$
\Phi_{\Omega^{(kp)}}=(\Phi_k\circ \Gamma_p)\circ \Psi_{z^{(kp)}} \to
(\Phi\circ \Gamma)\circ \Psi_z=\Phi_\Omega
$$
uniformly on $[B(\cH)^n]_1^-$.  This implies that $\Omega^{(kp)}\to
\Omega$ in $B(\CC^n)$ as $k,p\to\infty$. Using again Lemma
\ref{conv-equi}, we conclude that $\Phi_k\circ \Gamma_p\to \Phi\circ
\Gamma$, which proves our assertion.

  In what follows, we show that the map $\Phi\mapsto \Phi^{-1}$ is
  continuous on $\text{\rm Aut}(B(\cH)^n_1)$ with the topology
  induced by the metric $d_\cE$.
Assume that   $d_\cE(\Phi_k, \Phi)\to 0$  as $k\to\infty$.
Using the same notations as above, we have
$\Phi_{U^{(k)}}\to \Phi_U$ and $\Psi_{\lambda^{(k)}}\to \Psi_\lambda$ uniformly on $[B(\cH)^n]^-_1$. Applying Lemma \ref{prod}, we deduce that
\begin{equation}
\label{phi-inv}
\Phi_k^{-1}=\Psi_{\lambda^{(k)}}\circ \Phi_{{U^{(k)}}^*}
\to \Psi_\lambda\circ \Phi_{U^*}=\Phi^{-1}
\end{equation}
uniformly on $[B(\cH)^n]^-_1$, as $k\to\infty$.
On the other hand, we have the standard representations
$\Phi_k^{-1}=\Phi_{W^{(k)}}\circ \Psi_{z^{(k)}}$ and
$\Phi_k^{-1}=\Phi_{W}\circ \Psi_{z}$ for some unitary operators $W^{(k)},W\in B(\CC^n)$ and $z^{(k)}, z\in \BB_n$. Note that
$z^{(k)}=\Phi_k(0)=(\Phi_{U^{(k)}}\circ \Psi_{\lambda^{(k)}})(0)=\lambda^{(k)} U^{(k)}$ and $z=\Phi(0)=\lambda U$. Since $\lambda^{(k)}\to \lambda$ in $\BB_n$, we have $z^{(k)}\to z$ in $\BB_n$, which implies $\Psi_{z^{(k)}}\to \Psi_z$ uniformly
on $[B(\cH)^n]^-_1$, as $k\to\infty$.
Using relation \eqref{phi-inv} and Lemma \ref{prod}, we deduce that
$$
\Phi_{W^{(k)}}=\Phi_k^{-1}\circ \Phi_{z^{(k)}}\to \Phi_W=\Phi^{-1}\circ \Psi_z
$$
uniformly on $[B(\cH)^n]^-_1$. Applying  Lemma \ref{conv-equi}, we conclude that $\Phi_k^{-1}\to \Phi^{-1}$  in the  topology
induced by the metric $d_\cE$.

Each free holomorphic automorphism $\Phi \in \text{\rm Aut}(B(\cH)^n_1$ has a unique representation
$\Phi=\Phi_U\circ \Psi_\lambda$, where $\lambda:=\Phi^{-1}(0)$ and  $U\in \cU(\CC^n)$.
This generates  a bijection
$\chi: \text{\rm Aut}(B(\cH)^n_1\to \cU(\CC^n)\times \BB_n$ by setting
$\chi(\Phi):=(U,\lambda)$.
According to Lemma \ref{conv-equi}, the map $\chi$ is a homeomorphism of topological spaces, where $\text{\rm Aut}(B(\cH)^n_1$ has the topology induced by the metric $d_\cE$ and  $\cU(\CC^n)\times \BB_n$ has the natural topology. Consequently, since
$\cU(\CC^n)\times \BB_n$ is a $\sigma$-compact, locally compact topological space, so is  the automorphism  group $\text{\rm Aut}(B(\cH)^n_1$. The proof is complete.
\end{proof}

\begin{corollary} The free holomorphic automorphism group \
 $\text{\rm Aut}(B(\cH)^n_1)$ is  path connected.
\end{corollary}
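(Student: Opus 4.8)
The plan is to reduce the statement to the classical path-connectedness of the unitary group and of the Euclidean ball, exploiting the homeomorphism
$$\chi:\text{\rm Aut}(B(\cH)^n_1)\to \cU(\CC^n)\times \BB_n,\qquad \chi(\Phi_U\circ\Psi_\lambda)=(U,\lambda),$$
isolated in the proof of Theorem \ref{Aut}. Since a homeomorphic image of a path-connected space is path-connected and a finite product of path-connected spaces is path-connected, it suffices to join, for arbitrary $(U_0,\lambda_0),(U_1,\lambda_1)\in \cU(\CC^n)\times\BB_n$, the points $U_0,U_1$ inside $\cU(\CC^n)$ and $\lambda_0,\lambda_1$ inside $\BB_n$.

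For the unitary factor I would invoke the spectral theorem on $\CC^n$: each $U_j$ can be written as $U_j=e^{iA_j}$ with $A_j=A_j^*\in B(\CC^n)$ (diagonalize $U_j$ and pick a branch of $\log$ on the unit circle), and then $t\mapsto e^{i((1-t)A_0+tA_1)}$, $t\in[0,1]$, is a norm-continuous path in $\cU(\CC^n)$ from $U_0$ to $U_1$. For the ball factor, convexity of $\BB_n$ makes the segment $t\mapsto (1-t)\lambda_0+t\lambda_1$ a path inside $\BB_n$, continuous in the Euclidean norm. Pulling these back through $\chi^{-1}$ --- equivalently, forming $t\mapsto \Phi_{U(t)}\circ\Psi_{\lambda(t)}$ --- produces a path in $\text{\rm Aut}(B(\cH)^n_1)$ joining $\Phi_{U_0}\circ\Psi_{\lambda_0}$ to $\Phi_{U_1}\circ\Psi_{\lambda_1}$, and since every free holomorphic automorphism has the form $\Phi_U\circ\Psi_\lambda$, this proves the corollary.

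I do not anticipate a real obstacle; the one point requiring a moment's care is the $d_\cE$-continuity of $t\mapsto \Phi_{U(t)}\circ\Psi_{\lambda(t)}$. This is exactly the implication (ii)$\implies$(i) of Lemma \ref{conv-equi}, but that lemma is phrased for sequences, so I would either restate it for a continuous parameter or simply observe that sequential continuity suffices because $[0,1]$ is metrizable: for $t_k\to t$ one has $U(t_k)\to U(t)$ in $B(\CC^n)$ and $\lambda(t_k)\to\lambda(t)$ in $\BB_n$, hence $\Phi_{U(t_k)}\circ\Psi_{\lambda(t_k)}\to\Phi_{U(t)}\circ\Psi_{\lambda(t)}$ by Lemma \ref{conv-equi}. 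One could even route every automorphism to the fixed element $\Phi_{-I}\circ\Psi_0$ (the identity) to make the argument symmetric, but this refinement is not needed.
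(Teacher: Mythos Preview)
Your proposal is correct and follows essentially the same approach as the paper: both arguments reduce to the path-connectedness of $\cU(\CC^n)$ and $\BB_n$ together with Lemma \ref{conv-equi} to verify $d_\cE$-continuity of $t\mapsto \Phi_{U(t)}\circ\Psi_{\lambda(t)}$. The only cosmetic difference is that the paper writes down an explicit path from an arbitrary $\Phi_U\circ\Psi_\lambda$ to the fixed base point $\Psi_0$ (via $t\mapsto\Phi_{U_t}\circ\Psi_{t\lambda}$ with $U_0=I$), whereas you invoke the homeomorphism $\chi$ from Theorem \ref{Aut} to argue abstractly.
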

\begin{proof}
Fix a unitary operator $U\in \cU(\CC^n)$ and $\lambda\in \BB_n$. Since the unitary group $\cU(\CC^n)$ is path connected, there is a a continuous map
$[0,1]\ni t\mapsto U_t\in \cU(\CC^n)$ such that $U_0=I$ and $U_1=U$. Using Lemma \ref{conv-equi}, we deduce that the map $\varphi:[0,1]\to \text{\rm Aut}(B(\cH)^n_1)$ defined by
$\varphi(t):= \Phi_{U_t}\circ \Psi_{t\lambda}$
is continuous with respect to the metric $d_\cE$. Since $\varphi(0)=\Psi_0$ and $\varphi(1)=\Phi_U\circ \Psi_\lambda$, the proof is complete.
\end{proof}

Let $\text{\rm Aut}(B(\cH)^n_1)$ be the free holomorphic
automorphism group of the noncommutative ball $[B(\cH)^n]_1$ and let
$\cU(\cK)$ be the unitary group on the Hilbert space $\cK$.
According to Theorem \ref{Aut}, $\text{\rm Aut}(B(\cH)^n_1)$ is a topological group with respect to the metric $d_\cE$.
A  map
$\pi: \text{\rm Aut}(B(\cH)^n_1)\to \cU(\cK)$  is called (unitary)
projective representation if the following conditions are satisfied:
\begin{enumerate}
\item[(i)] $\pi(id)=I$, where $id$ is the identity on $[B(\cH)^n]_1$;

\item[(ii)] $
 \pi(\varphi) \pi(\psi)=c(\varphi,\psi) \pi({\varphi\circ \psi})$,
  for any  $\varphi,\psi\in \text{\rm Aut}(B(\cH)^n_1)$, where $c(\varphi,\psi)$
  is a complex number  with
 $|c(\varphi,\psi)|=1$;
\item[(iii)]
 the map $\text{\rm Aut}(B(\cH)^n_1)\ni \varphi\mapsto \left<\pi(\varphi)\xi,\eta\right> \in \CC$
is continuous for each $\xi,\eta\in \cK$.
\end{enumerate}

\begin{theorem}\label{projective} Any completely non-coisometric row contraction $T:=[T_1,\ldots, T_n]\in [B(\cH)^n]_1^-$ with
 constant characteristic function is homogeneous. If $T$
 is irreducible, then the following statements hold:
 \begin{enumerate}
\item[(i)] $
\varphi_i(T)=U_\varphi^* T_i U_\varphi$ for all
$\varphi\in \text{\rm Aut}(B(\cH)^n_1),
$
where  $U_\varphi\in B(F^2(H_n))$ is a unitary operator and
$$U_\varphi U_\psi=c(\varphi,\psi) U_{\varphi\circ \psi},\qquad \varphi,\psi\in \text{\rm Aut}(B(\cH)^n_1),
$$
 for some complex number  $c(\varphi,\psi)\in \TT$.
\item[(ii)]the map $\varphi\to U_\varphi^*$ is continuous from the uniform topology to the strong operator topology.
    \item[(iii)] The map $\pi: \text{\rm Aut}(B(\cH)^n_1)\to B(F^2(H_n))$ defined by $\pi(\varphi):=U_\varphi$ is
a
projective representation  of the automorphism group $\text{\rm Aut}(B(\cH)^n_1)$.
\end{enumerate}
\end{theorem}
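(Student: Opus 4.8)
The plan is to deduce the homogeneity statement directly from Theorem~\ref{homo}, then, in the irreducible case, to identify $T$ with the $n$-tuple $S:=(S_1,\dots,S_n)$ of left creation operators on $F^2(H_n)$ and to realize the implementing unitaries concretely as noncommutative Poisson kernels, reading off the projective-representation properties from the irreducibility of the Cuntz--Toeplitz algebra $C^*(S_1,\dots,S_n)$ and from norm continuity of $\varphi\mapsto K_{\varphi(S)}$. For the homogeneity claim: if $\Theta_T=\Theta_T(0)$ is constant then, for every $\Psi\in\text{\rm Aut}(B(\cH)^n_1)$, $\Theta_T\circ\Psi^{-1}$ is the same constant function and therefore coincides with $\Theta_T$ (take $\tau_1,\tau_2$ to be identity operators); since $T$ is c.n.c., Theorem~\ref{homo} gives that $T$ is homogeneous.

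Assume now, in addition, that $T$ is irreducible. Since $\deg\Theta_T=0$ and $T$ is c.n.c., Corollary~\ref{cnc-ga} (with $m=0$) shows that the nilpotent corner in the canonical triangulation of $T$ is trivial, so $T$ is a pure row isometry; equivalently $\Gamma(T)\in\NN_\infty\times\{0\}\times\{0\}$ and Theorem~\ref{invariant1} applies. Irreducibility forces multiplicity one in the Wold decomposition for isometries with orthogonal ranges, so after a fixed unitary identification we may and do assume $\cH=F^2(H_n)$ and $T=S$. As $S$ is c.n.c.\ with $\Theta_S=0$, Theorem~\ref{homo} gives that $S$ is homogeneous, so for each $\varphi\in\text{\rm Aut}(B(\cH)^n_1)$ the row contraction $\varphi(S):=(\varphi_1(S),\dots,\varphi_n(S))$ is unitarily equivalent to $S$, hence is again a pure row isometry of multiplicity one; consequently $\cD_{\varphi(S)}$ is one-dimensional, $\Delta_{\varphi(S)}$ is the orthogonal projection onto $\cD_{\varphi(S)}$, and the noncommutative Poisson kernel $K_{\varphi(S)}\colon F^2(H_n)\to F^2(H_n)\otimes\cD_{\varphi(S)}$ is unitary. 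Fixing a unit vector $\xi_\varphi\in\cD_{\varphi(S)}$ and letting $\iota_\varphi\colon\CC\to\cD_{\varphi(S)}$ be $1\mapsto\xi_\varphi$, define $U_\varphi\in B(F^2(H_n))$ to be the unitary with $K_{\varphi(S)}=(I_{F^2(H_n)}\otimes\iota_\varphi)U_\varphi$; since $K_Se_\alpha=e_\alpha\otimes 1$ for all $\alpha\in\FF_n^+$, we have $U_{\mathrm{id}}=I$.

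For (i), the intertwining relation $K_{\varphi(S)}\varphi_i(S)^*=(S_i^*\otimes I)K_{\varphi(S)}$ from \cite{Po-poisson}, carried through $\iota_\varphi$, reads $\varphi_i(S)=U_\varphi^*S_iU_\varphi$. For the multiplier relation, the composition property of the $F_n^\infty$-functional calculus \cite{Po-funct}, together with the fact that it commutes with unitary conjugation, gives
\begin{equation*}
(\varphi\circ\psi)_i(S)=\varphi_i(\psi(S))=\varphi_i\!\left(U_\psi^*S_1U_\psi,\dots,U_\psi^*S_nU_\psi\right)=U_\psi^*\varphi_i(S)U_\psi=(U_\varphi U_\psi)^*S_i(U_\varphi U_\psi);
\end{equation*}
comparing with $(\varphi\circ\psi)_i(S)=U_{\varphi\circ\psi}^*S_iU_{\varphi\circ\psi}$ (and the adjoint relations), the unitary $U_{\varphi\circ\psi}(U_\varphi U_\psi)^*$ commutes with $S_1,\dots,S_n$ and $S_1^*,\dots,S_n^*$; by irreducibility of $C^*(S_1,\dots,S_n)$ on $F^2(H_n)$ it is a scalar of modulus one, so $U_\varphi U_\psi=c(\varphi,\psi)U_{\varphi\circ\psi}$ with $c(\varphi,\psi)\in\TT$. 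For (ii), suppose $\varphi_k\to\varphi$ uniformly on $[B(\cH)^n]_1^-$; by the noncommutative von Neumann inequality \cite{Po-von}, $\varphi_{k,i}(S)\to\varphi_i(S)$ in norm, whence $\Delta_{\varphi_k(S)}^2\to\Delta_{\varphi(S)}^2$ in norm, and since these are rank-one projections, $\Delta_{\varphi_k(S)}\to\Delta_{\varphi(S)}$ in norm. Taking $\xi_k:=\Delta_{\varphi_k(S)}\xi_\varphi/\|\Delta_{\varphi_k(S)}\xi_\varphi\|$, so that $\xi_k\to\xi_\varphi$, and using $K_C^*(e_\alpha\otimes\eta)=C_\alpha\eta$ for $\eta\in\cD_C$ (valid here because $\Delta_C$ is the projection onto $\cD_C$), we obtain $U_{\varphi_k}^*e_\alpha=\varphi_k(S)_\alpha\,\xi_k\to\varphi(S)_\alpha\,\xi_\varphi=U_\varphi^*e_\alpha$ for every $\alpha\in\FF_n^+$; since the $U_{\varphi_k}^*$ are unitaries, $U_{\varphi_k}^*\to U_\varphi^*$ in the strong operator topology, which is (ii). Statement (iii) is then immediate: $\pi(\varphi):=U_\varphi$ satisfies $\pi(\mathrm{id})=I$ and the multiplier identity above, and strong convergence of the adjoints yields weak convergence of the $U_{\varphi_k}$, hence continuity of $\varphi\mapsto\langle\pi(\varphi)\xi,\eta\rangle$ for all $\xi,\eta\in F^2(H_n)$.

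The main obstacle lies in (ii): the unit vectors $\xi_\varphi$ spanning the defect lines $\cD_{\varphi(S)}$ must be selected compatibly with continuity — which is possible locally, because the projections $\Delta_{\varphi(S)}$ depend norm-continuously on $\varphi$, and local continuity of $\varphi\mapsto U_\varphi^*$ suffices — and the Poisson-kernel series must be controlled uniformly in its tail; both work out precisely because each $\varphi(S)$ is a \emph{pure isometry} with $\Delta_{\varphi(S)}$ a projection, which reduces $U_\varphi^*$ to the elementary action $e_\alpha\mapsto\varphi(S)_\alpha\xi_\varphi$ on the orthonormal basis. A secondary technical point is the justification of the composition identity $(\varphi\circ\psi)_i(S)=\varphi_i(\psi(S))$ from the functional-calculus and automorphism machinery of \cite{Po-funct,Po-automorphism}.
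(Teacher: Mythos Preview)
Your overall strategy matches the paper's: reduce to $T=(S_1,\dots,S_n)$ via the structure theory for c.n.c.\ row contractions with constant characteristic function, realize $U_\varphi$ through the Poisson kernel of the pure isometry $\varphi(S)$, and extract the cocycle from irreducibility of $C^*(S_1,\dots,S_n)$. For the multiplier identity the paper invokes the composition law for noncommutative Poisson transforms from \cite{Po-automorphism}, whereas you use the functional-calculus identity $(\varphi\circ\psi)_i(S)=\varphi_i(\psi(S))$; both routes are valid.

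There is, however, a genuine gap in your treatment of (ii). You define $U_\varphi$ by first \emph{fixing} a unit vector $\xi_\varphi\in\cD_{\varphi(S)}$ for every $\varphi$, and then in the continuity argument you \emph{re-choose} $\xi_k:=\Delta_{\varphi_k(S)}\xi_\varphi/\|\Delta_{\varphi_k(S)}\xi_\varphi\|$ so that $\xi_k\to\xi_\varphi$. These are different choices: the continuity of $\varphi\mapsto U_\varphi^*$ depends on the global assignment $\varphi\mapsto\xi_\varphi$ made at the outset, and local re-selection along a given sequence does not show that the map you actually defined is continuous. Your closing remark that ``local continuity suffices'' does not close this, because you have not produced a single globally defined, continuous section $\varphi\mapsto\xi_\varphi$. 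The paper resolves the issue by making one explicit global choice: it observes that $\Delta_{\widehat\varphi}=\Delta_{\widehat\Psi_\lambda}$ with $\lambda=\varphi^{-1}(0)$, then defines the unitary $\Lambda_\lambda:\cD_{\widehat\Psi_\lambda}\to\CC$ by $\Lambda_\lambda\Delta_{\widehat\Psi_\lambda}f=(1-\|\lambda\|_2^2)^{1/2}f(\lambda)$ and sets $U_\varphi:=W_{\widehat\varphi}K_{\widehat\varphi}$, obtaining the explicit formula
\[
U_\varphi^*\,e_\alpha=\frac{1}{(1-\|\lambda\|_2^2)^{1/2}}\,\widehat\varphi_\alpha\,\Delta_{\widehat\varphi}^2(1),
\]
whose continuity in $\varphi$ (together with $\|\lambda^{(p)}\|_2\to\|\lambda\|_2$, deduced from $\varphi^{(p)}(0)\to\varphi(0)$) is immediate. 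Your argument can be repaired either by adopting this explicit section or by noting that the defect line $\cD_{\varphi(S)}$ depends only on $\lambda=\varphi^{-1}(0)\in\BB_n$, a contractible parameter space, so a global continuous choice of $\xi_\varphi$ exists.
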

\begin{proof} The fact that $T$ is homogeneous follows from Theorem \ref{homo} using the fact that the characteristic function is constant, i.e., $\Theta_T=\Theta_T(0)$.
According to Theorem \ref{charact-const},
if $T:=[T_1,\ldots, T_n]\in [B(\cH)^n]_1^-$ is a
 c.n.c row contraction with  polynomial characteristic function,
   then  the characteristic function $\Theta_T$
is a constant  if and only if $T$ is a pure isometry.
Consequently,  if $T$ is irreducible   we can assume, without loss of generality, that $T=[S_1,\ldots, S_n]$.

Let $\varphi=(\varphi_1,\ldots, \varphi_n)\in \text{\rm Aut}(B(\cH)^n_1)$ and let $\widehat \varphi=(\widehat
\varphi_1,\ldots, \widehat\varphi_n)$ be its model boundary
function.  Note that $\widehat \varphi$ is a pure row isometry and
and $\widehat \varphi_i=\varphi_i(S_1,\ldots,S_n)$ for $i=1,\ldots,
n$. Using the noncommutative Poisson transform at $\widehat
\varphi$, we obtain
\begin{equation}\label{Poiss}
\varphi_i(S_1,\ldots, S_n)=P_{\widehat \varphi}(S_i)=K_{\widehat
\varphi}^*(S_i\otimes I_{\cD_{\widehat \varphi})})K_{\widehat
\varphi}, \qquad i=1,\ldots,n,
\end{equation}
where the Poisson kernel $K_{\widehat \varphi}:F^2(H_n)\to
  F^2(H_n)\otimes \cD_{\widehat \varphi}$ is an isometry. On the
other hand, since $\widehat \varphi^* \widehat \varphi=I$, the
characteristic function $\tilde \Theta_{\widehat \varphi}=0$. Since
$I-\tilde{\Theta}_{\widehat \varphi} \tilde{\Theta}_{\widehat
\varphi}^*=K_{\widehat \varphi}K_{\widehat \varphi}^*$, we have
$K_{\widehat \varphi}K_{\widehat \varphi}^*=I$, which implies that
$K_{\widehat \varphi}$ is a unitary operator.

According to   \cite{Po-automorphism} , $ \varphi=\Phi_U\circ
\Psi_\lambda$, where
$\lambda:=(\lambda_1,\ldots, \lambda)=\varphi^{-1}(0)\in \BB_n$ and  $U$ is  unitary operator  on $\CC^n$. Moreover, we have
$$\Delta_{\hat\Psi_\lambda}^2=\Delta_\lambda \left(I-\sum_{i=1}^n \bar
\lambda S_i\right)^{-1} P_\CC \left(I-\sum_{i=1}^n \lambda_i
S_i^*\right)^{-1} \Delta_\lambda.
$$
Therefore, there is a unitary operator $\Lambda_\lambda:\cD_{{\hat
\Psi_\lambda}}\to \CC$ defined by
\begin{equation*}
\begin{split}
\Lambda_\lambda\Delta_{\hat \Psi_\lambda}
f&:=(1-\|\lambda\|_2^2)^{1/2} P_\CC
\left(I-\sum_{i=1}^n \lambda_i S_i^*\right)^{-1} f\\
&=(1-\|\lambda\|_2^2)^{1/2} f(\lambda)
\end{split}
\end{equation*}
for any $ f\in F^2(H_n)$.  Hence, we deduce that
 $$\Lambda_\lambda
(z\Delta_{\hat \Psi_\lambda} (1))=z(1-\|\lambda\|_2^2)^{1/2}, \qquad
z\in \CC, $$
  and $\cD_{{\hat \Psi_\lambda}}=\CC \Delta_{\hat
\Psi_\lambda} (1)$. Since $\Delta_{\widehat \varphi}=\Delta_{\hat
\Psi_\lambda}$,  we deduce that the operator $W_{\widehat \varphi}:
F^2(\cH_n)\otimes \cD_{\widehat \varphi}\to F^2(H_n)$ defined by
$$
W_{\widehat \varphi}(g\otimes z\Delta_{\widehat \varphi}(1)):=
z(1-\|\lambda\|_2^2)^{1/2}  g,\qquad  g\in F^2(H_n) \text{ and  }
z\in \CC,
$$
is unitary. Consequently, we have
\begin{equation}
\label{W*}
W_{\widehat \varphi}^*(g)=g\otimes
\frac{1}{(1-\|\lambda\|_2^2)^{1/2}}\Delta_{\widehat
\varphi}(1)),\qquad g\in F^2(H_n).
\end{equation}
Setting  $U_\varphi:= W_{\widehat\varphi} K_{\widehat \varphi}$,
relation \eqref{Poiss} implies
\begin{equation*}
\varphi_i(S_1,\ldots, S_n) =U_\varphi^* S_i U_\varphi,\qquad
i=1,\ldots,n,
\end{equation*}
for any  $\varphi\in \text{\rm Aut}(B(\cH)^n_1)$. Hence, if $\psi\in
\text{\rm Aut}(B(\cH)^n_1)$, then
\begin{equation}\label{Poiss2}
(\varphi_i\circ\psi)(S_1,\ldots, S_n) =U_{\varphi\circ \psi}^* S_i
U_{\varphi\circ \psi},\qquad i=1,\ldots,n.
\end{equation}
On the other hand, due to Theorem 3.1 from \cite{Po-automorphism},
the noncommutative Poisson transform satisfies the relation
$P_{\widehat{\varphi\circ \psi}}[\chi]=P_{\widehat \psi} P_{\widehat
\varphi}[\chi]$ for any $\chi\in C^*(S_1,\ldots,S_n)$, the Cuntz-Toeplitz $C^*$-algebra generated by the left creation operators $S_1,\ldots, S_n$. In
particular, when $\chi=S_i$,  we obtain
$$K_{\widehat{\varphi\circ
\psi}}^*(S_i\otimes I_{\cD_{{\widehat{\varphi\circ
\psi}}}})K_{\widehat{\varphi\circ \psi}} =K_{\widehat{ \psi}}^*\left\{[
K_{\widehat{\varphi }}^*(S_i\otimes I_{\cD_{\widehat{\varphi
}}})K_{\widehat{\varphi }}]\otimes I_{\cD_{\widehat{\psi }}}\right\}
K_{\widehat{ \psi}},\qquad i=1,\ldots,n.
$$
Hence, we deduce that
$$
 (\varphi_i\circ\psi)(S_1,\ldots, S_n)= U_{\psi}^* U_\varphi^* S_i U_\varphi U_{\psi},\qquad i=1,\ldots,n.
 $$
Combining this relation with \eqref{Poiss2}, we deduce that
$$
U_{\varphi\circ \psi}^* S_i U_{\varphi\circ \psi}=U_{\psi}^*
U_\varphi^* S_i U_\varphi U_{\psi},\qquad i=1,\ldots,n,
$$
which is equivalent to
$$
U_\varphi U_{\psi}U_{\varphi\circ \psi}^* S_i =  S_i U_\varphi
U_{\psi}U_{\varphi\circ \psi}^*,\qquad i=1,\ldots,n.
$$
Since $S_1,\ldots, S_n$ is irreducible and $U_\varphi
U_{\psi}U_{\varphi\circ \psi}^*$ is a unitary operator, we have
$U_\varphi U_{\psi}U_{\varphi\circ
\psi}^*=\overline{c(\varphi,\psi)} I$ for some complex number with
$|c(\varphi,\psi)|=1$. Hence, we deduce that $
 U_\varphi U_\psi=c(\varphi,\psi) U_{\varphi\circ \psi}
$ for any $\varphi,\psi\in \text{\rm Aut}(B(\cH)^n_1)$.

Now, we prove part (ii).
Let $\varphi^{(p)}:=(\varphi_1^{(p)},\ldots, \varphi_n^{(p)})$, $p=1,2,\ldots$, and
$\psi:=(\psi_1,\ldots, \psi_n)$  be in  $\text{\rm Aut}(B(\cH)^n_1)$ such that $\varphi_i^{(p)}$ converges to $\psi_i$ in the uniform norm on $[B(\cH)^n]_1$, that is,
$$\|\varphi_i^{(p)}-\psi_i\|_\infty:=\sup_{X\in [B(\cH)^n]_1}\|\varphi_i^{(p)}(X)-\psi_i(X)\|\to 0\qquad \text{ as } p\to\infty,
$$
for $i=1,\ldots,n$. Since $\varphi_i^{(p)}$ and  $\psi_i$ are
uniformly continuous  on $[B(\cH)^n]_1$,  the model boundary
functions $\widehat{\varphi_i^{(p)}}$ and $\widehat\psi_i$ are in
the noncommutative disc algebra $\cA_n$ and  we have
$\widehat{\varphi_i^{(p)}}=\varphi_i^{(p)}(S_1,\ldots,S_n)$ and
$\widehat\psi_i=\psi(S_1,\ldots,S_n)$. Consequently, the convergence
above implies that $ \widehat{\varphi_i^{(p)}}\to \widehat\psi_i$ in
the operator norm topology. Each $\varphi^{(p)}\in \text{\rm Aut}(B(\cH)^n_1)$
has the form $ \varphi^{(p)}=\Phi_{U_p} \circ \Psi_{\lambda^{(p)}},
$ where $\Phi_{U_p}$ is an automorphism implemented by a unitary
operator $U_p$ on $\CC^n$, i.e.,
\begin{equation*}
 \Phi_{U_p}(X_1,\ldots,
X_n):=[X_1,\ldots, X_n]U_p , \qquad (X_1,\ldots, X_n)\in [B(\cH)^n]_1,
\end{equation*}
and $\Psi_{\lambda^{(p)}}$ is the involutive free holomorphic   automorphism
associated with $\lambda^{(p)}:=(\varphi^{(p)})^{-1} (0)\in \BB_n$.
Similarly, we have $\psi=\Phi_U\circ \Psi_\mu$, where $U\in B(\CC^n)$   is a unitary operator and $\Psi_\mu$ is the involutive free holomorphic   automorphism
associated with $\mu:=\psi^{-1} (0)\in \BB_n$.
Due to the above-mentioned convergences,  we deduce that
$\varphi^{(p)}(0)\to \psi(0)$ as $p\to\infty$. Taking into account that
$\Psi_{\lambda^{(p)}}(0)=\lambda^{(p)}$ and $\Psi_\mu(0)=\mu$,
we have $\varphi^{(p)}(0)=\lambda^{(p)} U_p$ and $\psi(0)=\mu U$. Therefore,
$\lambda^{(p)} U_p$ converges to $\mu U$ in the operator norm. Since $U_p$ and $U$ are unitary operators, we deduce that
$\|\lambda^{(p)}\|_2\to \|\mu\|_2$ as $p\to\infty$.

Given $\epsilon>0$ and $x=\sum_{\alpha\in \FF_n^+} a_\alpha e_\alpha\in F^2(H_n)$,  let $k\in \NN$ be such that
$\|x-\sum_{\alpha\in \FF_n^+, |\alpha|\leq k} a_\alpha e_\alpha\|<\frac{\epsilon}{4}$.
Using relation  \eqref{W*} and the properties of the noncommutative Poisson kernel, we have
\begin{equation*}
\begin{split}
\sum_{|\alpha|\leq k}a_\alpha U^*_{\varphi^{(p)}} e_\alpha
&=\sum_{|\alpha|\leq k} a_\alpha K^*_{\widehat{\varphi^{(p)}}} W^*_{\widehat{\varphi^{(p)}}} e_\alpha\\
&=\sum_{|\alpha|\leq k} a_\alpha K^*_{\widehat{\varphi^{(p)}}}
\left(e_\alpha\otimes
\frac{1}{(1-\|\lambda^{(p)}\|_2^2)^{1/2}}\Delta_{\widehat
{\varphi^{(p)}}}(1)\right)\\
&=
\sum_{|\alpha|\leq k} a_\alpha
\frac{1}{(1-\|\lambda^{(p)}\|_2^2)^{1/2}} [\widehat{\varphi^{(p)}}]_\alpha\Delta^2_{\widehat
{\varphi^{(p)}}}(1).
\end{split}
\end{equation*}
A similar relation holds if  we replace  $\varphi^{(p)}$ with $\psi$.
Since  $
\widehat{\varphi_i^{(p)}}\to \widehat\psi_i$ in the operator norm topology and
$\|\lambda^{(p)}\|_2\to \|\mu\|_2$ as $p\to\infty$, there is $N\in \NN$ such that
$$
\left\|\sum_{|\alpha|\leq k}a_\alpha U^*_{\varphi^{(p)}} e_\alpha -\sum_{|\alpha|\leq k}a_\alpha U^*_{\psi} e_\alpha\right\|<\frac{\epsilon}{2}
$$
for all $p\geq N$. Using the fact that  $U_{\varphi^{(p)}}$ and $U_\psi$ are unitary operators, we deduce that
\begin{equation*}
\begin{split}
\|U^*_{\varphi^{(p)}} x-U^*_{\psi}x\|
&\leq \left\|U^*_{\varphi^{(p)}} \left(x-\sum_{ |\alpha|\leq k} a_\alpha e_\alpha\right)\right\|
+
\left\|U^*_{\varphi^{(p)}}\left(\sum_{|\alpha|\leq k} a_\alpha e_\alpha\right) -U_\psi^*\left(\sum_{ |\alpha|\leq k} a_\alpha e_\alpha\right)\right\|\\
&\qquad \qquad + \left\|U^*_\psi \left(x-\sum_{ |\alpha|\leq k} a_\alpha e_\alpha\right)\right\|\\
&\leq 2\left\|x-\sum_{ |\alpha|\leq k} a_\alpha e_\alpha\right\| + \left\|U^*_{\varphi^{(p)}}\left(\sum_{|\alpha|\leq k} a_\alpha e_\alpha\right) -U_\psi^*\left(\sum_{ |\alpha|\leq k} a_\alpha e_\alpha\right)\right\|\\
&\leq 2\frac{\epsilon}{4}+\frac{\epsilon}{2}=\epsilon
\end{split}
\end{equation*}
for any $p\geq N$. Therefore  $U^*_{\varphi^{(p)}}$ converges to $U_\psi^*$, as $p\to \infty$, in the strong operator topology.

To prove part (iii), let $\varphi^{(p)}, \varphi$ be in $\text{\rm
Aut}(B(\cH)^n_1)$ be such that $\varphi^{(p)}\to \varphi$ in the metric $d_\cE$, as $p\to \infty$. Then  $\|\varphi^{(p)}- \varphi\|_\infty\to 0$, as $p\to\infty$. Using  (i) and (ii), we deduce   that  the map $\pi: \text{\rm Aut}(B(\cH)^n_1)\to B(F^2(H_n))$ defined by $\pi(\varphi):=U_\varphi$ is
a
projective representation  of the automorphism group $\text{\rm Aut}(B(\cH)^n_1)$.
The proof is complete.
\end{proof}

We say that two projective representations $\pi_1$, $\pi_2$ of
$\text{\rm Aut}(B(\cH)^n_1)$ on the Hilbert spaces  $\cH_1$ and
$\cH_2$, respectively, are  equivalent if there exists a unitary
operator $U:\cH_1\to \cH_2$ and a Borel function $\sigma:\text{\rm
Aut}(B(\cH)^n_1)\to \TT$ such that $\pi_2(\varphi) U=\sigma(\varphi)
U \pi_1(\varphi)$ for all $\varphi\in \text{\rm Aut}(B(\cH)^n_1)$.

We remark  that  if $\pi_1$  and $\pi_2$ are two projective representations of
$\text{\rm Aut}(B(\cH)^n_1$  associated with $T$, as in Theorem \ref{projective}, then we have
$\varphi_i(T)=\pi_1(\varphi)^* T_i \pi_1(\varphi)$  and
$\varphi_i(T)=\pi_2(\varphi)^* T_i \pi_2(\varphi)$
for all
$\varphi\in \text{\rm Aut}(B(\cH)^n_1)
$
and $i=1,\ldots,n$. Hence,  $\pi_1(\varphi)\pi_2(\varphi)^*$ commutes with each operator $T_1,\ldots, T_n$.  Since $[T_1,\ldots, T_n]$ is irreducible, we deduce that
$\pi_1(\varphi)\pi_2(\varphi)^*=d(\varphi) I$ for some constant $d(\varphi)\in \TT$ which proves that $\pi_1$ and $\pi_2$ are  equivalent.

\bigskip

       %

      \end{document}